\documentclass{tran-l}

\usepackage{graphicx}

\usepackage{ivyspackage}

\usepackage{amssymb}
\usepackage{mathtools}

\theoremstyle{plain}
\newtheorem{theorem}{Theorem}[section]
\newtheorem{corollary}[theorem]{Corollary}
\newtheorem{lemma}[theorem]{Lemma}
\newtheorem{proposition}[theorem]{Proposition}
\newtheorem{claim}[theorem]{Claim}

\theoremstyle{definition}
\newtheorem{definition}[theorem]{Definition}

\theoremstyle{remark}
\newtheorem{remark}[theorem]{Remark}

\numberwithin{equation}{section}

\newcommand{\BDOrd}{\calD}

\begin{document}

\title[Rectifiability of the Singular Strata]
{Rectifiability of the Singular Strata for Harmonic Maps to DM-Complexes}

\author{Ivy Stoner}
\address{
Department of Mathematics,
Brown University,
Providence, 02912,
USA
}
\email{ivy\_stoner@brown.edu}

\author{Yitong Sun}
\address{
Department of Mathematics,
Johns Hopkins University,
Baltimore, 21218,
USA
}
\email{ysun143@jh.edu}

\subjclass[2020]{Primary 58E20; Secondary 53C43}

\begin{abstract}
We prove a rectifiability theorem for harmonic maps into DM-complexes. More specifically, if $u$ is a harmonic map from a $n$-dimensional Riemannian domain to an $N$-dimensional NPC DM-complex $Y$, the $k$-th singular stratum $\calS^k(u)$ of the singular set $\calS(u)$ is countably $k$-rectifiable for all $k \in \{0,...,n-2\}.$ Our proof follows the framework in \cite{dm}. This extends the rectifiability result for the singular set of harmonic maps into a $F$-connected complex in \cite{dees} and generalizes the rectifiability theorem with respect the singular strata of harmonic maps into a $F$-connected complex in \cite{bd}.
\end{abstract}

\maketitle
\section{Introduction}
The theory of harmonic maps into singular spaces has played an important role in rigidity problems in geometry and geometric group theory. A fundamental example is the work of Gromov and Schoen \cite{gs} on harmonic maps into locally finite Euclidean buildings, where the regularity result of equivariant harmonic maps provides a geometric proof of $p$-adic superrigidity. See \cite{bdm} for the extension of this result to non-locally finite Euclidean buildings. Since buildings and the corresponding cell complexes are not smooth manifolds, we cannot directly apply the classical Bochner method on the whole domain. Thus, we need the regularity of harmonic maps to guarantee that the Bochner formula method is applicable. 

This strategy was further developed by Daskalopoulos, Mese, and Vdovina \cite{dmv} for locally finite hyperbolic buildings and, more generally, NPC complexes with branching differentiable manifold structure -- DM-complexes. Their rigidity result shows that, under the appropriate conditions on the domain and the representation, an equivariant harmonic map into such a DM-complex is non-branching and totally geodesic. In the case of hyperbolic buildings, this follows a superrigidity statement since the singular set is sufficiently small. The codimension-two estimate allows us to localize the analysis away from the singular set and apply a differential geometric approach on the regular set across the whole domain.

Thus, the size of the singular set is an essential ingredient in the rigidity problem. Once we know that the singular set is of Hausdorff codimension at least 2, it is natural to expect that there exists a rectifiable structure for the singular strata in the quantitative stratification for harmonic maps. Rectifiability is a stronger statement than a Hausdorff dimension estimate: it says that the $k$-th singular stratum is covered by countably many $k$-dimensional Lipschitz images. The goal of this paper is to establish such a rectifiability theorem for harmonic maps into locally finite NPC DM-complexes.

Let $u:\Omega \to Y$ be a harmonic map from a Riemannian domain into an $N$-dimensional NPC DM-complex $Y$. The singular set $\calS(u)$ is the set of points $x$ such that no neighborhood of $x$ is mapped by $u$ into a single differentiable manifold of $Y.$ Following the splitting structure of DM-complexes in \cite{dm}, we can locally decompose the singular set according to the maximal Euclidean factor. In particular, near a point $x$ where $u$ splits with an optimal $\RR^j$-factor, we write
\[
u=(V,v):B_{\sigma}(x) \to \RR^j \times Y^{N-j},
\]
where $Y^{N-j}$ is a lower dimensional $F$-connected complex. The singular stratum $\calS^k_j(u)$ associated with optimal $\RR^j$ splitting then consists of points $x$ where the singular component $v$ has no $(k+1)$-homogeneous tangent map near $x.$ We prove that the $k$-th singular stratum
\[
\calS^k(u):=\bigcup_{j=0}^{N-1}\calS^k_j(u)
\]
is countably $k$-rectifiable for every $k=0,...,n-2$.
\begin{theorem}\label{thm:mainrect}
    If $u:\Omega\subseteq\RR^n\to X$ is a harmonic map into an NPC DM-complex of dimension $N$, then $\calS^k(u)$ is countably $k$-rectifiable for all $k\in \{0,1,\ldots,n-2\}$.
\end{theorem}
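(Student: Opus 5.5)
The plan is to reduce the statement to the $F$-connected case, where rectifiability of the singular strata is known from \cite{bd}, by using the local splitting structure of DM-complexes from \cite{dm}. Since countable unions of countably $k$-rectifiable sets are countably $k$-rectifiable, and $\calS^k(u)=\bigcup_{j=0}^{N-1}\calS^k_j(u)$ is a finite union, it suffices to show that each $\calS^k_j(u)$ is countably $k$-rectifiable. Moreover, $\Omega$ is second countable, so it suffices to prove the following local statement: every $x\in\calS^k_j(u)$ has a neighborhood $B_{\sigma}(x)$ such that $\calS^k_j(u)\cap B_{\sigma/2}(x)$ is countably $k$-rectifiable. By Lindel\"of, countably many such balls then cover $\calS^k_j(u)$.

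Fix $x$ at which $u$ has optimal $\RR^j$-splitting. By the splitting theorem of \cite{dm}, there is a ball on which $u=(V,v):B_\sigma(x)\to\RR^j\times Y^{N-j}$, where $V$ is a smooth harmonic map into Euclidean space and $v$ is harmonic into the $F$-connected complex $Y^{N-j}$. The key identification is that, inside $B_\sigma(x)$, the set $\calS^k_j(u)$ is contained in the $k$-th singular stratum of $v$ in the sense of \cite{bd}. First, singular points of $u$ are singular points of $v$, because $V$ is smooth and the product of a manifold chart of $Y^{N-j}$ with $\RR^j$ is a manifold chart of the DM-complex. Second, the stratum conditions coincide, since by definition $\calS^k_j(u)$ is phrased through the absence of $(k+1)$-homogeneous tangent maps of $v$. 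This requires checking that the tangent-map and order (Almgren frequency) theory for $v$ used in \cite{bd} applies to the component $v$: in particular, that $v$ is harmonic into an $F$-connected complex with the local finiteness and NPC hypotheses, and that the domain metric can be handled via normal coordinates on $\Omega\subseteq\RR^n$. Applying the rectifiability theorem of \cite{bd} to $v$ then gives countable $k$-rectifiability of $\calS^k_j(u)\cap B_{\sigma/2}(x)$.

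The main obstacle is this identification step. Optimal splitting need not be stable across the ball, since nearby points may admit a larger Euclidean factor. One must therefore ensure that the stratification of $v$ on $B_\sigma(x)$ indeed contains every point that the paper assigns to $\calS^k_j(u)$, rather than points counted under a different $j'$ with a different splitting. I would first try to handle this by working with the fixed decomposition on $B_\sigma(x)$ and noting that each point of $\calS^k_j(u)$ there is singular for $v$ with no $(k+1)$-symmetric tangent map; membership in the \cite{bd} stratum then follows without needing optimality at nearby points. If \cite{bd} only covers the singular set rather than its strata in the needed form, the fallback is to run the Naber--Valtorta framework directly, as in \cite{dm}. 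That argument combines monotonicity of the frequency of $v$, quantitative cone-splitting for tangent maps into $F$-connected complexes, and an $L^2$-best-approximation estimate, and feeds these into the discrete Reifenberg theorem. This gives packing estimates for the quantitative strata and hence rectifiability.
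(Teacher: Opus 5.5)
Your reduction matches the paper's final argument: the finite union over $j$, the Lindel\"of cover, and the inclusion of $\calS^k_j(u)$ near $x$ into the stratum $\calS^k_0(v_x)$ of the singular component of one fixed splitting. The paper also settles your worry about unstable optimal splitting essentially as you suggest. For $y\in\calS^k_j(u)\cap B_{R(x)}(x)$ the restricted splitting has radius $\sigma_j(x)-|x-y|\le\sigma_j(y)$, so it is one of the representations quantified over in the definition of $\calS^k_j(u)$.

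The genuine gap is the step ``$v$ is harmonic into the $F$-connected complex $Y^{N-j}$, so apply \cite{bd}.'' The local representation of \cite{dm} does not give this. The metric $d_G$ on $\RR^j\times Y^{N-j}$ is only asymptotically isometric, as $v\to P_0$, to the product metric $G_0=H(V)\oplus h(v)$. So in general neither $V$ nor $v$ is harmonic, and the paper says so explicitly. What $v$ does satisfy are the perturbed energy and height inequalities of \cite[Propositions 37, 53]{dm}, with lower-order error terms such as $C(E^v+I^v)$, together with a closeness estimate to its harmonic replacement \cite[Proposition 48]{dm}. That $v$ has homogeneous harmonic tangent maps makes $\calS^k_0(v)$ well defined. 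But rectifiability needs quantitative control at all scales and all nearby centers, and the results of \cite{bd} are proved only for genuinely harmonic maps. So the theorem of \cite{bd} cannot be invoked for $v$; this is exactly the step the whole paper is written to replace.

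Your fallback does not close the gap, because it still presupposes exact monotonicity of the frequency of $v$. What is actually needed, and what the paper builds, is the following:
\begin{itemize}
\item an almost-monotone smoothed frequency $\Ord_{\phi,v}(x,r)=e^{Ar}(\BDOrd_{\phi,v}(x,r)+Ar^2)$ whose error terms absorb the non-product metric (Lemma \ref{monotoneorder});
\item a comparison of frequencies at different centers (Lemma \ref{comparefreq}), obtained from a domain-variation inequality for the non-harmonic $v$ (Lemma \ref{domainvariation}) and carrying an extra $O(r)$ error;
\item a compactness statement (Lemma \ref{compactness}) showing that rescalings of $v$ converge, via harmonic replacements, to genuinely harmonic maps into the $F$-connected cone, so that the rigidity lemmas of \cite{bd} can be used \emph{only in the limit} in the contradiction arguments of Sections \ref{sec:k-hom}--\ref{sec:meanflatness};
\item a mean-flatness bound with an additional $+\sigma$ term, which is still integrable against $ds/s$, so that the criterion of \cite{at} applies.
\end{itemize}
Some version of these ingredients is required before your reduction yields the theorem.
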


The paper includes both local and global results. The local theorems concern the singular component $v:B_{\sigma}(x) \to Y^{N-j}.$ We first establish a covering theorem to give Minkowski-type control on the quantitative $k$-th singular stratum of $v.$ Then we combine the covering arguments with a mean-flatness estimate, showing the $k$-rectifiability holds for both the quantitative $k$-th singular stratum of $v$ and the $k$-th singular stratum $\calS_j^k(u)$ with respect to the optimal splitting factor $\RR^j$.  Technically, the proof of local rectifiability follows from the covering arguments first developed by \cite{nv1} and \cite{nv2} for harmonic and approximate harmonic maps. In addition, the proof applies some techniques refined by \cite{dmsv} for Q-valued functions. The global theorem is then obtained by reducing the $k$-th singular set of $u$ to these local singular strata $\calS_j^k(u)$. By using a countable cover of $\calS^k(u)$ and the local rectifiability result to the corresponding singular strata, we have the global rectifiability theorem.

One essential difficulty is the splitting problem in the hyperbolic setting. In the case of the $F$-connected complex in \cite{bd}, there is a natural local splitting of the complex. However, following the metric estimate in \cite{dm}, we only have the asymptotical product structure on DM-complexes. In particular, in a local splitting $u=(V,v),$ the components $V,v$ are not necessarily harmonic since the metric on the target is only asymptotically isometric to a product metric, from the metric estimate in \cite{dm}. Another difficulty is that we need to compare frequency quantities centered at different nearby points in the quantitative stratification argument, not only varying the scale at one fixed point. This type of estimate is needed in Lemma \ref{comparefreq} and later in the mean-flatness argument. 

Due to these two difficulties, the classical order is not strong enough for the quantitative estimate needed in the proof. For this reason, in Section \ref{section:smoothedorder}we introduce a smoothed order $\Ord_{\phi,v}$ with respect to the singular component $v$. The concept of smoothed order started in \cite{dmsv} for Q-valued harmonic functions and was then adapted by \cite{dees} to harmonic maps into $F$-connected complexes. We have a lot of new work here to modify this smoothed order to be applicable for our asymptotically harmonic map $v$. This smoothing allows us to work with weighted interior integrals instead of sharp boundary integrals, which enables us to compare frequency quantities when the center point varies. The error term in the smoothed order absorbs the fact that $v$ is only asymptotically harmonic. Thus the smoothed order provides the frequency pinching estimates needed to prove Lemma \ref{comparefreq} and the mean-flatness bounds used for rectifiability.

This rectifiability result gives a finer description of the singular set than the previous codimension estimate needed for rigidity. The previous dimension bound shows that the singular set is small and the rectifiability result shows that it is geometrically organized that each $k$-stratum is contained in countably many $k$-dimensional Lipschitz submanifolds. Thus the singular set is not only small in measure, but also has an approximate $k$-dimensional structure. This is particularly significant for DM-complexes, where the singularities are from the branching of the target. Establishing rectifiability in the setting of DM-complexes shows that these branching singularities have a controllable geometric structure. Since locally finite hyperbolic buildings are one typical example of DM-complexes and motivate the rigidity results of \cite{dmv}, our theorem provides the corresponding rectifiability result for the singular strata appearing in the hyperbolic building rigidity.

\subsection{Outline of this paper}
\begin{itemize}
    \item In Section \ref{sec:preliminaries}, we collect the definitions and background results needed throughout the paper. 
    \item In Section \ref{sec:srata}, we introduce concepts including the singular strata, almost homogeneity, and quantitative strata in the DM-complex setting for following quantitative arguments. 
    \item In Section \ref{section:smoothedorder}, we develop the smoothed order for the singular component $v$ arising from a local splitting of $u=(V,v)$ and corresponding frequency pinching. We also prove related results needed for mean-flatness arguments. 
    \item In Section \ref{sec:k-hom}, we prove the $k$-homogeneity and pinching results needed for the quantitative stratification. We show that in certain balls, the singular set lies close to an affine $k$-space.
    \item In Section \ref{sec:inhomogeneities}, we prove the inhomogeneity translating estimates. These lemmas state that the non-$(k+1)$-homogeneity can be transferred between nearby points when the relevant frequency pinching is small.
    \item In Section \ref{sec:meanflatness}, we relate the smoothed frequency pinching to the mean flatness, which is an important tool in the covering arguments. We prove that if the singular component is not approximately $(k+1)$-homogeneous, we can control the mean flatness.
    \item In Section \ref{sec:mainresults}, we state the main results of the paper including local versions and global versions. All of these results are for singular points with no splitting. We also give the proof of Theorem \ref{thm:mainrect}.
    \item In Appendix, we provide the covering arguments used in the proof of the local estimates.
\end{itemize}

\section{Preliminaries}\label{sec:preliminaries}
We recall the notions of NPC spaces, DM-complexes, and harmonic maps into NPC targets. We also discuss blow-up maps and tangent maps for harmonic maps into DM-complexes, and introduce the relevant concepts of homogeneity. Finally, we define the smoothed height and energy that will be used to define the smoothed order later.
\subsection{NPC spaces}
A \textbf{non-positively curved (NPC) space} $(X,d)$ is a complete metric space satisfying following two conditions:

(i) It is a length space. For any two points $P,Q$ in $X,$ there exists a distance-minimizing geodesic connecting $P$ and $Q.$ 

(ii) The triangle comparison inequality holds. In particular, let points $P,Q,R\in X$, and choose geodesics $\gamma_{P,Q}$, $\gamma_{Q,R}$, and $\gamma_{R,P}$ joining the corresponding pairs of points. Denote their lengths by
\[
d(P,Q)=r,\qquad d(Q,R)=p,\qquad d(R,P)=q.
\]
For any $0\leq\lambda\leq1$, let $Q_\lambda$ be the point on the geodesic from $Q$ to $R$ which lies a fraction $\lambda$ of the way from $Q$ to $R$. Then
\[
d(Q_\lambda,Q)=\lambda p,\qquad d(Q_\lambda,R)=(1-\lambda)p.
\]
Consider the Euclidean comparison triangle with side lengths $p,q,r$, and denote its vertices by $\overline P,\overline Q,\overline R$, corresponding respectively to $P,Q,R$. Let
\[
\overline Q_\lambda=\overline Q+\lambda(\overline R-\overline Q)
\]
be the corresponding point on the Euclidean segment from $\overline Q$ to $\overline R$. The NPC condition requires that the distance from $P$ to $Q_\lambda$ in $X$ is no larger than the corresponding Euclidean distance from $\overline P$ to $\overline Q_\lambda$. Equivalently,
\[
d^2(P,Q_\lambda)
\leq
(1-\lambda)d^2(P,Q)
+\lambda d^2(P,R)
-\lambda(1-\lambda)d^2(Q,R).
\]
Intuitively, this means the triangle in NPC space is no thicker than the triangle in Euclidean space.

\subsection{DM-complexes and conical complexes} We define DM-complexes as in \cite[Section 2]{dm}.
\begin{definition}[Cell complex]
Let ${\bf E}^d$ be an affine space. A {\bf cell} is a convex piecewise linear
polyhedron $S\subset {\bf E}^d$ whose relative interior lies in some affine subspace
${\bf E}^i\subset {\bf E}^d$. We write $S^m$ for a cell of dimension $m$. A {\bf convex cell complex} $Y$ in ${\bf E}^d$ is a finite collection
$\mathcal F=\{S\}$ of cells such that:
\begin{enumerate}
    \item for each $S^m\in\mathcal F$, its boundary $\partial S^m$ is a union
    of $F^j\in\mathcal F$ with $j<m$, where $\{F^j\}$ are called the faces of $S^m$.
    \item if $F^j,S^m\in\mathcal F$ with $j<m$, and $F^j\cap S^m\neq\varnothing$,
    then $F^j\subset S^m$.
\end{enumerate}    
\end{definition}
One typical example of cell complexes is a simplicial complex whose cells are simplices.
\begin{definition}[Riemannian complex]
A {\bf Riemannian complex} is a cell complex $Y$ equipped with a collection of metrics $G=\{G^S\}$ such that 
\begin{enumerate}
    \item each cell $S$ of $Y$ is endowed with a smooth metric $G^S.$ 
    \item the metric $G^S$ is smooth in the interior of $S$ and extends smoothly to $\partial S.$
\end{enumerate}
Moreover, if $F$ is a face of $S$, then
\[
G^S|_{F}=G^{F}.
\]
\end{definition}

We assume that all cell complexes considered below have bounded cells. If unbounded cells are allowed, we will write {\bf unbounded cell complexes} explicitly. We also assume that
all cell complexes $Y$ are locally compact, Riemannian, and NPC with respect to the distance induced by $G$.

\begin{definition}[DM-complex]
Let $(Y,G)$ be a $k$-dimensional Riemannian complex. We say that $Y$ is endowed with a {\bf branching differentiable manifold structure}, or $Y$ is a {\bf DM-complex}, if for any two cells $S_1,S_2\subset Y$ with
$S_1\cap S_2\neq\varnothing$, there exists a complete $k$-dimensional smooth Riemannian manifold $M$ and an isometric totally geodesic embedding $i:M\to Y$ such that
\[
S_1\cup S_2\subset i(M).
\]
For simplicity, we identify $M$ with its image $i(M)$ and denote it as a {\bf DM}.
\end{definition}
If each DM in a DM-complex $(Y,G)$ is isometric to $n$- dimensional Euclidean space, then this DM-complex $(Y,G)$ is called an {\bf $F$-connected complex}. The NPC condition implies that the intersection of any two DM's in a DM-complex is totally geodesic in each of them. Moreover, instead of working with general DM-complexes, we will focus on a class of simpler complexes, which is called the {\bf conical complexes}.
\begin{definition}[Conical complex]
Let $(X,d)$ be an NPC space and let $O_X \in X$. For $\mu \in [0,1],$ define the mapping
\[
R_{O_X,\mu}:X \to X
\]
by letting $R_{O_X,\mu}(P)$ be the point lying a fraction $\mu$ of the way from $O_X$ to $P \in X$ along the unique geodesic joining them. We say that $X$ is {\bf conical with respect to $O_X$} if 
\begin{enumerate}
    \item for any $P,Q \in X$ and $\mu \in [0,1]$
            \[
                d\bigl(R_{O_X,\mu}(P), R_{O_X,\mu}(Q)\bigr) = \mu d(P,Q).
            \]
    \item $R_{O_X,\mu}$ is surjective for any $\mu \in (0,1].$ 
\end{enumerate}
The point $O_X$ is called the {\bf cone point} of $X.$
\end{definition}
\subsection{Harmonic maps into NPC spaces}
For a map $u: (\Omega,g) \to (X,d)$ where $\Omega$ is an $n$-dimensional Riemannian domain and $X$ is a metric space, the $\epsilon$-energy density function is defined in \cite[Section 1.2]{ks1} as
\[
e_{\epsilon}(x) = 
\begin{cases} 
\displaystyle\int_{y \in \partial B_{\epsilon}(x)} \frac{d^2(u(x), u(y))}{\epsilon^2} \frac{d\sigma}{\epsilon^{n-1}} & x \in \Omega_{\epsilon} \\ 
\mathmakebox[\widthof{$\displaystyle\int\limits_{y \in \partial B_{\epsilon}(x)} \frac{d^2(u(x), u(y))}{\epsilon^2} \frac{d\sigma}{\epsilon^{n-1}}$}][c]{0} &\text{otherwise}
\end{cases}
\]
where $d \sigma$ here is $(n-1)$- dimensional surface measure and $\Omega_{\epsilon}:=\{x \in \Omega: \dist(x,\partial \Omega) \geq \epsilon\}.$
Say $u$ has finite (unsmoothed) energy if 
\[ E^{u}:=\sup_{\varphi \in C_c(M), 0 \leq \varphi \leq 1} \limsup_{\epsilon \to 0} \int_{\Omega} \varphi e_{\epsilon} \, dvol_g < \infty.\]
From the result in \cite[Section 1.5]{ks1}, we know that as $\epsilon \to 0, \, e_{\epsilon}(x) \, dvol_g$ converges weakly to a Sobolev energy density measure $|du|^2(x) dvol_g$. This defines the energy formula in $\Omega$:
\[ E^{u}[\Omega]:=\int_{\Omega} |du|^2 dvol_g.\]
We say a continuous map $u:\Omega \to X$ is {\bf harmonic} if it is locally energy minimizing, i.e. for any $p \in \Omega,$ there exists $r>0$ such that the restriction map $u |_{B_r(p)}$ is the energy minimizer among all admissible maps in the space $W^{1,2}_u(B_r(p),X):=\{h \in W^{1,2}(B_r(p),X):d(u,h) \in W_0^{1,2}(B_r(p))\}$ (cf. \cite[Section 2.2]{ks1}). 

When $X$ is NPC, a nonconstant harmonic map $u: \Omega \rightarrow (X,d)$ has the following important monotonicity formula. Given  $x_0 \in \Omega$ and $r>0$ such that $B_{r}(x_0) \subset \Omega$, let the (unsmoothed) height and energy functions be defined as
\begin{eqnarray*} 
I^u(x_0,r) := \int_{\partial B_{r}(x_0)} d^2(u(x),u(x_0)) d\Sigma, \ E^u(x_0,r):=E^u[B_r(x_0)].
\end{eqnarray*}
When the point $x_0$ is fixed, we will sometimes use the notation $I^u(r):=E^u(x_0,r)$ and $E^u(r):=E^u(x_0,r)$. There exists a constant $c>0$ depending only on the $C^2$ norm of the domain metric $g$ (with $c=0$ when $g$ is the standard Euclidean metric) such that
\begin{equation}
r \mapsto e^{cr^2} \frac{r \ E^u(r) }{I^u(r)}, \ \ \  \ \ r \mapsto e^{cr^2}\frac{I^u(r) }{r^{n+1}} \ \ \ 
 \end{equation}
are non-decreasing.  
Recall that $I^u(r)>0$ for any $r>0,$ which follows from the fact that $d^2(u(x),u(x_0))$ is subharmonic (see \cite[Proposition 2.2]{gs} and the mean value inequality for subharmonic functions. As a non-increasing limit of continuous functions,
\[
\Ord^u(x_0):=\lim_{r \rightarrow 0}e^{cr^2} \frac{r \ E^u(r) }{I^u(r)}
\]
 is an upper semicontinuous function.  The value $\Ord^u(x_0)$ is called the (unsmoothed) order of $u$ at $x_0.$ In the following sections, we focus on the (unsmoothed) energy $E^v$, (unsmoothed) height $I^v$, and the (unsmoothed) order $\Ord^v$ with respect to the the asymptotically harmonic map $v.$

 \subsection{Blow-up maps and tangent maps}
Given a harmonic map $u:\Omega \to (Y,d_Y)$ into a DM-complex, let $x_0 \in \Omega$ with $\alpha:=\Ord^u(x_0)$ and $r_0>0$ such that $B_{r_0}(x_0) \subset \Omega$ and identify $(B_{r_0}(x_0),g)$ with the Euclidean ball $B_{r_0}(0) \subset \RR^n$ via normal coordinates centered at $x_0=0$. Let $u:(B_{r_0}(0),g) \to (Y,d_Y)$ be the restriction of $u$. We construct the {\bf blow-up map} $u_{\sigma}$ of $u$ at $x_0$ in the Gromov-Schoen sense: Define a function $\lambda^u:(0,r_0] \to (0,\infty)$ by 
\[
\lambda^u(\sigma)=\lambda_{\sigma}:=\left(\sigma^{1-n} I^u(0,\sigma)\right)^{\frac{1}{2}}.
\]
For $\sigma \in(0,r_0],$ the blow-up map of $u$ at $x_0$ is given by
\[
u_{\sigma}:(B_{\sigma^{-1}r_0}(0),g_{\sigma}) \to (Y, \lambda_{\sigma}^{-1}d_Y), \ u_{\sigma}(x)=u(\sigma x),
\]
where $g_{\sigma}(x):=g(\sigma x).$ Equivalently, we may define $u_{\sigma}$ on $B_1(0).$ 

The map $u_{\sigma}$ is also energy minimizing and satisfies $\Ord^{u_{\sigma}}(x,r)=\Ord^{u}(\sigma x,\sigma r).$ By construction, 
\[I^{u_{\sigma}}(1):=\int_{\partial B_1(0)} d_{\sigma}^2(u_{\sigma},u_{\sigma}(0)) \, d\Sigma=1.\]
By direct computation, for $\sigma>0$ sufficiently small
\[E^{u_{\sigma}}[B_1(0)] \leq 2 \Ord^{u_{\sigma}}(x_0)=2\Ord^{u}(x_0)=2\alpha,\]
i.e. $u_{\sigma}$ has uniformly bounded energy on $B_1(0).$ By \cite[Theorem 2.4.6]{ks1}, $u_{\sigma}$ is uniformly Lipschitz in any compact subset of $B_1(0).$ By \cite[Section 3]{gs}, for any sequence $\{sigma_i\}_{i=1}^{\infty}$ with $\sigma_i\to 0$ as $i\to \infty$, there exists an (unrelabeled) subsequence $\{u_i:=u_{\sigma_i}\}$ of blow-up maps converging uniformly to a harmonic map $u_*:B_1(0) \to (T_{u(0)}Y,d),$ where $T_{u(0)}Y$ is the tangent cone of $Y$ over $u(0).$ We call $u_*$ a {\bf tangent map to $u$ at $x_0$}. Furthermore, following \cite[Proposition 3.3]{gs}, $u_*$ is a non-constant intrinsically homogeneous map of order $\alpha,$ i.e. $u_*$ maps every ray from the origin in $B_1(0)$ onto a geodesic in $T_{u(0)Y}$ such that $d(u_*(tx),u_*(0))=t^{\alpha}d(u_*(x),u_*(0))$ for $t \geq 0.$

\subsection{Homogeneity}
When the target $X_C$ is actually a conical $F$-connected complex, we can define homogeneity extrinsically as in \cite[Section 2.5]{bd} by scaling with respect to the cone point.
\begin{definition}
    Let $X_C$ be a conical $F$-connected complex with cone point $O_X.$ A map $h:\RR^n \to X_C$ is {\bf homogeneous of degree $\alpha$ about $x_0$} if $h(x_0)=O_X$ and $h(x_0+tz)=t^{\alpha}h(x_0+z)$ holds for any $z \in \RR^n$ and $t>0.$
\end{definition}
\begin{definition}
We say a homogeneous degree $\alpha$ map $h:\RR^n \to X_C$ is {\bf $k$-homogeneous about the point $x_0$} if it is homogeneous about the point $x_0$ and there is a $k$-dimensional vector subspace $V \subseteq \RR^n$ such that $h(x+v)=h(x)$ for any $x \in \RR^n$ and $v \in V.$    
\end{definition}

\subsection{Smoothed height and energy}
In our case where the target space is a DM-complex, we define the {\bf smoothed order} for technical reasons. We will discuss this in detail in Section \ref{section:smoothedorder}. Here we first give the smoothed height and the smooth energy. Fix once for all $\phi:\mathbb{R}_{\geq 0} \to \mathbb{R}$ such that $\phi \equiv 1$ on $[0,1/2], \phi(x)=2-2x$ on $[1/2,1],$ and $\phi\equiv0$ otherwise. Then, for a map $u\in W^{1,2}(\Omega\subset\RR^n,Y_C^N)$, we define the smoothed height and the smooth energy as below:
\begin{align*}
E_{\phi,u}(x,r) &:=\int_{\RR^n} |\nabla u(y)|^2 \phi\left(\frac{|y-x|}{r}\right) \, dy, \\
 I_{\phi,u}(x,r)&:= -\int_{\RR^n} d^2(u(y),O_Y) |y-x|^{-1} \phi'\left(\frac{|y-x|}{r}\right) \, dy.
\end{align*}

\section{Splitting and singular strata}\label{sec:strata}
We begin by introducing the regular and singular sets, where the regular set consists of points whose image is contained in a single differentiable manifold of the target. We then define what it means for the ``splitting'' of a map $u$ at a point $x$ with an $\RR^j$-factor and introduce the optimal splitting data at a point. Using this splitting structure, we define the stratum $\calS^k_j(u)$ with respect to $\RR^j$-factor and the $k$-th singular stratum $\calS^k(u).$ This section ends with the definition of the quantitative strata.
\subsection{Local splitting and Metric estimates}
Let $w: \Omega \to (Y,G)$ be a map from a Riemannian domain into an $N$-dimensional DM-complex. We define the regular set $\calR(w)$ by a collection consisting of points $x \in \Omega$ such that there exists a ball $B_r(x)\subset \Omega$ and a differentiable manifold (DM) $M$ such that $w(B_r(x))\subset M$, and define the singular set by 
\[\calS(w):=\Omega\backslash \calR(w).\]
Fix $x_0 \in \Omega.$ We say that $w$ \textbf{splits on} $B_{\sigma}(x_0)\subset \Omega$ \textbf{with} $\RR^j$ \textbf{factor} if the restriction $w|_{B_{\sigma}(x_0)}$ admits a local representation in the sense of \cite[(16)--(17)]{dm},
\[
w=(V,v):B_{\sigma}(x_0) \longrightarrow \bigl( \RR^j \times Y^{N-j},d_G \bigr),
\]
where $Y^{N-j}$ is a lower dimensional NPC complex, $V(x_0)=0,$ and $v(x_0)=P_0 \in Y^{N-j}$. We call \[
(\sigma,V,v,j,Y^{N-j}) 
\]
the {\bf splitting data for $w$ at $x_0.$} In particular, the map $v$ is called the singular component of $w$.

The local model $\RR^j \times Y^{N-j}$ describes the product structure, but the metric $d_G$ is not necessarily a product metric. Let $F \subset Y^{N-j}$ be an $(N-j)$-dimensional flat of the local model containing $P_0.$ Then on $\RR^j \times F,$ we have
\[
(V,v)=(V^1,...,V^j,v^{j+1},...,v^N).
\]
Let $H(V):=G(V,0)$ on $\RR^j$ and let $h$ denote the metric on $F.$ Define the product metric 
\[
G_0(V,v):=H(V) \oplus h(v).
\]
Following \cite[Lemma 22]{dm} and \cite[Assumption 2]{dm}, in subsequent sections we assume that {\bf $G$ is asymptotically isometric to $G_0$ } as $v \to P_0.$

\subsection{Singular strata}
For $x\in \Omega$ and $j\in \{0,1,\ldots,N\}$, let \[\sigma_j(x):=\sup\{\sigma\geq 0: w \text{ splits  on }  B_{\sigma}(x) \text{ with } \RR^j \text{ factor}\}\] and let\[J(x):=\max\{j\in\{0,\ldots,N\}: \sigma_j(x)>0\}.\] 
    Thus, there exists $\sigma_{J(x)} >0$ such that \[
    w |_{B_{\sigma_{J(x)}}(x)}=(V,v):B_{\sigma_{J(x)}}(x) \to \RR^{J(x)} \times Y^{N-J(x)}.
    \]
    \begin{definition}
    We call $(\sigma_{J(x)},V,v,J(x),Y^{N-J(x)})$ {\bf optimal splitting data for $w$ at $x$}, and we say that $w$ {\bf splits at $x$ with optimal $\RR^{J(x)}$ factor}. 
    \end{definition}
    \begin{definition}
        For $w :\Omega \to X$ and $j \in \{0,...,N-1\},$ we define
         \[F_j(w):=\{x\in \calS(w): w \text{ splits at } x  \text{ with optimal } \RR^j \text{ factor with } j=J(x)\}.\]
    \end{definition}

    \begin{definition}
    Let $u=(V,v):B_{\sigma}(x)\to (\RR^j\times Y_C^{N-j},d_G)$ be a local representation as in \cite[(16)]{dm} of a harmonic map from Riemannian domain $\Omega \in \RR^n$ into a  $N$-dimensional conical DM-complex $Y_C.$ For $k \in \{0,...,n-2\},$ we define
   \begin{align*}
        \calS_0^k(v):=\{x\in F_0(v): \text{no tangent map to } v \text{ at } x \text{ is } (k+1)- \text{homogeneous}\}.
    \end{align*}
    For $j \in \{0,...,N\},$ we define
    \begin{align*}
        \calS_j^k(u):=\{x\in F_j(u): \text{for all local  representations } \ u=(V,v):B_{\sigma}(x)\to (\RR^j\times Y_C^{N-j},d_G)  \\  \text{ as  in \cite[(16)]{dm} with } \sigma\in (0,\sigma_j(x)], \ x\in \calS_0^k(v)\}.
    \end{align*}
    Finally, we define
    \begin{align*}
        \calS^k(u):=\bigcup_{j=0}^{N-1}\calS_j^k(u).
    \end{align*}
    \end{definition}
    \subsection{Almost homogeneity and quantitative singular strata} To prove the stratification arguments in the following sections, we need to introduce these concepts.
    \begin{definition}
    We say that a map $v:B_{\sigma}(x)\to Y_C$ from $B_{\sigma}(x)\subset \RR^n$ to a conical $F$-connected complex $Y_C$ is {\bf $(\eta,r,k)$-homogeneous at $y$} if for some $k$-homogeneous Lipschitz map $h:\RR^n\to Y_C$ (cf. \cite[Definition 2.8]{bd}), we have
    \begin{align*}
        \sup_{B_r(x)}d(v,h^x)\leq \eta\paren{r^{1-n}I_{\phi,v}(x,r)}^{1/2},
    \end{align*}
    where $h^x(y):=h(y-x)$ is defined to center the homogeneity of $h$ at $x$. 
    \end{definition}
    \begin{definition}
    For $\eta>0, r \in (0,1),$ we define
    \begin{align*}
        \calS_{0,\eta,r}^k(v)=\{x\in \calS_0^k(v): v \text{ is  not } (\eta,s,k+1)-\text{homogeneous at } x \text{ for  any } s \in [r,1]\}
    \end{align*}
    \begin{align*}
        \calS_{0,\eta}^k(v)=\{x\in \calS_0^k(v): v \text{ is not } (\eta,s,k+1)-\text{homogeneous at } x \text{ for any } s\in (0,1]\}
    \end{align*}
    \end{definition}
    Since the proof of the following lemma uses the blow-up estimates for the smoothed height developed in Section \ref{section:smoothedorder}, we will defer its proof to Section \ref{section:smoothedorder}.
    \begin{lemma}\label{bd3.7}
        $\calS_0^k(v)=\cup_{\eta>0}\calS^k_{0,\eta}(v)=\cup_{\eta>0}\cap_{r>0}\calS^k_{0,\eta,r}(v)$.
    \end{lemma}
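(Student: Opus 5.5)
The plan is to prove the two equalities by showing a chain of inclusions
\[
\calS_0^k(v)\subseteq\bigcup_{\eta>0}\calS^k_{0,\eta}(v)\subseteq\bigcup_{\eta>0}\bigcap_{r>0}\calS^k_{0,\eta,r}(v)\subseteq\calS_0^k(v).
\]
The last inclusion is trivial, since every $\calS^k_{0,\eta,r}(v)$ is by definition a subset of $\calS_0^k(v)$. For the middle inclusion, fix $x\in\calS^k_{0,\eta}(v)$. Then $v$ is not $(\eta,s,k+1)$-homogeneous at $x$ for any $s\in(0,1]$. In particular this holds for every $s\in[r,1]$ and every $r>0$, so $x\in\bigcap_{r>0}\calS^k_{0,\eta,r}(v)$. (Conversely, the intersection over $r$ of the sets $\calS^k_{0,\eta,r}$ is exactly $\calS^k_{0,\eta}$, because $\bigcup_r[r,1]=(0,1]$.) Hence the middle inclusion is in fact an equality. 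The content of the lemma is therefore the first inclusion.

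I will prove the first inclusion by contradiction, using a compactness argument. Suppose $x\in\calS_0^k(v)$ but $x\notin\calS^k_{0,\eta}(v)$ for every $\eta>0$. Taking $\eta_i=1/i$, we get scales $s_i\in(0,1]$ and $(k+1)$-homogeneous Lipschitz maps $h_i$ such that
\[
\sup_{B_{s_i}(x)}d(v,h_i^x)\le \tfrac1i\bigl(s_i^{1-n}I_{\phi,v}(x,s_i)\bigr)^{1/2}.
\]
There are two cases. In the first, $s_i\not\to0$, so after passing to a subsequence $s_i\to s_*>0$. Then $v$ agrees on $B_{s_*}(x)$ with a limit of the $h_i$, which is itself $(k+1)$-homogeneous; by unique continuation for the order, its tangent map at $x$ is then $(k+1)$-homogeneous. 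The main case is the second, $s_i\to0$. Here I rescale using the blow-up normalization $v_i(y)=v(x+s_iy)$, with the target metric divided by $\mu_i:=(s_i^{1-n}I_{\phi,v}(x,s_i))^{1/2}$ rather than by $\lambda_{s_i}$. After this rescaling the hypothesis reads $\sup_{B_1}d_i(v_i,\tilde h_i)\le 1/i$, where $\tilde h_i$ is the rescaled $h_i$. Rescaling a $(k+1)$-homogeneous map about the cone point keeps it $(k+1)$-homogeneous, since the target is conical.

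This is where the blow-up estimates for the smoothed height from Section \ref{section:smoothedorder} are needed. The key comparison is
\[
C^{-1}\le \frac{I_{\phi,v}(x,s)}{I^v(x,s)}\le C \quad\text{for all small } s.
\]
The smoothed height averages $I^v(x,\rho)$ over $\rho\in[s/2,s]$ with weight $2/s$, and the monotonicity of $e^{c\rho^2}I(\rho)/\rho^{n+1}$, together with the order bound, gives doubling. For the asymptotically harmonic $v$, the error term coming from $G$ being only asymptotic to $G_0$ has to be absorbed, and I expect this comparison to be the main obstacle. Once it holds, $v_i$ differs from the Gromov–Schoen blow-up by a bounded rescaling of the target. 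So a subsequence of the $v_i$ converges uniformly on compact subsets of $B_1$ to a constant multiple of a tangent map $v_*$ at $x$.

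To finish, note that the $\tilde h_i$ are uniformly close to the $v_i$, which are uniformly Lipschitz by the energy bound. The $(k+1)$-homogeneous maps $\tilde h_i$ are therefore bounded on $B_1$ and are determined by their values there, so they are equi-Lipschitz on compact sets and converge to a map $h_*$ that is still $(k+1)$-homogeneous: homogeneity and invariance along a $(k+1)$-plane pass to limits once the planes are made to converge by compactness of the Grassmannian. It follows that $v_*=h_*$ on $B_1$. Because $v_*$ is intrinsically homogeneous, this identity extends to all of $\RR^n$. Thus $v$ has a $(k+1)$-homogeneous tangent map at $x$, which contradicts $x\in\calS_0^k(v)$.
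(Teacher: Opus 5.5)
Your proposal is correct and follows the paper's proof essentially step for step. The structure matches: the two trivial inclusions, a contradiction sequence $(s_i,h_i)$ split into the cases $\inf s_i>0$ and $s_i\to 0$, and in the latter a blow-up normalized by the smoothed height. The paper gets your two-sided comparison $I_{\phi,v}\sim I^v$ from $(I^v)'\ge 0$ for $\sigma<R_0$, which gives $I_{\phi,v}(x,s)\le 2I^v(x,s)$ and $I_{\phi,v}(x,2s)\ge 2\ln 2\, I^v(x,s)$, together with the doubling \eqref{eqn:6} under the monotone bound on $\Ord_{\phi,v}(x,\cdot)$, rather than from the harmonic-map monotonicity of $e^{c\rho^2}I/\rho^{n+1}$ that you quote.

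One step is misjustified: boundedness does not make the $\tilde h_i$ equi-Lipschitz. You don't need it, though. $\tilde h_i\to c\,v_*$ follows from the triangle inequality, invariance along a limit of the $(k+1)$-planes passes to the limit by continuity of $v_*$ (and of $v$ in your first case), and homogeneity of a tangent map is automatic.
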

    \begin{remark}
    Following the assumption in \cite{dm}, we know that the singular part $v$ of a harmonic map $u$ is an asymptotically harmonic map. Daskalopoulos and Mese showed in \cite{dm} that the singular part of a harmonic map into an NPC DM-complex also has tangent maps. Let $u=(V,v):\Omega\to (\RR^j \times Y_C^{N-j},d_G)$ be a local representation of a harmonic map into a conical DM-complex, and let $x_0\in \Omega$ be a point with $v(x_0)=P_0$. For any $\sigma>0$, define $v_{\sigma}:(B_{1}(0),g_{\sigma})\to (Y_C,\lambda_{\sigma}^{-1}d)$ by $v_{\sigma}(x)=v(\sigma x)$, where $d$ is the distance on $Y_C$, $g_{\sigma}(x)=g(\sigma x)$, and $\lambda_{\sigma}=(\sigma^{1-n}I^v(x_0,\sigma))^{1/2}$. Then, there exists a subsequence $\{v_{\sigma_i}\}$ converging locally uniformly to a nonconstant homogeneous harmonic map $v_*:B_1(0)\to T_{v(0)}Y_C$ as $\sigma_i \to 0$, called a {\bf tangent map to} $v$ \textbf{at} $x_0=0$.
    \end{remark}
\section{Smoothed order for singular part}\label{section:smoothedorder}
In this section, we define the smoothed order and the corresponding frequency pinching for the singular component $v$ of the local splitting $u=(V,v).$ The concept of smoothed order is originally introduced in \cite{dmsv}. Here, the smoothed order is further modified to adapt to the asymptotically harmonic map $v$ since $v$ is asymptotically harmonic due to the asymptotically product metric in \cite{dm}. We prove the corresponding monotonicity formulae and bounds on the smoothed order and height. Then we derive the scale and base-point estimates fitting in the following mean-flatness argument, ending with the convergence argument in energy densities in the blow-up setting.

Fix $j \in \{0,...,N-1\}$ and let $x_0 \in \calS^k_j(u).$ Then $J(x_0)=j.$ Let $(\sigma_j,V,v,j,Y_C^{N-j})$ be the optimal splitting data at $x_0.$ Identifying $x_0=0$ via normal coordinates, we have a local representation $u=(V,v):B_{\sigma_j}(0)\to (\RR^j\times Y_C^{N-j},d_G)$ of a harmonic map as in \cite[(17)]{dm}. By \cite[Proposition 53]{dm} there exists $C_1>0$ depending only on $\sigma_j$, the constant in \cite[(29)--(33)]{dm}, and the Lipschitz constant of $u$ on $B_{\sigma_j}(0)$ such that for all $x\in B_{\sigma_j/2}(0)$ and all $\sigma\in (0,\sigma_j/4)$, we have
    \begin{align*}
        (E^v)'(x,\sigma)&:=\der{\sigma}E^v(x,\sigma)\geq \frac{n-2}{\sigma}E^v(x,\sigma)+2\int_{\partial B_{\sigma}(x)}\abs{\pder[v]{r}}^2-C_1\paren{E^v(x,\sigma)+\frac{1}{\sigma}\int_{B_{\sigma}(x)}d^2(v,P_0)}.
    \end{align*}
    The authors claim this only for $x\in F_0(v)$, but the proof does not require this assumption. Applying \cite[Lemma 45]{dm} (the proof of which also does not require the assumption that $x\in F_0(v)$), there exists $C_1'>0$ depending only on $n$, $\sigma_j$, the constant in \cite[(29)--(33)]{dm}, and the Lipschitz constant of $u$ on $B_{\sigma_j}(0)$ such that for $x,\sigma$ as before, we have
    \begin{align*}
        (E^v)'(x,\sigma)&\geq \frac{n-2}{\sigma}E^v(x,\sigma)+2\int_{\partial B_{\sigma}(x)}\abs{\pder[v]{r}}^2-C_1'(E^v(x,\sigma)+I^v(x,\sigma)).
    \end{align*}
    Integrating this against $-\frac{\sigma}{r^2}\phi'\paren{\frac{\sigma}{r}}$ for $r\leq\sigma_j/4$, we get
    {\small\begin{align*}
        E_{\phi,v}'(x,r)&:=\der{r}E_{\phi,v}(x,r)\geq \int_0^{\infty}-\frac{\sigma}{r^2}\phi'\paren{\frac{\sigma}{r}}\paren{\frac{n-2}{\sigma}}E^v(x,\sigma)d\sigma+\int_0^{\infty}-\frac{2\sigma}{r^2}\phi'\paren{\frac{\sigma}{r}}\int_{\partial B_{\sigma}(x)}\abs{\pder[v]{r}}^2d\sigma\\
        &-C_1'\int_0^{\infty}-\frac{\sigma}{r^2}\phi'\paren{\frac{\sigma}{r}}(E^v(x,\sigma)+I(x,\sigma))d\sigma\\
        =&\frac{2\xi_{\phi,v}(r)}{r^2}+\frac{n-2}{r}\int_0^{\infty}-\frac{1}{r}\phi'\paren{\frac{\sigma}{r}}E^v(x,\sigma)d\sigma-C_1'\int_0^{\infty}-\frac{\sigma}{r^2}\phi'\paren{\frac{\sigma}{r}}E^v(x,\sigma)d\sigma\\
        &-C_1'\int_0^{\infty}-\frac{\sigma}{r^2}\phi'\paren{\frac{\sigma}{r}}I^v(x,\sigma)d\sigma,
    \end{align*}}
    where $\xi_{\phi,v}(r):=\int_0^{\infty}-\sigma\phi'(\frac{\sigma}{r})\int_{\partial B_{\sigma}(x)}\left|\frac{\partial v}{\partial r}\right|^2 \, d\sigma.$
    By using the fact that $\sigma/r\leq1$ whenever $\phi'(\sigma/r)>0$ on the third and fourth terms on the RHS, using the fact that $1/\sigma\leq 2/r$ whenever $\phi'(\sigma/r)>0$ on the fourth term, and integrating the second and third terms on the RHS by parts, we get
    \begin{equation}\label{eq:0}
        E_{\phi,v}'(x, r)\geq \paren{\frac{n-2}{r}-C_1'}E_{\phi,v}(x, r)+\frac{2}{r^2}\xi_{\phi,v}(r)-2C_1'I_{\phi,v}(x, r).
    \end{equation}
    By \cite[Proposition 37]{dm}, there exists $C_2>0$ depending only on $n$, $\sigma_j$, the Lipschitz constant of $u$ on $B_{\sigma_j}(0)$, and the constant in \cite[(29)--(33)]{dm}, such that for all $x\in B_{\sigma_j/2}(0)$ and $\sigma\in (0,\sigma_j/4)$, we have
    \begin{equation*}
        \int_{B_{\sigma}(x)}(2\eta|\nabla v|^2+\nabla \eta \cdot \nabla d^2(v,P_0))\leq C_2 \int_{B_{\sigma}(x)} \eta d^2(v,P_0)
    \end{equation*}
    and by choosing $\eta$ as an approximation to the characteristic function of $B_{\sigma}(x)$ and rearranging, we have
    \begin{equation*}
        \begin{split}
            2E^v(x, \sigma)&\leq \int_{\partial B_\sigma(x)}\pder{r}d^2(v,P_0)+C_2\int_{B_{\sigma}(x)}d^2(v,P_0)\\
            &\leq \frac{1-n}{\sigma}I^v(x,\sigma)+(I^v)'(x, \sigma)+C_2'\sigma(I^v(x, \sigma)+\sigma E^v(x, \sigma))
        \end{split}
    \end{equation*}
 for some $C_2'>0$, where the second inequality uses \cite[Lemma 45]{dm}. Note that $C_2'$ depends only on $n$, $\sigma_j$, the constant in \cite[(29)--(33)]{dm}, and the Lipschitz constant of $u$ on $B_{\sigma_j}(0)$. For $\sigma<R_0:=\min\{\frac{\sigma_j}{4},\sqrt{\frac{2}{C'_2}},\sqrt{\frac{n-1}{C'_2}}\}$, we have $(2-C_2'\sigma^2)E^v(\sigma)\geq 0$ and $(\frac{n-1}{\sigma}-C_2'\sigma)I^v(x,\sigma)\geq 0$, so for $\sigma<R_0$ we have 
 \[
 (I^v)'(x,\sigma) \geq (2-C'_2\sigma^2)E^v(x,\sigma)+\left(\frac{n-1}{\sigma}-C'_2\sigma\right)I^v(x,\sigma) \geq 0,
 \]
 which implies that for $\sigma<R_0$ we have
    \[
    \int_{B_{\sigma}(x)}d^2(v,P_0)=\int_0^{\sigma} I^v(x, r)dr \leq\int_0^{\sigma} I^v(x, \sigma)=\sigma I^v(x, \sigma)dr.
    \]
    Then,
    \begin{align*}
        2E^v(x,\sigma)&\leq \int_{\partial B_{\sigma}(x)}\pder{r}d^2(v,P_0)+C_2\sigma I^v(x,\sigma)\leq \int_{\partial B_{\sigma}(x)}\pder{r}d^2(v,P_0)+C_3 I^v(x,\sigma),
    \end{align*}
    where $C_3=C_2R_0$. Integrating this against $-\frac{1}{r}\phi'\paren{\sigma/r}$ for $r\leq R_0$ and using integration by parts on the LHS, we get
    \begin{align*}
        2E_{\phi,v}(x, r)\leq \int_{B_r(x)}-\frac{1}{r}\phi'\paren{\frac{|y-x|}{r}}\pder{r}d^2(v(y),P_0)dy+C_3\int_0^{\infty}-\frac{1}{r}\phi'\paren{\frac{\sigma}{r}}I^v(x,\sigma)d\sigma.
    \end{align*}
    Using the fact that $\frac{1}{r}\leq\frac{1}{\sigma}$ whenever $\phi'(\sigma/r)>0$ on the second term on the RHS, we obtain
    \begin{equation}\label{eq:1}
        \int_{B_r(x)}-\frac{1}{r}\phi'\paren{\frac{|y-x|}{r}}\pder{r}d^2(v(y),P_0)dy\geq 2E_{\phi,v}(x, r)-C_3I_{\phi,v}(x, r).
    \end{equation}
    By the same computations as in \cite[Lemma 4.1]{dees}, we have
    \begin{equation}\label{eq:4}
        I_{\phi,v}'(x, r)=\frac{n-1}{r}I_{\phi,v}(x, r)+\int_{B_r(x)}-\frac{1}{r}\phi'\paren{\frac{|y-x|}{r}}\pder{r}d^2(v(y),P_0)dy.
    \end{equation}
    Thus, by (\ref{eq:0}) and (\ref{eq:4}) we have
    \begin{align*}
        \der{r}\log \frac{rE_{\phi,v}(x, r)}{I_{\phi,v}(x, r)} =& \frac{1}{r}+\frac{\frac{d}{dr}E_{\phi,v}(x,r)}{E_{\phi,r}(x,r)}-\frac{\frac{d}{dr}I_{\phi,v}(x,r)}{I_{\phi,v}(x,r)}\\
        \geq& -C_1'\paren{1+2r\frac{I_{\phi,v}(x, r)}{rE_{\phi,v}(x, r)}}+\frac{2}{r^2E_{\phi,v}(x, r)}\biggl(\xi_{\phi,v}(r)\\
        &-\left.r\cdot\frac{rE_{\phi,v}(x, r)}{I_{\phi,v}(x, r)}\int_{B_r(x)}-\frac{1}{r}\phi'\paren{\frac{|y-x|}{r}}d(v(y),P_0)\pder{r}d(v(y),P_0)dy\right)\\
        =&-C_1'\paren{1+2r\frac{I_{\phi,v}(x, r)}{rE_{\phi,v}(x, r)}}+\frac{2}{r^2E_{\phi,v}(x, r)}\biggl(\xi_{\phi,v}(r)\\
        &-2r\cdot\frac{rE_{\phi,v}(x, r)}{I_{\phi,v}(x, r)}\int_{B_r(x)}-\frac{1}{r}\phi'\paren{\frac{|y-x|}{r}}d(v(y),P_0)\pder{r}d(v(y),P_0)dy\\
        &+\left.r\cdot\frac{rE_{\phi,v}(x, r)}{I_{\phi,v}(x, r)}\int_{B_r(x)}-\frac{1}{r}\phi'\paren{\frac{|y-x|}{r}}d(v(y),P_0)\pder{r}d(v(y),P_0)dy\right)\\
    \end{align*}
    Using \eqref{eq:1} on the final term on the RHS, we get
    {\small\begin{align*}
        \der{r}\log \frac{rE_{\phi,v}(x,r)}{I_{\phi,v}(x,r)}&\geq-C_1'-C_3-2C_1'r\frac{I_{\phi,v}(x,r)}{rE_{\phi,v}(x,r)}+\frac{2}{r^2E_{\phi,v}(x,r)}\biggl(\xi_{\phi,v}(r)\\
        &-2r\cdot\frac{rE_{\phi,v}(x,r)}{I_{\phi,v}(x,r)}\int_{B_r(x)}-\frac{1}{r}\phi'\paren{\frac{|y-x|}{r}}d(v(y),P_0)\pder{r}d(v(y),P_0)dy+\frac{r^2E^2_{\phi,v}(x,r)}{I_{\phi,v}(x,r)}\biggr)\\
        &=-C_1'-C_3-2C_1'r\frac{I_{\phi,v}(x,r)}{rE_{\phi,v}(x,r)}+\frac{2}{r^2E_{\phi,v}(x,r)}\int_{B_r(x)}-|y-x|^{-1}\phi'\paren{\frac{y}{r}}\biggl(|y-x|^2\abs{\pder[v]{r}}^2\\
        &-2|y-x|\frac{rE_{\phi,v}(x,r)}{I_{\phi,v}(x,r)}d(v(y),P_0)\pder{r}d(v,P_0)+\paren{\frac{rE_{\phi,v}(x,r)}{I_{\phi,v}(x,r)}}^2d^2(v(y),P_0)\biggr)dy
    \end{align*}}
    Since the target of $v$ is a conical $F$-connected complex, we may consider it to be embedded in $\RR^N$ for some $N$. In this situation, the previous inequality can be rewritten as
    \begin{align}
        \der{r}\log \frac{rE_{\phi,v}(x,r)}{I_{\phi,v}(x,r)}&\geq-C_1'-C_3-2C_1'r\frac{I_{\phi,v}(x,r)}{rE_{\phi,v}(x,r)}\notag\\
        &+\frac{2}{r^2E_{\phi,v}(x,r)}\int_{B_r(x)}-|y-x|^{-1}\phi'\paren{\frac{y}{r}}\abs{|y-x|\pder[v]{r}(y)-\frac{rE_{\phi,v}(x,r)}{I_{\phi,v}(x,r)}v(y)}^2dy\label{eq:2}\\
        &\geq -(C_1'+C_3)-2C_1'r\frac{I_{\phi,v}(x,r)}{rE_{\phi,v}(x,r)}\geq -A-2Ar\frac{I_{\phi,v}(x,r)}{rE_{\phi,v}(x,r)}\label{eq:3}
    \end{align}
    where $A:=C_1'+C_3$. Denote $\BDOrd_{\phi,v}(x,r):=\frac{rE_{\phi,v}(x,r)}{I_{\phi,v}(x,r)}$. Adding $A$ to both sides of \eqref{eq:3} and then multiplying both sides by $e^{Ar}\BDOrd_{\phi,v}(x,r)$, we have
    \begin{equation*}
        \der{r}e^{Ar}\BDOrd_{\phi,v}(x,r)\geq-2Are^{Ar}\geq\der{r}-Ar^2e^{Ar}.
    \end{equation*}
    The above inequality shows the following monotonicity property for the smoothed order:
    \begin{lemma}\label{monotoneorder}
        There exist $A,R_0>0$ depending only on $n$, $\sigma_j$, the constant in \cite[(29)--(33)]{dm}, and the Lipschitz constant of $u$ on $B_{\sigma_j}(0)$, such that for all $x\in B_{\sigma_j/2}(0)$, the functions 
        \[r\mapsto I_{\phi,v}(x,r), \qquad r\mapsto I^v(x,r),\] 
        and the \textbf{smoothed order} (or \textbf{smoothed frequency}) \textbf{function} 
        \[r\mapsto e^{Ar}(\BDOrd_{\phi,v}(x, r)+Ar^2):=\Ord_{\phi,v}(x,r)\] are nondecreasing for $r< R_0 \leq \sigma_j/4$. Thus, the \textbf{frequency pinching} \[W_s^r(x,v):=\Ord_{\phi,v}(x,r)-\Ord_{\phi,v}(x,s)\] is a nonnegative function on $B_{\sigma_j/2}(0)$ for $0<s\leq r\leq R_0 \leq \sigma_j/4$.
    \end{lemma}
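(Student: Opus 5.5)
The lemma has three monotonicity claims: for $I^v$, for $I_{\phi,v}$, and for the smoothed order. We take $A:=C_1'+C_3$ and $R_0:=\min\{\sigma_j/4,\sqrt{2/C_2'},\sqrt{(n-1)/C_2'}\}$ as above. These depend only on $n$, $\sigma_j$, the constants in \cite[(29)--(33)]{dm}, and the Lipschitz constant of $u$. Throughout, $x\in B_{\sigma_j/2}(0)$ is fixed.

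\emph{Monotonicity of $I^v$.} This is already contained in the computation preceding the statement. For $\sigma<R_0$ we have both $2-C_2'\sigma^2\ge 0$ and $\frac{n-1}{\sigma}-C_2'\sigma\ge 0$. Hence
\[
(I^v)'(x,\sigma)\ge (2-C_2'\sigma^2)E^v(x,\sigma)+\Bigl(\frac{n-1}{\sigma}-C_2'\sigma\Bigr)I^v(x,\sigma)\ge 0.
\]
I would only remark that $I^v(x,\cdot)$ is absolutely continuous, since $v$ is Lipschitz, so the pointwise a.e. derivative inequality integrates.

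\emph{Monotonicity of $I_{\phi,v}$.} Two routes are available.
\begin{enumerate}
\item Write $I_{\phi,v}(x,r)=\int_0^\infty -\frac{1}{\sigma}\phi'(\sigma/r)I^v(x,\sigma)\,d\sigma$. Since $-\phi'=2\chi_{[1/2,1]}$, the substitution $\sigma=rt$ gives $I_{\phi,v}(x,r)=2\int_{1/2}^1 t^{-1}I^v(x,rt)\,dt$. For $r<R_0$ we have $rt<R_0$, so this is an average of nondecreasing functions of $r$ and is therefore nondecreasing.
\item Use \eqref{eq:4} together with \eqref{eq:1}: they give $I_{\phi,v}'\ge \frac{n-1}{r}I_{\phi,v}+2E_{\phi,v}-C_3I_{\phi,v}\ge 0$, provided $C_3 r\le n-1$. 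Route 1 avoids shrinking $R_0$, so I would use it.
\end{enumerate}

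\emph{Monotonicity of the smoothed order.} Start from \eqref{eq:3}, written for $D:=\BDOrd_{\phi,v}(x,r)$ as $(\log D)'\ge -A-2Ar/D$. Both $E_{\phi,v}$ and $I_{\phi,v}$ are Lipschitz in $r$, and $I_{\phi,v}>0$ by subharmonicity as noted earlier, so $D$ is locally absolutely continuous and this is legitimate a.e. Multiplying by $D$ gives $D'\ge -AD-2Ar$. Then
\[
\frac{d}{dr}\bigl(e^{Ar}(D+Ar^2)\bigr)=e^{Ar}\bigl(D'+AD+2Ar+A^2r^2\bigr)\ge A^2r^2e^{Ar}\ge 0.
\]
This is exactly the nondecreasing property of $\Ord_{\phi,v}(x,\cdot)$ on $(0,R_0)$. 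Integrating the a.e. inequality gives the monotonicity, and the nonnegativity of $W^r_s(x,v)$ for $s\le r$ follows immediately.

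\emph{Main obstacle.} The delicate point is the validity of \eqref{eq:0} and \eqref{eq:1} uniformly in $x\in B_{\sigma_j/2}(0)$, not just at $x_0$. This requires checking that \cite[Proposition 53, Lemma 45, Proposition 37]{dm} do not use $x\in F_0(v)$. That was asserted above, and I would verify it by noting that only the local metric estimates on $B_{\sigma_j}(0)$ enter those proofs.

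A second point to check is the expansion of the square in \eqref{eq:2}, which uses the embedding of the conical $F$-connected target into $\RR^N$ with $P_0$ at the origin. Along radial directions one needs $\langle v,\partial_r v\rangle=d(v,P_0)\,\partial_r d(v,P_0)$, and I would justify this via the cone structure. Even without this identity, only the nonnegativity of that term is needed. One can alternatively obtain it from the pointwise inequality $|\partial_r v|\ge|\partial_r d(v,P_0)|$ and Cauchy--Schwarz, which gives the same lower bound \eqref{eq:3}.
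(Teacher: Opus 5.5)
Your proposal is correct and follows the paper's own argument. The monotonicity of $I^v$ comes from the differential inequality $(I^v)'\ge(2-C_2'\sigma^2)E^v+(\frac{n-1}{\sigma}-C_2'\sigma)I^v$ for $\sigma<R_0$, and the smoothed order is handled by the same integrating-factor step applied to \eqref{eq:3} with $A=C_1'+C_3$, giving $\frac{d}{dr}\bigl(e^{Ar}(\BDOrd_{\phi,v}+Ar^2)\bigr)\ge 0$. Your averaging formula $I_{\phi,v}(x,r)=2\int_{1/2}^1 t^{-1}I^v(x,rt)\,dt$ and your Cauchy--Schwarz justification via $|\partial_r v|\ge|\partial_r d(v,P_0)|$ make explicit what the paper uses implicitly, namely its identity $I_{\phi,v}(x,r)=2\int_{r/2}^r I^v(x,\sigma)\sigma^{-1}d\sigma$ and the embedding of the target into $\RR^N$.
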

    \begin{lemma}\label{monotoneheight}
        There exists $C(n)>0$ such that for all $y\in B_{\tau}(0)\subset B_{4\tau}(0)\subset B_{R_0}(0)$, we have $I_{\phi,v}(y,\tau)\leq CI_{\phi,v}(0,4\tau)$.
    \end{lemma}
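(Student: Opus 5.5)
Write $I_{\phi,v}$ as a weighted average of the unsmoothed heights. Since $-\phi'(t)=2$ on $(1/2,1)$ and vanishes elsewhere, polar coordinates about $y$ give
\[
I_{\phi,v}(y,\tau)=\int_0^\infty -\frac{1}{\sigma}\phi'\paren{\frac{\sigma}{\tau}}\,I^v(y,\sigma)\,d\sigma=\int_{\tau/2}^{\tau}\frac{2}{\sigma}I^v(y,\sigma)\,d\sigma,
\]
where $I^v(y,\sigma)=\int_{\partial B_\sigma(y)}d^2(v,P_0)$; the reference point $P_0=O_Y$ is fixed and does not move with $y$. The formula above yields two comparisons. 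First, by Lemma \ref{monotoneorder} the map $\sigma\mapsto I^v(y,\sigma)$ is nondecreasing for $\sigma<R_0$, so
\[
I_{\phi,v}(y,\tau)\le 2\log 2\, I^v(y,\tau)\le \frac{2}{\tau}\int_{\tau}^{2\tau}I^v(y,\sigma)\,d\sigma=\frac{2}{\tau}\int_{B_{2\tau}(y)\setminus B_\tau(y)}d^2(v,P_0).
\]
Second, for the center $0$,
\[
I_{\phi,v}(0,4\tau)=\int_{2\tau}^{4\tau}\frac{2}{\sigma}I^v(0,\sigma)\,d\sigma\ge\frac{1}{2\tau}\int_{B_{4\tau}(0)\setminus B_{2\tau}(0)}d^2(v,P_0).
\]

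The bound on $I_{\phi,v}(y,\tau)$ involves the annulus $B_{2\tau}(y)\setminus B_\tau(y)$, which lies in $B_{3\tau}(0)$ when $|y|<\tau$. The lower bound on $I_{\phi,v}(0,4\tau)$ only sees $B_{4\tau}\setminus B_{2\tau}$, so we need to push mass outward, which is where monotonicity of $I^v(0,\cdot)$ enters. Using $\int_{B_{3\tau}(0)}d^2(v,P_0)=\int_0^{3\tau}I^v(0,\sigma)\,d\sigma\le 3\tau\, I^v(0,3\tau)$ together with $I^v(0,3\tau)\le I^v(0,\sigma)$ for $\sigma\in[3\tau,4\tau]$, we get
\[
\int_{B_{3\tau}(0)}d^2(v,P_0)\le 3\int_{3\tau}^{4\tau}I^v(0,\sigma)\,d\sigma\le 3\int_{B_{4\tau}(0)\setminus B_{2\tau}(0)}d^2(v,P_0).
\]
Chaining the three estimates gives $I_{\phi,v}(y,\tau)\le \frac{2}{\tau}\cdot 3\cdot 2\tau\, I_{\phi,v}(0,4\tau)=12\,I_{\phi,v}(0,4\tau)$, so $C=12$, which is independent even of $n$.

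The main point to check is that Lemma \ref{monotoneorder} really applies at both centers and at all radii used, namely $\sigma\le 2\tau$ at $y$ and $\sigma\le 4\tau$ at $0$. This requires $y\in B_{\sigma_j/2}(0)$, which holds since $\tau<R_0\le\sigma_j/4$, and $4\tau\le R_0$, which is the hypothesis $B_{4\tau}(0)\subset B_{R_0}(0)$. If one only trusts monotonicity for $\sigma<R_0$ strictly, apply the argument with $4\tau$ replaced by $4\tau-\epsilon$ and let $\epsilon\to0$ using continuity of $I_{\phi,v}$ in $r$. The non-harmonicity of $v$ causes no trouble here, because only monotonicity of $I^v$ is used, and that was established through the constant $C_2'$ and the choice of $R_0$.
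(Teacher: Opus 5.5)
Your proof is correct and follows essentially the paper's route: monotonicity of $I^v(0,\cdot)$ pushes the solid integral of $d^2(v,P_0)$ over a ball about $0$ out to the annulus $B_{4\tau}(0)\setminus B_{2\tau}(0)$, which is where the weight of $I_{\phi,v}(0,4\tau)$ lives, and ball containment handles the off-center point $y$. Your detour through monotonicity at $y$ is unnecessary, since on the support the weight satisfies $2/|z-y|\le 4/\tau$, which gives directly $I_{\phi,v}(y,\tau)\le \frac{4}{\tau}\int_{B_{2\tau}(0)}d^2(v,P_0)$; this is the paper's final step, where the factor $1/\tau$ is dropped in the text.
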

    \begin{proof}
        Since $(I^v)'(0,r)\geq 0$, we have for all $r\in (2\tau,4\tau)$ that
        \begin{equation*}
            \int_{B_{2\tau(0)}}d^2(v,P_0)\leq C\tau\int_{\partial B_r(0)}d^2(v,P_0).
        \end{equation*}
        Integrating both sides against $-r^{-1}\phi'(r/4\tau)$, we see that
        \begin{equation}\label{intdbound}
            \int_{B_{2\tau}(0)}d^2(v,P_0)\leq C\tau I_{\phi,v}(0,4\tau),
        \end{equation}
        and since $B_{\tau}(y)\subset B_{2\tau}(0)$, we have $I_{\phi,v}(y,\tau)\leq C\int_{B_{2\tau}(0)}d^2(v,P_0)$.
    \end{proof}

    \begin{lemma}\label{ineq:ord}
        There exists $C>0$ depending only on $n$, $\sigma_j$, the Lipschitz constant of $u$ on $B_{\sigma_j}(0)$, and the estimates in \cite[(29)--(33)]{dm}, such that for all $y\in B_{\tau/4}(0)\subset B_{16\tau}(0)\subset B_{R_0}(0)$, we have $\Ord_{\phi,v}(y,\tau)\leq C(\Ord_{\phi,v}(0,16\tau)+1)$.
    \end{lemma}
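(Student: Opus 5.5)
Since $\Ord_{\phi,v}(y,\tau)=e^{A\tau}(\BDOrd_{\phi,v}(y,\tau)+A\tau^2)$ and $\tau<R_0$ is bounded, it suffices to bound $\BDOrd_{\phi,v}(y,\tau)=\tau E_{\phi,v}(y,\tau)/I_{\phi,v}(y,\tau)$ by $C(\BDOrd_{\phi,v}(0,16\tau)+1)$. By monotonicity in Lemma \ref{monotoneorder}, $\Ord_{\phi,v}(0,16\tau)$ controls $\BDOrd_{\phi,v}(0,16\tau)$ up to the factor $e^{16A\tau}$ and the additive term $A(16\tau)^2$, both bounded by constants depending on $R_0$. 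The plan is therefore to bound the numerator from above and the denominator from below by the corresponding quantities centered at $0$ at scale $16\tau$.

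For the numerator, since $\phi\le 1$ and is supported in $[0,1]$, we have $E_{\phi,v}(y,\tau)\le E^v(y,\tau)\le E^v(0,2\tau)$, because $B_\tau(y)\subset B_{2\tau}(0)$. Since $\phi\equiv1$ on $[0,1/2]$, we also have $E^v(0,2\tau)\le E_{\phi,v}(0,4\tau)\le E_{\phi,v}(0,16\tau)$. Hence $\tau E_{\phi,v}(y,\tau)\le \tau E_{\phi,v}(0,16\tau)=\tfrac{1}{16}\BDOrd_{\phi,v}(0,16\tau)I_{\phi,v}(0,16\tau)$. It therefore remains to prove the lower bound
\[
I_{\phi,v}(y,\tau)\ge c\,I_{\phi,v}(0,16\tau),
\]
possibly with the factor $\BDOrd_{\phi,v}(0,16\tau)+1$ appearing in a way that is harmless. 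This reverse doubling across centers is the main obstacle. Lemma \ref{monotoneheight} gives only the opposite direction.

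For this step I would first note that $-\phi'=2\chi_{[1/2,1]}$, so $I_{\phi,v}(y,\tau)\simeq \tau^{-1}\int_{B_\tau(y)\setminus B_{\tau/2}(y)}d^2(v,P_0)$. Monotonicity of $I^v(y,\cdot)$ then gives $I_{\phi,v}(y,\tau)\gtrsim I^v(y,\tau/2)$. I would use the differential inequality $(I^v)'\ge (n-1-C\sigma^2)\sigma^{-1}I^v+(2-C\sigma^2)E^v$ together with the energy identity $2E^v\approx \partial_\sigma I^v-\frac{n-1}{\sigma}I^v$ up to errors. These give the standard doubling estimate at center $y$:
\[
I^v(y,\sigma_2)\le (\sigma_2/\sigma_1)^{\,n-1+2N_y}e^{C}I^v(y,\sigma_1),
\]
where $N_y$ is an upper bound for the frequency at $y$ up to scale $\sigma_2$.

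To avoid circularity, since $N_y$ is what we are trying to bound, I would instead use a frequency bound at $0$. The relevant estimate is
\[
\int_{B_{8\tau}(0)}d^2(v,P_0)\le \int_{B_{8\tau}(y)}d^2(v,P_0),
\]
where the integral over $B_{8\tau}(y)$ is bounded by growth from $B_{\tau/2}(y)$ controlled by a bounded frequency on the intermediate scale. Alternatively, I would argue by contradiction and compactness. Suppose there are sequences with $I_{\phi,v}(y_i,\tau_i)/I_{\phi,v}(0,16\tau_i)\to0$ while $\BDOrd_{\phi,v}(0,16\tau_i)$ stays bounded, the unbounded case being trivial once we allow the constant $C$. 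Rescaling by $\lambda$ at $0$ gives uniformly Lipschitz blow-ups with bounded energy, converging by \cite{dm}-type compactness to a nonconstant limit. This limit would then vanish identically on $B_{\tau/2}(y_\infty)$, which contradicts unique continuation: the positivity $I^v>0$, which follows from subharmonicity of $d^2(v,P_0)$ up to the asymptotically harmonic error.

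Since $C$ is allowed to depend on the frequency only linearly, I expect the cleaner route to be the following. Bound $\log\bigl(I^v(0,16\tau)/I^v(y,\tau/2)\bigr)$ by integrating $(\log I^v)'$, which is comparable to $\sigma^{-1}(n-1+2\BDOrd)$, along scales. Then use monotonicity of the order at $0$, with $B_{\tau/2}(y)\supset$ a small ball around points where the height is comparable to $I^v(0,\cdot)$ via the mean value property of the subharmonic function $d^2(v,P_0)$. This last comparison is the step I would work hardest on.
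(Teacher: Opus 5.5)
There is a genuine gap. You split $\BDOrd_{\phi,v}(y,\tau)$ into numerator and denominator and compare each with quantities centered at $0$; this cannot give the stated inequality, whose constant $C$ is independent of the frequency.

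The loss already occurs in your (correct) numerator bound $\tau E_{\phi,v}(y,\tau)\le \tau E_{\phi,v}(0,16\tau)$. For a degree-$\alpha$ homogeneous model centered at $0$, the two sides differ by a factor exponential in $\alpha$. So even the exact value of $I_{\phi,v}(y,\tau)$ in the denominator would leave a bound exponential in $\alpha$, while the truth is $O(\alpha)$.

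On the other hand, the step you single out as the main obstacle is cheap. Lemma \ref{monotoneheight} with the roles of $0$ and $y$ exchanged (note $0\in B_{\tau/4}(y)$) gives $I_{\phi,v}(0,\tau/4)\le C\,I_{\phi,v}(y,\tau)$. Then \eqref{eqn:6} gives $I_{\phi,v}(0,16\tau)\le 64^{n-1}e^{128N+C}I_{\phi,v}(0,\tau/4)$ with $N=\Ord_{\phi,v}(0,16\tau)$. Plugged into your ratio, this yields only $\BDOrd_{\phi,v}(y,\tau)\le CNe^{128N}$.

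Your other two routes have the same defect. The last-paragraph variant bounds the logarithm of a height ratio linearly, but then exponentiates it against the energy at $0$. The compactness argument gives a non-explicit $C(N)$. Such a bound would in fact suffice for the later uses in the paper, which only need $\Ord_{\phi,v}(y,\tau)\le C(\Lambda)$, but it is not the lemma as stated.

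The missing idea is to treat the frequency at $y$ as a \emph{lower} bound for the logarithmic growth of the height at $y$, and to compare everything at the level of logarithms. Your circularity worry comes from using the doubling (upper) direction at $y$. The paper instead integrates \eqref{eq:11} to obtain \eqref{heightbound2}, and uses monotonicity of $\Ord_{\phi,v}(y,\cdot)$ to get
$\log\frac{(4\tau)^{1-n}I_{\phi,v}(y,4\tau)}{\tau^{1-n}I_{\phi,v}(y,\tau)}\ge \frac{3}{2}e^{-AR_0}\Ord_{\phi,v}(y,\tau)-C$.
Here the unknown sits on the favorable side, so nothing is circular.

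Now apply Lemma \ref{monotoneheight} twice, once as stated and once with the centers swapped: $I_{\phi,v}(y,4\tau)\le C\,I_{\phi,v}(0,16\tau)$ and $I_{\phi,v}(0,\tau/4)\le C\,I_{\phi,v}(y,\tau)$. These bound the ratio above by $16^{n-1}C^2$ times $\frac{(16\tau)^{1-n}I_{\phi,v}(0,16\tau)}{(\tau/4)^{1-n}I_{\phi,v}(0,\tau/4)}$. By \eqref{eqn:6}, the logarithm of this last quantity is at most $128\,\Ord_{\phi,v}(0,16\tau)+C_3R_0$. Comparing the two logarithmic bounds gives $\Ord_{\phi,v}(y,\tau)\le C(\Ord_{\phi,v}(0,16\tau)+1)$ directly, with no compactness, unique continuation, or mean value argument.
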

    \begin{proof}
    We will use the following two estimates \eqref{eq:10} and \eqref{eq:11} for the smoothed heights in this proof. To make the proof easy to follow, we postpone the computations leading to \eqref{eq:10} and \eqref{eq:11} until after the proof of this lemma.
    
    For $\eta\in C_c^{\infty}(B_{\sigma_j}(0))$ with $0\leq\eta\leq 1$ and $t>0$ sufficiently small, define $v_{t\eta}:B_{\sigma_j}(0)\to (Y_C^{N-j},h)$ by $v_{t\eta}(x)=(1+t\eta(x))v(x)$ and $u_{t\eta}:B_{\sigma_j}(0)\to(\RR^j\times Y_C^{N-j},d_G)$ by $u_{t\eta}(x)=(V_{t\eta}(x),v_{t\eta}(x))$ for all $x\in \Omega$. Following the proof of \cite[Proposition 37]{dm}, we obtain
    \begin{equation}\label{eq:8}
        \limsup_{t\to 0^+}\frac{E(v)-E(v_{t\eta})}{t}\leq C_2\int_{B_{\sigma_j}(0)}\eta d^2(v,P_0).
    \end{equation}
    Since $(Y_C^{N-j},d_h)$ is a conical $F$-connected complex, we obtain as in the proof of \cite[Lemma 3.1]{dees},
    \begin{equation}\label{eq:9}
        \limsup_{t\to 0^+}\frac{E(v)-E(v_{t\eta})}{t}=\int_{B_{\sigma_j}(0)}-2\eta|\nabla v|^2-\nabla \eta\cdot\nabla d^2(v,P_0).
    \end{equation}
    For $x\in B_{\sigma_j/2}(0)$ and $\sigma\in (0,R_0)$, using (\ref{eq:8}) and (\ref{eq:9}) and taking $\eta$ to be an approximation to the characteristic function of $B_{R_0}(x)$ give
    \begin{equation}\label{eq:9.5}
        \int_{\partial B_{\sigma}(x)}\pder{r}d^2(v,P_0)-2E^v(x,\sigma)\leq C_2\sigma I^v(x,\sigma)\leq C_3I^v(x,\sigma)
    \end{equation}
    Integrating this against $-\frac{1}{r}\phi'(\sigma/r)$ for $r\in (0,R_0)$, using the fact that $\frac{1}{r}\leq \frac{1}{\sigma}$ whenever $\phi'(\sigma/r)>0$, and using \eqref{eq:4}, we have
    \begin{equation}\label{eq:10}
        \der{r}\log(r^{1-n}I_{\phi,v}(x,r))=\der{r}\log I_{\phi,v}(x,r)-\frac{n-1}{r}\leq 2\frac{\BDOrd_{\phi,v}(x,r)}{r}+C_3.
    \end{equation}
    On the other hand, by \eqref{eq:1}, we have
    \begin{equation}\label{eq:11}
        \der{r}\log(r^{1-n}I_{\phi,v}(x,r))= \der{r}\log I_{\phi,v}(x,r)-\frac{n-1}{r}\geq 2\frac{\BDOrd_{\phi,v}(x,r)}{r}-C_3.
    \end{equation}
    Thus, for $0<s<r<R_0$, we use (\ref{eq:10}) and have
    \begin{align}
        \log\frac{r^{1-n}I_{\phi,v}(x,r)}{s^{1-n}I_{\phi,v}(x,s)}&\leq\int_s^r\frac{2\BDOrd_{\phi,v}(x,t)}{t}+C_3dt\leq\int_s^r\frac{2\Ord_{\phi,v}(x,t)}{t}+C_3dt\notag\\
        &\leq\int_s^r\frac{2\Ord_{\phi,v}(x,r)}{s}+C_3dt\leq\frac{2r}{s}\Ord_{\phi,v}(x,r)+C_3R_0.\label{eqn:6}
    \end{align}
    On the other hand, we use (\ref{eq:11}) and also have
    \begin{align}
        \log\frac{r^{1-n}I_{\phi,v}(x,r)}{s^{1-n}I_{\phi,v}(x,s)}&\geq \int_s^r\frac{2\BDOrd_{\phi,v}(x,t)}{t}-C_3dt=\int_s^r\frac{2}{t}\paren{e^{-At}\Ord_{\phi,v}(x,t)-At^2}-C_3dt\notag\\
        &\geq \int_s^r\frac{2e^{-Ar}}{r}\Ord_{\phi,v}(x,s)-(2Ar+C_3)dt\notag\\
        &\geq\frac{2(r-s)}{r}e^{-AR_0}\Ord_{\phi,v}(x,s)-(2AR_0^2+C_3R_0).\label{heightbound2}
    \end{align}
    For all $\tau$ with $16\tau\leq R_0$ and all $y\in B_{\tau/4}(0)$, Lemma \ref{monotoneheight} implies that
    \begin{equation*}
        \frac{(4\tau)^{1-n}I_{\phi,v}(y,4\tau)}{\tau^{1-n}I_{\phi,v}(y,\tau)}\leq 16^{n-1}C^2\frac{(16\tau)^{1-n}I_{\phi,v}(0,16\tau)}{(\tau/4)^{1-n}I_{\phi,v}(0,\tau/4)}.
    \end{equation*}
    Taking $s=\tau$, $r=4\tau$ in \eqref{heightbound2} and $s=\tau/4$, $r=16\tau$ in \eqref{eqn:6} gives us
    \begin{align*}
        \frac{3e^{-AR_0}}{4}\Ord_{\phi,v}(y,\tau)-(2AR_0^2+C_3R_0)\leq 4(n-1)\log2+2\log C+C_3R_0+128\Ord_{\phi,v}(0,16\tau)
    \end{align*}
    Rearranging this, we get that there exists $C$ such that $\Ord_{\phi,v}(y,\tau)\leq C(\Ord_{\phi,v}(0,16\tau)+1)$.
\end{proof}
\begin{proof}[Computations of \eqref{eq:10} and \eqref{eq:11}]
Inequality \eqref{eq:9.5} is equivalent to 
    \[
    2E^v(x,\sigma) \geq \int_{\partial B_{\sigma}(x)} \frac{\partial}{\partial r}d^2(v,P_0)- C_3I^v(x,\sigma).
    \]
Integrating both sides against $-\frac{1}{r}\phi'(\sigma/r)$ for $r \in (0,R_0)$ and applying integration-by-parts, we have
\begin{align*}
    2 \int_0^{\infty} \phi\paren{\frac{\sigma}{r}} \frac{d}{d\sigma}E^v(x,\sigma)\, d\sigma = 2 E_{\phi,v}(x,r) \geq& \int_0^{\infty} -\frac{1}{r}\phi'\paren{\frac{\sigma}{r}}\int_{\partial B_{\sigma}(x)} \frac{\partial}{\partial r}d^2(v,P_0) \,d\Sigma\, d\sigma\\
    &-C_3 \int_0^{\infty} -\frac{1}{r}\phi'\paren{\frac{\sigma}{r}}I^v(x,\sigma)d\sigma\\
    &=\int_{\RR^n}-\frac{1}{r}\phi'\paren{\frac{|y-x|}{r}}\frac{\partial}{\partial r}d^2(v(y),P_0) \, dy\\ 
    &-C_3 \int_0^{\infty} -\frac{1}{r}\phi'\paren{\frac{\sigma}{r}}I^v(x,\sigma)d\sigma.
\end{align*}
Since $\frac{r}{2} \leq \sigma \leq r$ and then $\frac{1}{r} \leq \frac{1}{\sigma},$ we derive
\[
C_3 \int_0^{\infty} -\frac{1}{r}\phi'\paren{\frac{\sigma}{r}}I^v(x,\sigma)d\sigma \leq \int_0^{\infty}-\frac{1}{\sigma}\phi'\paren{\frac{\sigma}{r}}I^v(x,\sigma) \, d\sigma =I_{\phi,v}(x,r).
\]
Combining together, 
\[
2E_{\phi,v}(x,r) \geq \int_{\RR^n}-\frac{1}{r}\phi'\paren{\frac{|y-x|}{r}}\frac{\partial}{\partial r}d^2(v(y),P_0) \, dy -C_3 I_{\phi,v}(x,r).
\]
By \eqref{eq:4},
\[
I_{\phi,v}'(x,r)=\frac{n-1}{r}I_{\phi,v}(x,r)+\int_{\RR^n}-\frac{1}{r}\phi'\paren{\frac{|y-x|}{r}} \frac{\partial}{\partial r}d^2(v(y),P_0)dy.
\]
Thus, we derive \eqref{eq:10} as below:
\begin{eqnarray*}
\frac{d}{dr}\log(r^{1-n}I_{\phi,v}(x,r))&=&(I_{\phi,v}(x,r))^{-1}\int_{\RR^n}-\frac{1}{r}\phi'\paren{\frac{|y-x|}{r}} \frac{\partial}{\partial r}d^2(v(y),P_0)dy\\
&\leq& \frac{2E_{\phi,v}(x,r)+C_3I_{\phi,v}(x,r)}{I_{\phi,v}(x,r)}\\
&=&2\frac{\calD_{\phi,v}(x,r)}{r}+C_3.
\end{eqnarray*}
For \eqref{eq:11}, we rearrange \eqref{eq:1} and have
\[
(I_{\phi,v}(x,r))^{-1}\int_{\RR^n}-\frac{1}{r}\phi'\paren{\frac{|y-x|}{r}} \frac{\partial}{\partial r}d^2(v(y),P_0)dy \geq 2\frac{E_{\phi,v}(x,r)}{I_{\phi,v}(x,r)} - C_3.
\]
Applying \eqref{eq:4}, we have
\begin{align*}
\frac{d}{dr}\log(r^{1-n}I_{\phi,v}(x,r))&=\frac{1-n}{r}+\frac{\frac{d}{dr}I_{\phi,v}(x,r)}{I_{\phi,v}(x,r)}\\
&=(I_{\phi,v}(x,r))^{-1}\int_{\RR^n}-\frac{1}{r}\phi'\paren{\frac{|y-x|}{r}} \frac{\partial}{\partial r}d^2(v(y),P_0)dy.
\end{align*}
Thus,
\[
\frac{d}{dr}\log(r^{1-n}I_{\phi,v}(x,r)) \geq 2\frac{E_{\phi,v}(x,r)}{I_{\phi,v}(x,r)}-C_3=\frac{2\calD_{\phi,v}(x,r)}{r}-C_3.
\]
\end{proof}
Now we are ready to give the proof of Lemma \ref{bd3.7}.
\begin{proof}[Proof of Lemma \ref{bd3.7}]
    Suppose that there exists $x\in \calS_0^k(v)$ such that for all $\eta_1\geq \eta>0$ there exists $s\in (0,1]$ such that $v$ is $(\eta,s,k+1)$-homogeneous at $x$. Then, for all $i\in\NN$ there exists $s_i\in (0,1]$ and a $(k+1)$-homogeneous map $h_i:B_{s_i}(x)\to Y_C$ such that
    \begin{equation*}
        \sup_{B_{s_i}(x)}d(v,h_i)\leq \frac{1}{i}\paren{s_i^{1-n}I_{\phi,v}(x,s_i)}^{1/2}.
    \end{equation*}
    Suppose that $\inf_{i\in \NN}s_i=s>0$. Then, we have
    \begin{equation*}
        \sup_{B_s(x)}d(v,h_i)\leq \frac{1}{i}\paren{s_i^{1-n}I_{\phi,v}(x,s_i)}^{1/2}\leq \frac{1}{i}\paren{I_{\phi,v}(x,1)}^{1/2},
    \end{equation*}
    which implies that $h_i\to v$ uniformly as $i\to\infty$. By choosing a subsequence of the $h_i$'s, we see that there exists a $(k+1)$-dimensional subspace $V$ containing $x$ such that $v(x)=v(y)$ for all $y\in V$. This implies that any tangent map to $v$ at $x$ is $(k+1)$-homogeneous, contradicting $x\in \calS_0^k(v)$. On the other hand, suppose that for an unrelabeled subsequence $\{s_i\}$ we have $s_i\to 0$ as $i\to \infty$. Then, let $\tilde{v}_i,\tilde{h}_i:B_1(0)\to (Y_C,\paren{s_i^{1-n}I_{\phi,v}(x,s_i)}^{-1/2}d)$ be given by $\tilde{v}_i(y)=v(x+s_iy)$ and $\tilde{h}_i(y)=h_i(x+s_iy)$. Then, we have
    \[\sup_{B_1(0)}d(\tilde{v}_i,\tilde{h}_i)\leq1/i.\]
    Recall that $\frac{d}{dr}I^v(x,\sigma)\geq 0$ for $\sigma<R_0,$ so for $r<R_0,$
    \[
    I_{\phi,v}(x,r)=-\int_0^r\frac{1}{\sigma}\phi'(\frac{\sigma}{r})I^v(x,\sigma) \, d\sigma.
    \]
    Since $-\phi'(\frac{\sigma}{r})>0$ if and only if $\frac{1}{2} \leq \frac{\sigma}{r}\leq 1,$ then $\frac{1}{\sigma}\leq\frac{2}{r}$ when $-\phi'(\frac{\sigma}{r})>0.$
    By monotonicity of $I^v(x,\cdot)$ at sufficiently small radius and change of variables, 
    \begin{equation}\label{eq:12}
    I_{\phi,v}(x,r)\leq -\int_0^r \frac{2}{r}\phi'(\frac{\sigma}{r})I^v(x,r) \, d\sigma = -2I^v(x,r) \int_0^r\frac{1}{r} \phi'(\frac{\sigma}{r}) \, d\sigma=2I^v(x,r).
    \end{equation}
    Similarly, for $2r\leq R_0$ we have
    \begin{equation*}
        I_{\phi,v}(x,2r)=-\int_0^{2r}\frac{1}{\sigma}\phi'\paren{\frac{\sigma}{2r}}I^v(x,\sigma) \, d\sigma=2\int_{r}^{2r}\frac{I^v(x,\sigma)}{\sigma}\, d\sigma \geq 2\ln 2 I^v(x,r).
    \end{equation*}
    For $i$ sufficiently large such that $2s_i <R_0,$ we have uniform bounds 
    \begin{equation}\label{eq:13}
        2\geq \frac{I_{\phi,v}(x,s_i)}{I^v(x,s_i)}\geq C'\frac{I_{\phi,v}(x,2s_i)}{I^v(x,s_i)}\geq C'',
    \end{equation}
    where the second inequality in \eqref{eq:13} comes from \eqref{eqn:6} and the uniform upper bound on $\Ord_{\phi,v}(x,s_i)$. Thus, by choosing a subsequence, from \eqref{eq:13} we may assume that $\frac{I_{\phi,v}(x,s_i)}{I^v(x,s_i)}\to \mu$ as $i\to\infty$ for some $\mu>0$. By \cite[Remark 55]{dm}, a subsequence of the maps $v_i:B_1(0)\to (Y_C,(s_i^{1-n}I^v(x,s_i))^{-1/2}d)$ given by $v_i(y)=v(x+s_i y)$ converge uniformly on $B_{1/2}(0)$ to a tangent map $v_*:B_{1/2}(0)\to Y_C$. But since $\frac{I_{\phi,v}(x,s_i)}{I^v(x,s_i)}\to \mu$, we must have that $\tilde{v}_i\to \mu^{-1/2} v_*$ uniformly as $i\to \infty$ on $B_{1/2}(0)$, and thus also that $\tilde{h}_i\to \mu^{-1/2}v_*$ as $i\to \infty$. But since each $\tilde{h}_i$ is $(k+1)$-homogeneous, by choosing a subsequence as in Lemma \ref{bd5.8}, we see that $v_*$ must be $(k+1)$-homogeneous, contradicting the assumption $x\in \calS_0^k(v)$.
\end{proof}
\begin{lemma}\label{lemma:d5.3}
    Assume $\Ord_{\phi,v}(0,R_0)\leq \Lambda$ and $\sup_{B_{\sigma_j}(0)}|\nabla u| \leq M$. For any $\theta\in (0,1)$, there exists $C>0$ depending only on $\theta$, $M$, $X$, and $\Lambda$ such that for all $x\in B_{R_0/64}(0)$ and all $0<R\leq R_0/16$, we have
    \begin{equation}\label{eq:zetabound}
        \int_{B_{R/2}(x)\backslash B_{\theta R}(x)}\abs{|y-x|\pder[v]{r}-\BDOrd_{\phi,v}(x,r)v(y)}^2dy\leq CRI_{\phi,v}(x,R)(W_{\theta R/2}^R(x)+R^2).
    \end{equation}
\end{lemma}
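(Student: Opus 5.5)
The plan is to integrate the monotonicity inequality \eqref{eq:2} in $r$ over the interval $[\theta R/2,R]$ and then compare the weighted integrals on the right-hand side with the annular integral on the left of \eqref{eq:zetabound}. The constant $\BDOrd_{\phi,v}(x,r)$ there should be read at a fixed radius comparable to $R$. We take $r=R$; any other choice in $[\theta R/2,R]$ changes the estimate only by the pinching bound discussed in the third paragraph.

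First, rewrite \eqref{eq:2} in terms of $\Ord_{\phi,v}=e^{Ar}(\BDOrd_{\phi,v}+Ar^2)$. Multiplying \eqref{eq:2} by $e^{Ar}\BDOrd_{\phi,v}(x,r)$ and adding the correction terms as in the derivation of Lemma \ref{monotoneorder}, we get
\[
\frac{d}{dr}\Ord_{\phi,v}(x,r)\ \ge\ \frac{2e^{Ar}}{r\,I_{\phi,v}(x,r)}\int -|y-x|^{-1}\phi'\Big(\frac{|y-x|}{r}\Big)\Big||y-x|\partial_r v-\BDOrd_{\phi,v}(x,r)v\Big|^2dy\;-\;C r .
\]
The term $-Cr$ comes from $A$ and $2Ar\,I/E$ multiplied by $\BDOrd$. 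Since $\BDOrd\le \Lambda'$ by Lemma \ref{ineq:ord}, with $x\in B_{R_0/64}(0)$ and $R\le R_0/16$, this contributes at most $CR^2$ after integration. This is where the $R^2$ in the statement comes from.

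Next, integrate from $\theta R/2$ to $R$. For $y$ with $\theta R\le |y-x|\le R/2$, the weight $-|y-x|^{-1}\phi'(|y-x|/r)=2/|y-x|$ is positive exactly when $|y-x|\in[r/2,r]$. Such $r$ form an interval of length comparable to $|y-x|$ inside $[\theta R/2,R]$. Thus, after Fubini, every point of the annulus receives total weight at least $c(\theta)/(R\,I_{\phi,v}(x,R))$, once we use $I_{\phi,v}(x,r)\le I_{\phi,v}(x,R)$ from the monotonicity in Lemma \ref{monotoneorder}.

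The main obstacle is that $\BDOrd_{\phi,v}(x,r)$ varies with $r$, whereas the statement wants a single constant. I would handle this by writing $\BDOrd(x,r)v=\BDOrd(x,R)v+(\BDOrd(x,r)-\BDOrd(x,R))v$. Then
\[
|\BDOrd(x,r)-\BDOrd(x,R)|\le C\big(W^R_{\theta R/2}(x)+R^2\big),
\]
which follows from the definition of $\Ord_{\phi,v}$ together with $e^{Ar}=1+O(R)$. To be careful, the $O(R)\cdot\Lambda$ error this produces must be absorbed. I would absorb it using the definition $\Ord=e^{Ar}(\BDOrd+Ar^2)$ exactly, so that $\BDOrd(x,R)-\BDOrd(x,r)$ equals $e^{-AR}\Ord(R)-e^{-Ar}\Ord(r)$ plus $O(R^2)$. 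This quantity is bounded by $W+C\Lambda R$. If this linear-in-$R$ error is genuinely problematic, I would fall back to choosing the constant $\BDOrd(x,r)$ inside the integral as in the statement, where the $r$ is unspecified.

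The error term is then bounded by $|\cdots|^2\int_{B_{R/2}}d^2(v,P_0)$, which is at most $CR\,I_{\phi,v}(x,R)$ by \eqref{intdbound} and Lemma \ref{monotoneheight}. Finally, the square of the pinching is bounded by $\Lambda$ times the pinching. Combining these estimates gives \eqref{eq:zetabound}.
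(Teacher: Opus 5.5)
Your proposal is correct and is essentially the paper's argument. You integrate \eqref{eq:2} over radii $r$ comparable to $R$, use Fubini so that the weight $-|y-x|^{-1}\phi'(|y-x|/r)$ gives every point of the annulus total mass at least $c/(R\,I_{\phi,v}(x,R))$, and trade the $r$-dependent constant $\BDOrd_{\phi,v}(x,r)$ for a fixed one at the cost of $|\BDOrd_{\phi,v}(x,r)-\BDOrd_{\phi,v}(x,s)|^2d^2(v,P_0)$. The paper trades to $s=|y-x|$, which is the form later used in Lemmas \ref{comparefreq} and \ref{bd6.2}, whereas you trade to $s=R$; the two versions imply each other by the same swap together with \eqref{intdbound}.

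Your intermediate claim $|\BDOrd_{\phi,v}(x,r)-\BDOrd_{\phi,v}(x,R)|\le C(W+R^2)$ is false as written: only $W+C\Lambda' R$ holds, with $W=W^R_{\theta R/2}(x)$. No fallback is needed, though. This difference enters only squared, and $(W+C\Lambda'R)^2\le C(W+R^2)$ because $0\le W\le\Lambda'$. That is exactly how the paper produces its $R^2$ term.
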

\begin{proof}
If $\Ord_{\phi,v}(0, R_0)\leq\Lambda$, then we have $\BDOrd_{\phi,v}(x, r)\leq\Ord_{\phi,v}(x,r)\leq  \Ord_{\phi,v}(x,R_0/16)\leq C(\Lambda+1):=\Lambda'$ for all $r\leq R_0/16$ and all $x\in B_{R_0/64}(0)$. Fix $x\in B_{R_0/64}(0)$ and denote $\zeta(y)=\abs{|y-x|\pder[v]{r}-\BDOrd_{\phi,v}(x, |x-y|)v(y)}^2$ and $\eta(y,r)=\abs{|y-x|\pder[v]{r}-\BDOrd_{\phi,v}(x,  r)v(y)}^2$.  Thus, for all $\theta\in (0,1)$, $r\in(\theta R,R)$ and $0<R\leq R_0/16$ and all $y\in B_r(x)\backslash B_{r/2}(x)$, we have
    \begin{align*}
        \zeta(y)&\leq 2\eta(y,r)+2d^2(v(y),P_0)|\BDOrd_{\phi,v}(x,r)-\BDOrd_{\phi,v}(x,|y-x|)|^2\\
        &= 2\eta(y,r)+2d^2(v(y),P_0)|e^{-Ar}(W_{|y-x|}^r(x)+e^{A|y-x|}\BDOrd_{\phi,v}(x,|y-x|)\\
        &-A(r^2-|y-x|^2))-\BDOrd_{\phi,v}(x,|y-x|)|^2\\
        &\leq 2\eta(y,r)+6d^2(v(y),P_0)((W_{\theta R/2}^R(x))^2+(1-e^{-Ar})^2\BDOrd_{\phi,v}(x,|y-x|)^2+A^2r^4)\\
        &\leq 2\eta(y,r)+6d^2(v(y),P_0)(\Lambda'W_{\theta R/2}^R(x)+(A^2(\Lambda')^2+A^2(R_0/16)^2)r^2)
    \end{align*}
    where in the final inequality we used the inequality $1-e^{Ar}\leq Ar$. Putting this into \eqref{eq:2} and recalling the definition of $I_{\phi,v}(x,r)$, we get
    \begin{align*}
        \der{r}\log \BDOrd_{\phi,v}(x,r)&\geq-A-\frac{2Ar}{\BDOrd_{\phi,v}(x,r)}+\frac{1}{r^2E_{\phi,v}(x, r)}\int_{B_r(x)}-|y-x|^{-1}\phi'\paren{\frac{|y-x|}{r}}\zeta(y)dy\\
        &-6(\Lambda'W_{\theta R/2}^R(x)+(A^2(\Lambda')^2+C_1'^2(R_0/16)^2)r^2)\frac{I_{\phi,v}(x, r)}{r^2E_{\phi,v}(x,r)}\\
        &=-A-\frac{6\Lambda'W_{\theta R/2}^R(x)}{r\BDOrd_{\phi,v}(x, r)}-\frac{6r(2A+A^2(\Lambda')^2+A^2(R_0/16)^2)}{\BDOrd_{\phi,v}(x, r)}\\
        &+\frac{1}{r^2E_{\phi,v}(x, r)}\int_{B_r(x)}-|y-x|^{-1}\phi'\paren{\frac{|y-x|}{r}}\zeta(y)dy
    \end{align*}
    Adding $A$ to both sides and multiplying both sides by $e^{Ar}\BDOrd_{\phi,v}(x, r)$, we obtain
    \begin{align*}
        \der{r}e^{Ar}\BDOrd_{\phi,v}(x,r)&\geq-\frac{6\Lambda'e^{Ar}W_{\theta R/2}^R(x)}{r}-re^{Ar}(2C_1'+(C_1'+C_3)^2(\Lambda')^2+C_1'^2(R_0/16)^2)\\
        &+\frac{1}{rI_{\phi,v}(x,r)}\int_{B_r(x)}|y-x|^{-1}\phi'\paren{\frac{|y-x|}{r}}\zeta(y)dy\\
        &\geq -\frac{6\Lambda'e^{AR_0/16}}{\theta R_0/16}W_{\theta R/2}^R(x)-\der{r}Ar^2e^{Ar}-re^{AR_0}(A^2(\Lambda')^2
        \\&+A^2(R_0/16)^2)+\frac{1}{RI_{\phi,v}(x,R)}\int_{B_r(x)}-|y-x|^{-1}\phi'\paren{\frac{|y-x|}{r}}\zeta(y)dy,
    \end{align*}
    where in the final inequality we have used Lemma \ref{monotoneorder} to get $I_{\phi,v}(x,r)\leq I_{\phi,v}(x,R)$ for all $r\in (\theta R,R)$. Adding $\der{r}Ar^2e^{Ar}$ to both sides and integrating over $r\in (\theta R,R)$, we obtain
    \begin{align*}
    W_{\theta R}^R(x)&\geq -\frac{6\Lambda'e^{AR_0/16}(1-\theta)}{\theta}W_{\theta R/2}^R(x)-e^{AR_0/16}(A^2(\Lambda')^2+A^2(R_0/16)^2)(1/2-\theta^2/2)R^2\\
    &+\frac{1}{RI_{\phi,v}(x,R)}\int_{\theta R}^R\int_{B_r(x)}-|y-x|^{-1}\phi'\paren{\frac{|y-x|}{r}}\zeta(y)dydr\\
    &=-\frac{6\Lambda'e^{AR_0/16}(1-\theta)}{\theta}W_{\theta R/2}^R-e^{AR_0/16}(A^2(\Lambda')^2+A^2(R_0/16)^2)(1/2-\theta^2/2)R^2\\
    &+\frac{1}{RI_{\phi,v}(x,R)}\int_{B_R(x)}\zeta(y)\paren{\int_{\theta R}^R-|y-x|^{-1}\phi'\paren{\frac{|y-x|}{r}}dr}dy
    \end{align*}
    Since $\int_{\theta R}^R-|y-x|^{-1}\phi'\paren{\frac{|y-x|}{r}}dr\geq 2\chi_{B_{R/2}(x)\backslash B_{\theta R}(x)}(y)$, rearranging terms and recalling that $W_{\theta R}^R(x)\leq W_{\theta R/2}^R(x)$ gives us that there exists $C>0$ such that
    \begin{equation*}
        \int_{B_{R/2}(x)\backslash B_{\theta R}(x)}\zeta(y)dy\leq CRI_{\phi,v}(x,R)(W_{\theta R/2}^R(x)+R^2).
    \end{equation*}
\end{proof}
We next study domain variations of the singular component and the variation of the smoothed frequency with respect to its center. The following notation will be used throughout Lemma~\ref{domainvariation} and Lemma~\ref{comparefreq}. For any fixed constant vector $w\in \RR^n$ in the domain coordinates and $\xi\in C_c^{\infty}(\Omega)$ with $\xi\geq 0$ and all $t$ sufficiently small, define the domain variation $F_t:\Omega\to \Omega$ by $F_t(x)=x+t\xi(x)w$ and the corresponding variation $v_t:\Omega\to Y_C$ by $v_t=v\circ F_t$.
\begin{lemma}\label{domainvariation}
    There exists $C>0$ depending only on the constant in the estimates \cite[(29)--(33)]{dm} and the Lipschitz constant of $u$ such that
    \begin{equation}\label{eq:wvar}
        \lim_{t\to 0}\frac{E(v_t)-E(v)}{t}\leq C|w|\int_{\Omega}\xi(d^2(v,P_0)+|\nabla v|^2).
    \end{equation}
\end{lemma}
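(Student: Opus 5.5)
The plan is to use that $u=(V,v)$ is energy minimizing for the full metric $G$, and that $G$ differs from the product metric $G_0=H\oplus h$ by an error controlled by $d(v,P_0)$, as in the metric estimates \cite[(29)--(33)]{dm}. The domain variation will be applied to the whole map: set $u_t:=u\circ F_t=(V\circ F_t,v\circ F_t)$. Since $F_t$ is a diffeomorphism of $\Omega$ equal to the identity off $\operatorname{supp}\xi$ for small $t$, minimality gives
\[
E^G(u_t)\ge E^G(u).
\]
Write $E^G(u)=E^{G_0}(u)+\mathcal{E}(u)$ with $E^{G_0}(u)=E^H(V)+E(v)$, where $E(v)$ is the energy of $v$ with respect to $h$. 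Then
\[
E(v_t)-E(v)\le -\bigl(E^H(V\circ F_t)-E^H(V)\bigr)-\bigl(\mathcal{E}(u_t)-\mathcal{E}(u)\bigr).
\]
So the proof reduces to bounding the first variation of the $V$-energy and of the error term.

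For a smooth pullback metric, the first domain variation is explicit. By change of variables,
\[
\frac{d}{dt}\Big|_{t=0}E(f\circ F_t)=\int_\Omega \bigl(|\nabla f|^2\operatorname{div}(\xi w)-2\langle \partial_i f,\partial_j f\rangle\,\partial_i(\xi w^j)\bigr).
\]
This expression involves $\nabla\xi$, which is not what \eqref{eq:wvar} allows. The way around this is to exploit stationarity of the $V$ factor. From \cite{dm}, $V$ satisfies an almost-harmonic equation whose error is bounded by $C d(v,P_0)|\nabla u|$-type terms. Using this, I would integrate the $\nabla\xi$ terms by parts, writing $\partial_i(\xi w^j)=w^j\partial_i\xi$, so that they become $\xi$ times terms of the form $\langle \partial_j V,\tau_H(V)\rangle$ plus derivatives of $H$. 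These are in turn bounded by $C|w|\xi(d^2(v,P_0)+|\nabla v|^2)$, using the Lipschitz bound on $u$ and the estimates \cite[(29)--(33)]{dm}, which control $|G-G_0|$ and its first derivatives by $d(v,P_0)$ and $d^2(v,P_0)$. The same structure handles $\mathcal{E}$: its integrand is $(G-G_0)_{\alpha\beta}(u)\,\partial u^\alpha\partial u^\beta$. Because $u_t=u\circ F_t$, the $t$-derivative is again a divergence-type expression plus $\xi\, w\cdot\nabla\bigl[(G-G_0)(u)\bigr]$-type terms. Since $w$ is constant, one can also note that $v_t$ is a translation by $t\xi w$, and the energy change is $\int \xi\, w\cdot\nabla(\text{energy density})$ plus terms involving $\nabla\xi$.

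I expect the main obstacle to be the terms carrying $\nabla\xi$. Cleanly, $\frac{d}{dt}E(v_t)=\int|\nabla v|^2\, w\cdot\nabla\xi-2\int\langle\partial_w v,\nabla v\cdot\nabla\xi\rangle$, which is not pointwise bounded by $\xi$ times anything. My first attempt would be to show that the corresponding full expression for $u$ vanishes by stationarity, since it is the first variation of $E^G(u)$ along a domain variation. Then the $v$-part equals minus the $V$-part plus the metric error. The $V$-part would be handled by rewriting the stress–energy divergence of $V$ as $\xi\, w\cdot(\text{tension of }V\text{ paired with }\nabla V)$, whose tension is $O(d(v,P_0)|\nabla v|+d^2(v,P_0))$ by \cite{dm}. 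The metric error would be handled by the Lipschitz bound on $u$. Finally, Cauchy–Schwarz gives $d(v,P_0)|\nabla v|\le \tfrac12(d^2(v,P_0)+|\nabla v|^2)$, which yields \eqref{eq:wvar} with $C$ depending only on the constants in \cite[(29)--(33)]{dm} and the Lipschitz constant of $u$.
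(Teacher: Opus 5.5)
Your route is genuinely different from the paper's, and it works in outline. The paper moves only the singular component. It differentiates $E(v_t)$ directly on $\mathcal{R}(u)$, a set of full measure, using the chain-rule identities for $v_t=v\circ F_t$, and otherwise follows \cite[Lemma 53]{dm}. The first variation is then linear in $\xi\,\partial_w v$ and its gradient. So once the $\nabla\xi$ terms are integrated by parts, every error term carries a factor $\partial_w v$. First-order bounds $|G-G_0|+|\partial(G-G_0)|\le C\,d(v,P_0)$ then already give $C|w|\,\xi\, d(v,P_0)|\nabla v|$.

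You start instead from the exact identity $\frac{d}{dt}E^G(u\circ F_t)\big|_{t=0}=0$, which needs no regularity, and push the problem onto $E^H(V\circ F_t)$ and $\mathcal{E}(u\circ F_t)$. The cost is that these two pieces are paired with $\partial_w V$, which is not small. Their $O(d(v,P_0))$ parts cancel only in the sum, so bounding them separately requires each to be quadratically small. Concretely, you need $\tau_H(V)=O(d^2+d|\nabla v|)$; your first description, ``$C\,d(v,P_0)|\nabla u|$-type'', would not suffice. This uses the Fermi-type structure of \cite[(29)--(33)]{dm}: $G-G_0=O(d^2)$, its derivatives in the $V$-directions are $O(d^2)$, and only its $v$-derivatives are $O(d)$. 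State this explicitly, since it is exactly where your decomposition asks more of the metric estimates than the direct computation does.

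For the same reason, ``the metric error would be handled by the Lipschitz bound on $u$'' is too quick. Bounding $\xi\,\partial_w[(G-G_0)(u)]\,|\nabla u|^2$ crudely with $|\partial(G-G_0)|\le Cd$ gives only $C|w|\xi d$, which is not of the required form. What works is the following:
\begin{enumerate}
\item Integrate by parts.
\item Observe that the second-derivative terms in the divergence of $(G-G_0)_{\alpha\beta}(u)\bigl(\partial_i u^\alpha\partial_j u^\beta-\frac12\delta_{ij}\nabla u^\alpha\cdot\nabla u^\beta\bigr)$ cancel by symmetry of $G-G_0$.
\item Control the leftover term $(G-G_0)(u)\,\Delta u\,\partial_w u$ with the harmonic map equation on $\mathcal{R}(u)$.
\item Use that in the chain-rule terms $\partial_v(G-G_0)$ is always multiplied by $\nabla v$.
\end{enumerate}

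Like the paper, you integrate by parts quantities involving second derivatives that exist only on $\mathcal{R}(u)$. You should justify that $\mathcal{S}(u)$ contributes nothing, for example by cutoffs exploiting $\mathcal{H}^{n-1}(\mathcal{S}(u))=0$ (from the codimension-two bound in \cite{dm}) together with the Lipschitz bound on $u$. Your first-variation formulas also have the wrong sign for $f\circ F_t$; this is harmless since only absolute bounds are used.
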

\begin{proof}
    The proof follows the proof of \cite[Lemma 53]{dm}, except we may use the fact that $\calS(u)$ has measure zero, and we use the following identities for $x\in \calR(u)$:
    \begin{equation*}
        \evalat{\der{t}}{t=0}v^i_t=\evalat{\pder[v_t^i]{y^{\alpha}}\pder[y^{\alpha}]{t}}{t=0}=\pder[v_t^i]{x^{\alpha}}\xi w^{\alpha},
    \end{equation*}
    \begin{equation*}
        \pder[v_t^i]{x^{\beta}}=\pder[v_t^i]{y^{\gamma}}\pder[y^{\gamma}]{x^{\beta}}=\pder[v_t^i]{y^{\gamma}}(\delta_{\beta}^{\gamma}+t\pder[\xi]{x^{\beta}}w^{\gamma})
    \end{equation*}
    \begin{equation*}
        \evalat{\der{t}}{t=0}\pder[v_t^i]{x^{\beta}}=\smpder[v^i]{x^{\delta}}{x^{\beta}}\xi w^{\delta}+\pder[v^i]{x^{\gamma}}\pder[\xi]{x^{\beta}}w^{\gamma}
    \end{equation*}
    where $y(x)=F_t(x)$.
\end{proof}
\begin{lemma}\label{comparefreq}
    There exist $(C,R_1)(\Lambda,n,M,\sigma_j,X)>0$ with the following property: If $r\leq R_1$ and $x_1,x_2\in B_{r/8}(0)$, then for all $y,z$ on the line segment between $x_1$ and $x_2$, we have
    \begin{equation}\label{eq:pinchingbound}
    |\BDOrd_{\phi,v}(y,r)-\BDOrd_{\phi,v}(z,r)|\leq C(W(x_1)+W(x_2)+r)^{1/2}\frac{|y-z|}{|x_1-x_2|}.
\end{equation}
\end{lemma}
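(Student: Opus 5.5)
The idea is to follow the argument of \cite[Lemma 4.2]{dees} (going back to \cite{dmsv}), which compares smoothed frequencies at different centers. Since $\BDOrd_{\phi,v}(\cdot,r)$ is a function of the center, it suffices to bound its directional derivative along $e:=(x_2-x_1)/|x_1-x_2|$ at every point $y$ of the segment:
\[
|\partial_e \BDOrd_{\phi,v}(y,r)|\leq \frac{C}{|x_1-x_2|}\,(W(x_1)+W(x_2)+r)^{1/2}.
\]
Integrating this from $z$ to $y$ then gives \eqref{eq:pinchingbound}. Here $W(x_i)$ denotes a pinching of the form $W^{2r}_{r/8}(x_i,v)$ or similar; the precise scales are chosen so that Lemma~\ref{lemma:d5.3} applies. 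Write $\BDOrd=rE/I$. Then
\[
\partial_e\BDOrd=\frac{r\,\partial_eE_{\phi,v}}{I_{\phi,v}}-\BDOrd\,\frac{\partial_eI_{\phi,v}}{I_{\phi,v}},
\]
so the task is to compute $\partial_e E_{\phi,v}(y,r)$ and $\partial_e I_{\phi,v}(y,r)$ and show that they combine into an expression controlled by $\zeta$-type quantities.

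For $\partial_eI_{\phi,v}$, differentiate under the integral. Moving the derivative off $\phi'$ and onto $d^2(v,P_0)$ gives
\[
\partial_e I_{\phi,v}(y,r)=-\int |x-y|^{-1}\phi'\bigl(\tfrac{|x-y|}{r}\bigr)\,2\langle v,\partial_ev\rangle\,dx,
\]
using the embedding of the conical $F$-connected target in $\RR^N$, as in \eqref{eq:2}. For $\partial_eE_{\phi,v}$, use the domain variation of Lemma~\ref{domainvariation} with $w=e$ and $\xi=\phi(|x-y|/r)$, which can be mollified. For a genuinely harmonic map the first variation would give the exact identity
\[
\partial_eE_{\phi,v}=-\frac{2}{r}\int\phi'\bigl(\tfrac{|x-y|}{r}\bigr)\,\partial_rv\cdot\partial_ev .
\]
Here \eqref{eq:wvar} together with the same statement applied with $-w$ yields this identity up to an error
\[
C\int\phi\,(d^2(v,P_0)+|\nabla v|^2)\leq C\bigl(rI_{\phi,v}(y,2r)/r+E_{\phi,v}(y,2r)\bigr).
\]
I expect this to be the main obstacle. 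Lemma~\ref{domainvariation} is only a one-sided $\limsup$, so I would apply it to both $\pm w$ to get a two-sided bound on the variation. After dividing by $I_{\phi,v}$ and multiplying by $r$, the error becomes $O(r(\BDOrd+1))$. By Lemma~\ref{ineq:ord} and Lemma~\ref{monotoneheight} this is $O(r)\leq O(r^{1/2})$ for small $r\leq R_1$, and it is the source of the $+r$ inside the square root.

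Combining the two derivatives, the main term is
\[
\frac{2}{I_{\phi,v}}\int -|x-y|^{-1}\phi'\bigl(\tfrac{|x-y|}{r}\bigr)\,\partial_ev\cdot\bigl(|x-y|\partial_rv-\BDOrd_{\phi,v}(y,r)\,v\bigr)\,dx .
\]
The key trick from \cite{dmsv} handles this term. Write $(x_2-x_1)\cdot\nabla v$ in terms of radial derivatives about $x_1$ and $x_2$:
\[
(x_2-x_1)\cdot\nabla v(x)=\bigl((x-x_1)\cdot\nabla v-\alpha_1 v\bigr)-\bigl((x-x_2)\cdot\nabla v-\alpha_2v\bigr)+(\alpha_1-\alpha_2)v,
\]
with $\alpha_i=\BDOrd_{\phi,v}(x_i,\cdot)$. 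By Cauchy--Schwarz, the first two pieces are bounded by $(\int\zeta_{x_i})^{1/2}$ times $\bigl(\int -|x-y|^{-1}\phi'\,\bigl||x-y|\partial_rv-\BDOrd v\bigr|^2\bigr)^{1/2}\lesssim (rI\,\BDOrd)^{1/2}$. Lemma~\ref{lemma:d5.3} at scale $R\sim 2r$ around $x_i$ then gives $\int\zeta_{x_i}\leq CrI_{\phi,v}(x_i,2r)(W(x_i)+r^2)$; the annulus $\{r/2\leq|x-y|\leq r\}$ lies in $B_{R/2}(x_i)\setminus B_{\theta R}(x_i)$ for a fixed $\theta$ because $|x_i-y|\leq r/4$. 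The remaining piece $(\alpha_1-\alpha_2)v$ pairs with $|x-y|\partial_rv-\BDOrd v$. Its integral, up to $O(rI)$ errors coming from \eqref{eq:1} and \eqref{eq:4} (the frequency identity $\int -\phi'|x-y|^{-1}\langle v,|x-y|\partial_r v\rangle\approx \BDOrd\, I$), is $O(r I)$, so it also lands in the $+r$ error. Finally, compare all heights using Lemma~\ref{monotoneheight}, which gives $I_{\phi,v}(x_i,2r)\leq CI_{\phi,v}(y,r)$ after adjusting scales with \eqref{eqn:6} and \eqref{heightbound2}, and bound all orders by Lemma~\ref{ineq:ord}. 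The result is $|x_1-x_2|\,|\partial_e\BDOrd|\leq C(W(x_1)+W(x_2)+r)^{1/2}$, and integrating along the segment finishes the proof.
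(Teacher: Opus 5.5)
Your proposal is correct and follows the paper's proof essentially step for step. You differentiate $\BDOrd_{\phi,v}(\cdot,r)$ in the center using the domain variation of Lemma~\ref{domainvariation} and the formula for $\partial_w I_{\phi,v}$. You then split $(x_2-x_1)\cdot\nabla v$ into the two radial defects at $x_1,x_2$, controlled by Lemma~\ref{lemma:d5.3}, plus a multiple of $v$, handled by the almost-frequency identity from \eqref{eq:1} and \eqref{eq:9.5}, and integrate along the segment.

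The minor differences are these. You make the two-sidedness explicit via $\pm w$, which the paper uses tacitly. You also dispose of the $\BDOrd_{\phi,v}(x_1,r)-\BDOrd_{\phi,v}(x_2,r)$ term by the a priori bound of Lemma~\ref{ineq:ord} rather than the paper's absorption step.

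Two bookkeeping points need fixing. Take $\alpha_i=\BDOrd_{\phi,v}(x_i,r)$ at fixed radius; otherwise $\alpha_1-\alpha_2$ is not constant and you must add the radius-dependent remainder, bounded by $W(x_i)+Cr$ as in the paper's $\calE_4,\calE_5$. Also, the variation error is $O(1)$ for unit $e$, not $O(r)$; it becomes $O(r)$ only after multiplying by $|x_1-x_2|\le r/4$.
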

\begin{proof}
By computations similar to those in the proof of \cite[Lemma 3.2]{dees}, if we take $\xi=\phi(|x|/r)$, we have
\begin{eqnarray*}
    \evalat{\der{t}}{t=0}E(v_t)&=&\frac{1}{r}\int_{\Omega}2\xi'(|x|)\abrac{\partial_{\nu_x}v,\partial_wv}-|\nabla v|^2\xi'(|x|)\frac{x}{|x|}\cdot w \, dx\\
    &=&\partial_wE_{\phi,v}(0,r)+\frac{2}{r}\int_{\Omega}\phi'\paren{\frac{|x|}{r}}\abrac{\partial_{\nu_x}v,\partial_wv}.
\end{eqnarray*}
As in \cite{dees}, we have
\begin{equation*}
    \partial_wI_{\phi,v}(x,r)=-2\int_{\Omega}|y-x|^{-1}\phi'\paren{\frac{|x-y|}{r}}\abrac{v(y),\partial_w v(y)}dy
\end{equation*}
By \eqref{eq:wvar}, we have for $r\leq R_0$:
\begin{align*}
    \partial_w \BDOrd_{\phi,v}(x,r) =&\frac{r}{I_{\phi,v}(x,r)}\partial_{w}E_{\phi,v}(x,r)-
    \frac{\BDOrd_{\phi,v}(x,r)}{I_{\phi,v}(x,r)}\partial_{w}I_{\phi,v}(x,r)\\
    \leq&\frac{2}{I_{\phi,v}(x,r)}\int_{\Omega}\abrac{\partial_{\eta_x}v,\partial_w v}\, d\mu_x+\frac{Cr|w|}{I_{\phi,v}(x,r)} \int_{\Omega} \phi(\frac{|y-x|}{r})\bigl(d^2(v,P_0)+|\nabla v|^2\bigr)\, dy\\
    &-\frac{2\BDOrd_{\phi,v}(x,r)}{I_{\phi,v}(x,r)}\int_{\Omega}\abrac{v,\partial_w v}\, d\mu_x\\
    \leq&\frac{2}{I_{\phi,v}(x,r)}\paren{\int_{\Omega}\abrac{\partial_{\eta_x}v,\partial_{w}v}d\mu_x-\BDOrd_{\phi,v}(x,r)\int_{\Omega}\abrac{v,\partial_wv}d\mu_x}+2C|w|\BDOrd_{\phi,v}(x,r)\\
    &+\frac{2Cr|w|}{I_{\phi,v}(x,r)}\int_{\Omega}d^2(v(y),P_0)\phi\paren{\frac{|x-y|}{r}}dy\\
    \leq& \underbrace{\frac{2}{I_{\phi,v}(x,r)}\int_{\Omega}\abrac{\partial_{\eta_x}v(y)-\BDOrd_{\phi,v}(x,r)v(y),\partial_wv(y)}d\mu_x}_{:=\calM}+2C|w|\BDOrd_{\phi,v}(x,r)+2Cr^2|w|,
\end{align*}
where $d\mu_x(y):=-|y-x|^{-1}\phi'(\frac{|y-x|}{r})\, dy.$ To derive the last inequality, we bound $\int_0^r I^v(x,\sigma)\phi(\frac{\sigma}{r})\, d\sigma,$ which is equivalent to $\int_{\Omega}d^2(v(y),P_0)\phi(\frac{|y-x|}{r})\, dy.$ By the monotonicity of $I^v(x,\sigma),$ 
\[
I_{\phi,v}(x,r) = 2\int_{r/2}^r \frac{I^v(x,\sigma)}{\sigma}\, d\sigma \geq 2I^v(x,r/2)\int_{r/2}^r\frac{d\sigma}{\sigma}=2\ln2 I^v(x,r/2) \text{ and } \] \[\int_0^{r/2} I^v(x,\sigma) \, d\sigma \leq \frac{r}{2}I^v(x,r/2),
\]
which follows that $$\int_0^{r/2}I^v(x,\sigma)\, d\sigma \leq \frac{r}{4\ln2}I_{\phi,v}(x,r).$$
Combining with $\int_{r/2}^r I^v(x,\sigma)\phi(\frac{\sigma}{r}) \, d\sigma \leq\frac{r}{2}I_{\phi,v}(x,r),$ we finish bounding.

Now we take $w=x_1-x_2$ for $x_1,x_2\in B_{r/8}(0)$ with $|w|=|x_1-x_2|$. For now we will assume $4r\leq R_0/16$. We aim to bound $\calM$ using similar computations as in the proof of \cite[Theorem 5.3]{dees}. The changes we make are as follows. We estimate $\calE_4(z):=\BDOrd_{\phi,v}(x_1,|z-x_1|)-\BDOrd_{\phi,v}(x_1,r)$ and $\calE_5(z):= \BDOrd_{\phi,v}(x_2,|z-x_2|)-\BDOrd_{\phi,v}(x_2,r)$ for all $z\in\supp(\mu_x)$, for all $x$ on the line segment between $x_1$ and $x_2$,
\begin{align*}
    |\calE_4(x)|&=|\BDOrd_{\phi,v}(x_1,r)-\BDOrd_{\phi,v}(x_1,|z-x_1|)|=|e^{-Ar}(W_{|z-x_1|}^r(x_1)+e^{A|z-x_1|}(\BDOrd_{\phi,v}(x_1,|z-x_1|)\\
    &+A|z-x_1|^2))-Ar^2-\BDOrd_{\phi,v}(x_1,|z-x_1|)|\\
    &\leq W_{|z-x_1|}^r(x_1)+(1-e^{-Ar})\BDOrd_{\phi,v}(x_1,|z-x_1|)+Ar^2\leq W(x_1)+Cr,
\end{align*}
where $W(x):=W^{4r}_{r/8}(x)$ and $C$ denotes a generic constant depending only on $(n,\sigma_j,M,Y_C,\Lambda)$ and similarly
\begin{equation*}
    |\calE_5(x)|\leq W(x_2)+Cr.
\end{equation*}
By \eqref{eq:1} and \eqref{eq:9.5}, we have for $\calE:=\BDOrd_{\phi,v}(x_1,r)-\BDOrd_{\phi,v}(x_2,r),$
\begin{align*}
    \abs{\int\calE
    \abrac{v,\partial_{\eta_x}v}d\mu_x-\BDOrd_{\phi,v}(x,r)\int\calE d^2(v,0_x)d\mu_x}&=\abs{\calE}\abs{\int-\phi'\paren{\frac{|x-y|}{r}}\abrac{\partial_{\nu_x}v,v}-rE_{\phi,v}(x,r)}\\
    &\leq C|\calE|r^2I_{\phi,v}(x,r).
\end{align*}
In \cite{dees}, the LHS is simply 0. This all gives for $\calE_3(z):=\calE+\calE_4(z)-\calE_5(z),$
\begin{align*}
    \left|(C)\right|&:=\left|2I_{\phi,v}(x,r)^{-1}\left[\int \abrac{\calE_3 v, \partial_{\eta_x}v} \, d\mu_x -\BDOrd_{\phi,v}(x,r) \int \calE_3 |v|^2 \, d\mu_x \right]\right|\\
    &\leq Cr^2|\calE|+(W(x_
    1)+W(x_2)+Cr)2I_{\phi,v}(x,r)^{-1}\paren{\int|v||\nabla v|d\mu_x+\BDOrd_{\phi,v}(x,r)\int |v|^2d\mu_x}\\
    &=Cr^2|\calE|+(W(x_1)+W(x_2)+Cr)2I_{\phi,v}(x,r)^{-1}\paren{\int|v||\nabla v|d\mu_x+rE_{\phi,v}(x,r)}\\
    &\leq Cr^2|\calE|+(W(x_1)+W(x_2)+Cr)2I_{\phi,v}(x,r)^{-1}\paren{\int|v|^2/2+|\nabla v|^2/2d\mu_x+rE_{\phi,v}(x,r)}\\
    &\leq Cr^2|\calE|+(W(x_1)+W(x_2)+Cr)2I_{\phi,v}(x,r)^{-1}(1/2I_{\phi,v}(x,r)+(1/2+r)E_{\phi,v}(x,2r))\\
    &\leq Cr^2|\calE|+(W(x_1)+W(x_2)+Cr)(1+CI_{\phi,v}(x,r)^{-1}E_{\phi,v}(x,2r)).
\end{align*}
By Lemma \ref{ineq:ord} and Equation \eqref{eqn:6}, we have
\begin{equation*}
    rI_{\phi,v}(x,r)^{-1}E_{\phi,v}(x,2r)\leq CI_{\phi,v}(x,r)^{-1}I_{\phi,v}(x,2r)\leq C.
\end{equation*}
Thus, we have
\begin{equation*}
    \left|(C)\right|\leq C(r^2|\calE|+W(x_1)+W(x_2)+r).
\end{equation*}
Define $\calE_{\ell}(z):=\partial_{\eta_{x_{\ell}}}v-\BDOrd_{\phi,v}(x_{\ell},|z-x_{\ell}|)v(z)$ for $\ell=1,2.$ We derive the same bounds for $|(A)|=\left|2I_{\phi,v}(x,r)^{-1}\int \abrac{(\calE_1-\calE_2),\partial_{\eta_x}v} \, d\mu_x \right|$ and $|(B)|=\left|\frac{2rE_{\phi,v}(x,r)}{I_{\phi,v}(x,r)^2}\int \abrac{(\calE_1-\calE_2),v} \, d\mu_x \right|$ as in the proof of \cite[Theorem 5.2]{dees} by the same methods. Result (\ref{eq:zetabound}) by letting $R=4r$ and $\theta=\frac{1}{16}$ gives us
\begin{equation*}
    \int |\calE_{\ell}|^2d\mu_x\leq \frac{4}{r}\int_{B_{2r}(x_{\ell})\backslash B_{r/4}(x_{\ell})}|\calE_{\ell}|^2dy\leq C(W(x_{\ell})+r^2)\leq C(W(x_{\ell})+r).
\end{equation*}
Thus, we have
\begin{equation*}
    |\partial_w\BDOrd_{\phi,v}(x,r)|\leq C(r^2|\calE|+r+W(x_1)+W(x_2)+(W(x_1)+W(x_2)+r)^{1/2}).
\end{equation*}
By Lemma \ref{ineq:ord}, we have $(W(x_1)+W(x_2)+r)^{1/2}\leq C$, so we finally have
\begin{equation}\label{eq:6}
    |\partial_w\BDOrd_{\phi,v}(x,r)|\leq C(r^2|\calE|+(W(x_1)+W(x_2)+r)^{1/2}).
\end{equation}
Integrating \eqref{eq:6} along the line segment between $x_1$ and $x_2$, we have
\begin{equation*}
    |\calE|\leq C(r^2|\calE|+r+(W(x_1)+W(x_2)+r)^{1/2}).
\end{equation*}
For $r\leq \min\{R_0/64,1/\sqrt{2C}\}:=R_1$, we may subtract $Cr^2|\calE|$ from both sides and multiply both sides by 2 to get
\begin{equation*}
    |\calE|\leq C(r+(W(x_1)+W(x_2)+r)^{1/2}).
\end{equation*}
Putting this into \eqref{eq:6}, we get
\begin{equation*}
    |\partial_w\BDOrd_{\phi,v}(x,r)|\leq C(W(x_1)+W(x_2)+r)^{1/2}.
\end{equation*}
Integrating along the line segment from $y$ to $z$, we get
\begin{equation*}
    |\BDOrd_{\phi,v}(y,r)-\BDOrd_{\phi,v}(z,r)|\leq C(W(x_1)+W(x_2)+r)^{1/2}\frac{|y-z|}{|x_1-x_2|}.
\end{equation*}
\end{proof}
\begin{lemma}\label{compactness}
    Suppose that $\{u_i\}_{i=1}^{\infty}=\{(V_i,v_i):B_{\sigma_j}(0)\to \RR^j\times Y_C^{N-j}\}_{i=1}^{\infty}$ are harmonic maps with $\sup_{B_{\sigma_j}(0)}|\nabla u_i| \leq M$ and $\Ord^{v_i}(0,R_0)\leq \Lambda$. Suppose $\{\sigma_i\}_{i=1}^{\infty}$ are such that $0<\sigma_i<R_0$ and $\sigma_i\to 0$ as $i\to \infty$ and $v_i(x_i)=O_Y$ for $x_i\in B_{\sigma_i/64}(0)$. Let $\tilde{v}_i:B_1(0)\to (Y_C,d_i=\lambda_i^{-1}d_{Y_C})$ be such that $\tilde{v}_i(x)=v_i(\sigma_ix)$, where $\lambda_i=(\sigma_i^{1-n}I_{\phi,v_i}(0,\sigma_i))^{1/2}$. Then there exists a nonconstant harmonic map $v_{\infty}:{B_{1/16}(0)}\to (Y_{\infty},d_{\infty})=(Y_C,d_{Y_C})$ such that $\tilde{v}_i|_{B_{1/16}(0)}$ converges to $v_{\infty}$ uniformly in the pullback sense and the (directional) energy densities of the $\tilde{v}_i$'s converge weakly to those of $v_{\infty}$ on $B_{1/64}(0)$.
\end{lemma}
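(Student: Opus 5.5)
The plan is to run the standard Gromov--Schoen / Korevaar--Schoen compactness argument on the rescaled maps, using the smoothed-height normalization in place of the sharp one. The first step is a uniform energy bound and a uniform Lipschitz bound for the $\tilde v_i$.

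\textbf{Uniform bounds.} By Lemma \ref{ineq:ord} and the assumption $\Ord^{v_i}(0,R_0)\leq\Lambda$ (which, via \eqref{eq:12}, \eqref{eq:13} and \eqref{eqn:6}, controls $\Ord_{\phi,v_i}(0,R_0)$ up to constants), the smoothed orders $\Ord_{\phi,v_i}(0,r)$ are bounded uniformly in $i$ for $r\leq R_0$. By construction $\sigma^{1-n}\lambda_i^{-2}I_{\phi,v_i}(0,\sigma_i)=1$ after rescaling. Hence the rescaled smoothed energy is
\[
E_{\phi,\tilde v_i}(0,1)=\sigma_i^{2-n}\lambda_i^{-2}E_{\phi,v_i}(0,\sigma_i)=\BDOrd_{\phi,v_i}(0,\sigma_i)\leq C(\Lambda).
\]
Since $\phi\equiv1$ on $[0,1/2]$, the energy of $\tilde v_i$ on $B_{1/2}(0)$ is bounded uniformly. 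The rescaled maps are still minimizers up to the asymptotic-product error: the component $(\tilde V_i,\tilde v_i)$ minimizes for the rescaled metric $G_i(V,v)=G(\sigma_i V,\lambda_i v)$ suitably normalized, and $G_i\to G_0$ by the assumption that $G$ is asymptotically isometric to $G_0$ as $v\to P_0$. The Lipschitz estimates of \cite{dm} (the analogue of \cite[Theorem 2.4.6]{ks1} for $v$, cf.\ the proof of \cite[Proposition 53]{dm} and \cite[Remark 55]{dm}) then give $\sup_{B_{1/8}}|\nabla\tilde v_i|\leq C$. Because $\tilde v_i(\tilde x_i)=O_Y$ with $\tilde x_i=x_i/\sigma_i\in B_{1/64}(0)$, the images stay in a bounded neighbourhood of the cone point.

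\textbf{Extraction of the limit.} Because $Y_C$ is conical, the spaces $(Y_C,\lambda_i^{-1}d)$ are all isometric to $(Y_C,d)$ via $R_{O_Y,\mu}$, so the limit target is again $(Y_C,d_{Y_C})$. Arzel\`a--Ascoli, or \cite[Section 3]{ks2}-type pullback convergence, gives a subsequence converging uniformly on $B_{1/16}(0)$ to a Lipschitz $v_\infty$. To show $v_\infty$ is harmonic I would use the fact that the defect from minimality of $\tilde v_i$ is controlled by $C(\sigma_i^2+\sup d(v_i,P_0))\to0$, coming from the constants in \cite[(29)--(33)]{dm}. Korevaar--Schoen lower semicontinuity of energy plus the standard gluing competitor argument (\cite[Theorem 3.11]{ks2}) then shows $v_\infty$ is energy minimizing, and that $E^{\tilde v_i}\to E^{v_\infty}$ on smaller balls. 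Energy convergence on every subball, combined with weak convergence of the directional energies $|(\tilde v_i)_*(Z)|^2$ and the parallelogram/polarization identity for NPC targets, yields weak convergence of all directional energy densities on $B_{1/64}(0)$.

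\textbf{Nonconstancy.} This is the step I expect to be the main obstacle. Uniform convergence gives $I_{\phi,v_\infty}(0,1/16)=\lim I_{\phi,\tilde v_i}(0,1/16)$, so it suffices to bound the latter from below uniformly in $i$. For this I would use \eqref{heightbound2} and \eqref{eqn:6} to compare $I_{\phi,\tilde v_i}(0,1/16)$ with $I_{\phi,\tilde v_i}(0,1)=1$, using the uniform order bound: the ratio is bounded below by $c(\Lambda)>0$. One must also make sure the base point is harmless, since the height is measured from $O_Y=\tilde v_i(\tilde x_i)$ rather than $\tilde v_i(0)$. I would handle this with Lemma \ref{monotoneheight} and the smallness $|\tilde x_i|<1/64$, centering the estimates at $\tilde x_i$. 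Then $v_\infty$ is nonconstant, since $v_\infty(\lim\tilde x_i)=O_Y$ while its smoothed height is positive.
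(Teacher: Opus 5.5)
\textbf{There is a genuine gap: the non-harmonicity of $v_i$ is never handled.} Several parts of your outline are fine and agree with the paper: the normalization computations $I_{\phi,\tilde v_i}(0,1)=1$ and $E_{\phi,\tilde v_i}(0,1)=\BDOrd_{\phi,v_i}(0,\sigma_i)$, the cone dilation used to fix the target, and nonconstancy via a lower bound on $I_{\phi,\tilde v_i}(0,1/16)$ from \eqref{eqn:6}. Your remark that $v_\infty$ attains $O_Y$ at $\lim\sigma_i^{-1}x_i$ makes that last step more explicit than the paper does. The problem is that both estimates your argument rests on are asserted rather than derived.

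(a) The bound $\sup_{B_{1/8}}|\nabla\tilde v_i|\le C$ does not follow from anything available.
\begin{itemize}
\item The hypothesis $|\nabla u_i|\le M$ rescales to $M\sigma_i/\lambda_i$. This is unbounded once the frequency exceeds $1$, since then $\lambda_i\ll\sigma_i$.
\item The Korevaar--Schoen interior estimate applies to $u_i$, not to $v_i$. The $V_i$-energy on $B_{\sigma_i}$, of size up to $M^2\sigma_i^n$, is not small compared with the $v_i$-energy $\lambda_i^2\sigma_i^{n-2}$.
\item \cite[Proposition 53]{dm} (monotonicity) and \cite[Remark 55]{dm} (tangent maps of a single $v$ at a fixed point) state no such bound. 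They certainly do not give one uniformly along a sequence of different maps with moving centers $x_i$.
\end{itemize}
Without equicontinuity, Arzel\`a--Ascoli cannot be applied.

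(b) The almost-minimality of $\tilde v_i$, with normalized defect tending to zero, is not established, and the heuristic ``$G_i\to G_0$'' is not meaningful as written. In your rescaled metric $G(\sigma_iV,\lambda_iv)$, dividing by $\lambda_i^2$ leaves the $VV$-block with weight $\sigma_i^2/\lambda_i^2$, so there is no limiting product problem. What you would actually need is a quantitative bound, relative to the $v$-energy and uniform over competitors, on how the coupling terms depend on $v$. You would then also have to re-prove \cite[Theorem 3.11]{ks2}, which uses exact minimality.

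\textbf{The missing idea is harmonic replacement.} Let $w_i$ be the harmonic map on $B_{1/8}(\sigma_i^{-1}x_i)$ with the boundary values of $\hat v_i=\lambda_i^{-1}v_i(\sigma_i\,\cdot)$. For $w_i$ everything is standard:
\begin{itemize}
\item $E^{w_i}\le E^{\hat v_i}(\sigma_i^{-1}x_i,1/8)\le\BDOrd_{\phi,v_i}(0,\sigma_i)\le\Lambda$.
\item $\sup_{B_{1/16}(0)}d^2(w_i,O_Y)$ is bounded by subharmonicity and the two-sided bound on $I^{\hat v_i}(\sigma_i^{-1}x_i,1/8)$; this is where Lemma \ref{monotoneheight} and \eqref{eqn:6} enter.
\item The interior Lipschitz estimate gives equicontinuity.
\item Arzel\`a--Ascoli together with \cite[Theorem 3.11]{ks2} gives a harmonic limit $v_\infty$ with convergent energy densities.
\end{itemize}
The comparison estimates of \cite{dm} then transfer everything to $\hat v_i$:
\begin{itemize}
\item \cite[Proposition 48]{dm} gives $\sup_{B_{5/64}(0)}d^2(\hat v_i,w_i)\le C\sigma_i^2$, hence uniform convergence of $\hat v_i$.
\item The argument of \cite[(132)]{dm} gives $|E^{\hat v_i}-E^{w_i}|\le C\sigma_i$ on balls $B_{r_i}(\sigma_i^{-1}x_i)$. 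The radii $r_i$ are chosen so that the boundary energy $\int_{\partial B_{r_i\sigma_i}(x_i)}|\nabla v_i|^2$ is controlled.
\item Lower semicontinuity \cite[Lemma 3.8]{ks2} and a diagonal argument over these radii then give energy convergence on a ball containing $B_{1/64}(0)$. This yields convergence of the directional energy densities there.
\end{itemize}
The comparison with $w_i$ is exactly what supplies the two estimates your outline takes for granted.
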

\begin{proof}
Since $Y_C$ is conical, for each $i$ the dilation $\delta_i:(Y_C,d_i) \to (Y_C,d_{Y_C})$ by $\delta_i(p)=\lambda_i^{-1}p$ is an isometry. Define $\hat{v}_i:=\delta_i \circ \tilde{v}_i$ by $\hat{v}_i(x)=\lambda_i^{-1}v_i(\sigma_ix).$ We firstly prove compactness for the maps $\hat{v}_i$ in the fixed target $(Y_C,d_{Y_C}).$ After passing to the subsequence, the resulting uniform convergence is equivalent to the pullback convergence of the original maps $\tilde{v}_i:(B_1(0),g_{\sigma_i}) \to (Y_C,d_i=\lambda_i^{-1}d_{Y_C})$ since $\tilde{v}_i$ and $\hat{v}_i$ have the same energy and directional energy density measures via the isometry.

    By Equation \eqref{eqn:6}, Lemma \ref{monotoneheight}, and the fact that $(I^{v_i})'(x,\sigma)>0$ for all $\sigma<R_0$, we have \[I_{\phi,v_i}(0,\sigma_i)\leq CI_{\phi,v_i}(0,\sigma_i/64)\leq CI_{\phi,v_i}(x_i,\sigma_i/16)\leq CI^{v_i}(x_i,\sigma_i/8).\] On the other hand, we have \[I^{v_i}(x_i,\sigma_i/8)\leq CI_{\phi,v_i}(x_i,\sigma_i/4)\leq CI_{\phi,v_i}(0,\sigma_i).\] Thus, there is a constant $C$ independent of $i$ such that $C^{-1}\leq I^{\hat{v}_i}(\sigma_i^{-1}x_i,1/8)\leq C$. Also, we have 
    \[E^{\hat{v}_i}(\sigma_i^{-1}x_i,1/8)=\int_{B_{1/8}(\sigma_i^{-1}x_i)}|\nabla \hat{v}_i|^2 \leq \int_{B_1(0)}\phi(|y|)|\nabla \hat{v}_i|^2 = E_{\phi,\hat{v}_i}(0,1)=\BDOrd_{\phi,v_i}(0,\sigma_i)\leq \Lambda.\] 
    Let $w_i:B_{1/8}(\sigma^{-1}_ix_i)\to Y_C$ be the harmonic map with $w_i|_{\partial B_{1/8}(\sigma_i^{-1}x_i)}=\hat{v}_i|_{\partial B_{1/8}(\sigma_i^{-1}x_i)}$. Then, we have \[E^{w_i}(\sigma_i^{-1}x_i,1/16)\leq E^{w_i}(\sigma_i^{-1}x_i,1/8)\leq E^{\hat{v}_i}(\sigma^{-1}x_i,1/8)\leq \Lambda\] and $I^{w_i}(\sigma_i^{-1}x_i,1/8)=I^{\hat{v}_i}(\sigma_i^{-1}x_i,1/8)$. By subharmonicity of $d^2(w_i,O_Y)$, there exists $C$ such that \[\sup_{B_{1/16}(0)}d^2(w_i,O_Y)\leq CI^{w_i}(\sigma_i^{-1}x_i,1/8)=CI^{\hat{v}_i}(\sigma_i^{-1}x_i,1/8)\leq C.\] Also, there exists $C$ such that $\sup_{B_{1/16}(0)}|\nabla w_i|^2\leq \sup_{B_{1/8}(\sigma_i^{-1}x_i)}|\nabla w_i|^2\leq C$. Thus, by the Arzel\'{a}-Ascoli compactness theorem, there is an unrelabeled subsequence such that $w_i|_{B_{1/16}(0)}$ converges uniformly to a harmonic map $v_{\infty}:B_{1/16}(0)\to Y_C$. By \cite[Proposition 48]{dm}, there exists $C$ independent of $i$ such that \[\sup_{B_{1/16}(0)}d^2(\hat{v}_i(x),w_i(x))\leq\sup_{B_{5/64}(0)}d^2(\hat{v}_i(x),w_i(x))\leq  C\sigma_i^2I^{\hat{v}_i}(\sigma_i^{-1}x_i,1/8)\leq C\sigma_i^2,\] so $\hat{v}_i\to v_{\infty}$ uniformly on $B_{1/16}(0)$. By $\eqref{eqn:6}$, we have $I_{\phi,\hat{v}_i}(0,1/16)\geq C$ for some $C$ independent of $i$, so by uniform convergence of the $\hat{v}_i's$ to $v_{\infty}$, we also have $I_{\phi,v_{\infty}}(0,1/16)\geq C$, so $v_{\infty}$ is nonconstant. By \cite[Theorem 3.11]{ks2}, the (directional) energy densities of the $w_i$'s converge weakly on $B_{1/16}(0)$ to those of $v_{\infty}$. By \cite[Lemma 3.8]{ks2}, for any smooth vector field $Z$ on $B_{1/16}(0)$ and any $f\in C_c^{\infty}(B_{1/16}(0))$, we have \[\int_{B_{1/16}(0)}f|(v_{\infty})_*(Z)|^2d\mu\leq\liminf_{i\to\infty}\int_{B_{1/16}(0)}f|(\hat{v}_i)_*(Z)|^2d\mu\] and \[\int_{B_{1/16}(0)}f|\nabla v_{\infty}|^2d\mu\leq \liminf_{i\to\infty}\int_{B_{1/16}(0)}f|\nabla \hat{v}_i|^2d\mu.\] 

    It remains to be shown that there is a subsequence $\{\hat{v}_{i_j}\}_{j\in \NN}$ such that the (directional) energy densities of the $\hat{v}_{i_j}$'s converge to those of $v_{\infty}$ on $B_{1/64}(0)$. Set $i_{j,1}=j$ for all $j\in \NN$. Let $2\leq m\in \NN$ be given, and suppose we have defined the sequence $\{i_{j,m-1}\}_{j\in \NN}$. For each $i\in \NN$ there is some $r_i\in (1/32(1+1/m),1/32(1+2/m))$ such that $\int_{\partial B_{r_i\sigma_i}(x_i)}|\nabla v_i|^2d\Sigma\leq \frac{32m}{\sigma_i}E^{v_i}(x_i,\sigma_i/8)$. Following the proof of Inequality \cite[(132)]{dm}, there exists $C>0$ independent of $i$ such that $|E^{\hat{v}_i}(\sigma_i^{-1}x_i,r_i)-E^{w_i}(\sigma_i^{-1}x_i,r_i)|\leq C\sigma_i$. Then, there exists a subsequence $\{i_{j,m}\}_{j\in \NN}$ of $\{i_{j,m-1}\}_{j\in \NN}$ such that $r_{i_{j,m}}\to r_{0,m}\in [1/32(1+1/m),1/32(1+2/m)]$ as $j\to\infty$. Then, we have 
    \begin{align*}
        E^{v_{\infty}}(x_0,1/32(1+2/m))\geq& E^{v_{\infty}}(x_0,r_{0,m})=\lim_{j\to\infty}E^{w_{i_{j,m}}}(x_0,r_{0,m}).
    \end{align*}
    Since there exists $L$ such that $\sup_{B_{1/16}(\sigma_i^{-1}x_i)}|\nabla w_i|\leq L$ for all $i\in \NN$, we have \[\lim_{j\to\infty}E^{w_{i_{j,m}}}(x_0,r_{0,m})-\lim_{j \to \infty} E^{w_{i_{j,m}}}(\sigma^{-1}_ix_i,r_{i_{j,m}})=0.\] Therefore, we have
    \begin{align*}
        E^{v_{\infty}}(x_0,1/32(1+2/m))&\geq \lim_{j\to\infty}E^{w_{i_{j,m}}}(\sigma^{-1}_{i_{j,m}}x_{i_{j,m}},r_{i_{j,m}})=\lim_{j\to\infty}E^{\hat{v}_{i_{j,m}}}(\sigma^{-1}_{i_{j,m}}x_{i_{j,m}},r_{i_{j,m}})\\
        &\geq \limsup_{j\to \infty}E^{\hat{v}_{i_{j,m}}}(x_0,1/32).
    \end{align*}
    For all $j\in \NN$, let $i_j=i_{j,j}$. Then, for all $m\in \NN$, $\{i_j\}_{j\geq m}$ is a subsequence of $\{i_{j,m}\}_{j\in \NN}$. Thus, for all $m\in \NN$ we have $E^{v_{\infty}}(x_0,1/32(1+2/m))\geq \limsup_{j\to\infty}E^{\hat{v}_{i_j}}(x_0,1/32)$. Sending $m\to\infty$, we have $\limsup_{j\to\infty}E^{\hat{v}_{i_j}}(x_0,1/32)\leq E^{v_{\infty}}(x_0,1/32)$. Combining this with lower semicontinuity of the energy, we have that $E^{v_{\infty}}(x_0,1/32)=\lim_{j\to\infty}E^{\hat{v}_{i_j}}(x_0,1/32)$, which implies the result since $B_{1/32}(x_0)\supset B_{1/64}(0)$.
\end{proof}

\section{$k$-homogeneity and pinching}\label{sec:k-hom}
In this section, we use following lemmas to show that if the frequency pinching is small at sufficiently many effectively spanning points, the singular set lies close to an affine subspace. The arguments use the sequence of blow-up maps of the singular component and reduce its pullback limit as a harmonic map into a conical $F$-connected complex in order to apply the results in \cite{dees} and \cite{bd}. We begin with the concept of ``effectively spanning a $k$-dimensional affine subspace'', which is important in our results since it is stable under blow-up limits.
\begin{definition}
   Let $\{x_0,...,x_k\} \in B_1(0) \subset \RR^n.$ We say that this set {\bf $\rho$-effectively spans} a $k$-dimensional affine subspace if for all $i=1,...,k,$
   \[
   x_i \notin B_{\rho}(x_0+\text{span}(x_1-x_0,...,x_{i-1}-x_0)).
   \]
   Given a set $F \subset B_1(0),$ we say that {\bf $F \ \rho$-effectively spans} a $k$-dimensional affine subspace if there exists a set $\{x_0,...,x_k\} \subset F$ that $\rho$-effectively spans a $k$-dimensional affine subspace.
\end{definition}

\begin{lemma}\label{bd5.8}
    Let $u$ be a harmonic map to an $N$-dimensional conical DM-complex $Y_C$ with splitting $u=(V,v):B_{\sigma_j}(0) \to (\mathbb{R}^j \times Y_C^{N-j},d_G)$ satisfying $\Ord_{\phi,v}(0,R_0)\leq \Lambda <\infty$ and $\sup_{B_{\sigma_j}(0)}|\nabla u| \leq M,$ where $R_0>0$ is as in Lemma \ref{monotoneorder}. Let $\rho \in (0, \frac{1}{4}]$ and $\eta>0,$ there exist $\sigma_{\ref{bd5.8}}, \delta_{\ref{bd5.8}}=(\sigma_{\ref{bd5.8}}, \delta_{\ref{bd5.8}})(n,\rho,X,\Lambda,M)>0$ such that for all $0<\sigma<\sigma_{\ref{bd5.8}}$ and $0<\delta<\delta_{\ref{bd5.8}}$, if $F=\{x \in B_{\sigma}(0): W^{2\sigma}_{\rho\sigma}(x,v)
    < \delta \} \ \sigma\rho$-effectively spans a $k-$dimensional subspace $V,$ then
    \[
    B_{2\sigma}(0) \cap \mathcal{S}^k_{0,\eta,\delta\sigma}(v) \subset B_{2\sigma\rho}(V).
    \]
\end{lemma}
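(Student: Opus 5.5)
We argue by contradiction and compactness, following \cite[Lemma 5.8]{bd} and \cite{dees}, using Lemma \ref{compactness} to pass to a limit that is a genuine harmonic map into a conical $F$-connected complex. Suppose the statement fails for some fixed $\rho,\eta$. Then there are harmonic maps $u_i=(V_i,v_i)$ satisfying the hypotheses, scales $\sigma_i\to 0$, thresholds $\delta_i\to 0$, and points $x_0^i,\dots,x_k^i\in B_{\sigma_i}(0)$. These points $\sigma_i\rho$-effectively span $k$-planes $V_i$ and satisfy $W^{2\sigma_i}_{\rho\sigma_i}(x_\ell^i,v_i)<\delta_i$. Finally, there are points $z_i\in B_{2\sigma_i}(0)\cap\calS^k_{0,\eta,\delta_i\sigma_i}(v_i)$ with $z_i\notin B_{2\sigma_i\rho}(V_i)$.

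\textbf{Step 1: rescaling and compactness.} Rescale by $\tilde v_i(x)=v_i(\sigma_i x)$ with target metric normalized by $\lambda_i=(\sigma_i^{1-n}I_{\phi,v_i}(0,\sigma_i))^{1/2}$. We need a slightly larger rescaling radius (say $c\sigma_i$) so that all relevant balls lie in the region where Lemma \ref{compactness} gives uniform convergence and convergence of energy densities. The order bound $\Lambda$ propagates to all base points by Lemma \ref{ineq:ord}, and the heights are comparable by Lemma \ref{monotoneheight} and \eqref{eqn:6}. Passing to a subsequence:
\begin{itemize}
\item $\tilde v_i\to v_\infty$, a nonconstant harmonic map into a conical $F$-connected complex;
\item the rescaled points converge: $\sigma_i^{-1}x_\ell^i\to x_\ell$, $\sigma_i^{-1}z_i\to z$, and $\sigma_i^{-1}V_i\to V$;
\item $\{x_\ell\}$ still $\rho$-effectively spans $V$, since effective spanning is a closed condition, and $\operatorname{dist}(z,V)\ge 2\rho$.
\end{itemize}

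\textbf{Step 2: zero pinching forces homogeneity in the limit.} The quantities $E_{\phi}$ and $I_{\phi}$ converge by uniform convergence and energy-density convergence. The correction terms $e^{Ar}$ and $Ar^2$ in $\Ord_{\phi,v}$ vanish after rescaling, since $r=\sigma_i s\to 0$. Hence the limit pinching $W^{2}_{\rho}(x_\ell,v_\infty)$ equals $0$. By the identity \eqref{eq:2}, or directly by Lemma \ref{lemma:d5.3} with the error $R^2\to 0$, $v_\infty$ is homogeneous about each $x_\ell$ on the annulus $B_{1}(x_\ell)\setminus B_{\rho}(x_\ell)$. Unique continuation for homogeneity, as in \cite{dees}, then gives homogeneity on all of $B_1(x_\ell)$. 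A map homogeneous about the $k+1$ effectively spanning points is invariant along $V-x_0$, so it is $k$-homogeneous about every point of $x_0+V$ (\cite[Section 5]{bd}).

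\textbf{Step 3: contradiction at $z$.} Since $z\notin B_{2\rho}(V)$ and $v_\infty$ is $k$-homogeneous with spine $V$, the blow-ups of $v_\infty$ at $z$ split an additional line. Then \cite[Lemma 5.6--5.7]{bd} applies: for $i$ large and some scale $s\in[\delta_i,1]$ (for instance a fixed small $s$), the map $\tilde v_i$ is $(\eta,s,k+1)$-homogeneous at $\sigma_i^{-1}z_i$. This is obtained via the comparison $\sup d(\tilde v_i,h)\to 0$ against a $(k+1)$-homogeneous approximant of $v_\infty$ near $z$, together with the uniform lower bound on $s^{1-n}I_{\phi}$ from \eqref{heightbound2}. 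It contradicts $z_i\in\calS^k_{0,\eta,\delta_i\sigma_i}(v_i)$.

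The case where $z$ is a point at which $v_\infty$ itself is only $k$-homogeneous needs care. The standard route is to note that the limit is homogeneous about $z$ too: the pinching at $z_i$ need not be small, but the dimension reduction of \cite{bd} handles it. If needed, we instead argue that $\mathcal S^k$-type points of a $k$-homogeneous map with spine $V$ must lie in $V$.

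\textbf{Main obstacle.} The main difficulty is Step 2: justifying that the smoothed pinching passes to the limit. This needs the energy density convergence of Lemma \ref{compactness} on a ball large enough to contain $B_2(x_\ell)$, and that the asymptotic non-harmonicity errors (the $A$, $C_3$ terms) scale out as $\sigma_i\to0$. We would handle it by choosing $\sigma_{\ref{bd5.8}}$ small relative to $R_0$ and applying Lemma \ref{compactness} at a scale $64\cdot 4\sigma_i$.
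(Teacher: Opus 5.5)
Your overall route matches the paper's: contradiction, blow-up via Lemma~\ref{compactness}, vanishing smoothed pinching in the limit giving homogeneity about the spanning points and then $k$-homogeneity with spine $V$ (\cite[Lemmas 5.4, 5.6]{bd}), then \cite[Lemma 5.7]{bd} at the limiting bad point, and transfer back to $\tilde v_i$ through convergence of $I_{\phi}$.

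\textbf{Gap in Step 3.} You never record that the bad points map to the cone point, $v_i(z_i)=O_Y$, and hence $v_\infty(z)=O_Y$. This is exactly the hypothesis \cite[Lemma 5.7]{bd} needs. Your assertion that ``since $z\notin B_{2\rho}(V)$ and $v_\infty$ is $k$-homogeneous with spine $V$, the blow-ups at $z$ split an additional line'' does not follow without it. Homogeneity about $x_0$ gives $v_\infty(x_0+t(z-x_0))=t^{\alpha}v_\infty(z)$. If $v_\infty(z)\neq O_Y$, this ray moves along a radial geodesic of the cone with nonzero speed. Then the tangent map at $z$ has a nonzero derivative in the direction $z-x_0$, and no extra invariant direction appears. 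Only when $v_\infty(z)=O_Y$ does the dilation about $x_0$ become, infinitesimally at $z$, a translation along $z-x_0$. That yields invariance under $V\oplus\mathbb{R}(z-x_0)$, which is $(k+1)$-dimensional because $z\notin V$.

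\textbf{How to repair it.} Points of $\mathcal{S}^k_0(v_i)\subset F_0(v_i)$ must map to $O_Y$, since away from the cone point the radial direction splits off a Euclidean factor locally. Alternatively, as the paper does, include $v_i(z_i)=O_Y$ among the contradiction hypotheses; uniform convergence then gives $v_\infty(z)=O_Y$. The same fact supplies the hypothesis of Lemma~\ref{compactness}, namely a point near the center mapped to $O_Y$, which your Step~1 never checks. With your rescaling radius $256\sigma_i$, the point $z_i\in B_{2\sigma_i}(0)$ serves.

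\textbf{Other corrections.} Drop your paragraph on ``the case that needs care''. No pinching control at $z_i$ is available or needed, and no dimension reduction is involved. Once $v_\infty(z)=O_Y$ is known, \cite[Lemma 5.7]{bd} gives a $(k+1)$-homogeneous tangent map at $z$. Hence there is a fixed $r_*$ with $v_\infty$ $(\eta/4,r_*,k+1)$-homogeneous at $z$. Convergence of $I_{\phi,\tilde v_i}$ then makes $\tilde v_i$ $(7\eta/8,r_*,k+1)$-homogeneous at the rescaled $z_i$ for large $i$, contradicting $z_i\in\mathcal{S}^k_{0,\eta,\delta_i\sigma_i}(v_i)$. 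Also, $\sup d(\tilde v_i,h)$ does not tend to $0$. It is bounded by a fixed fraction of $\eta$ times the normalized smoothed height at the fixed scale $r_*$, and that is all the argument uses.
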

\begin{proof}
    Suppose no such $\sigma,\delta>0$ exist. Then there exists a sequence of harmonic maps $\{u_i=(V_i,v_i): B_{\sigma_j}(0) \to \mathbb{R}^j\times Y_C^{N-j}\},$ where $Y_C^{N-j}$ is a conical $F$-connected complex (cf. \cite[Remark 23]{dm}) and $v_i$ is the non-harmonic singular component, such that $\Ord_{\phi,v_i}(0,R_0)\leq \Lambda$ and $\sup_{B_{\sigma_j}(0)}|\nabla u_i|\leq M$, and for $\sigma_i<\min \{\frac{1}{i}, R_0/192\}$,
    \begin{itemize}
        \item [(i)] there exists $\{x_0^i,...,x_k^i\} \subset B_{\sigma_i}(0) \ \rho\sigma_i$ -- effectively spanning an affine $k$-plane $V^i$
        \item [(ii)] $\Ord_{\phi,v_i}(x^i_j,2\sigma_i)-\Ord_{\phi,v_i}(x^i_j,\rho\sigma_i) <\frac{1}{i}$
        \item [(iii)] there exists $y^i \in \mathcal{S}^k_{0,\eta,\sigma_i/i}(v_i)\cap B_{2\sigma_i}(0)\setminus B_{2 \rho \sigma_i}(V^i)$ with $v_i(y_i)=O_Y.$
    \end{itemize}
    We have for all $i\in \NN$,
    {\small\begin{align*}
        |\BDOrd_{\phi,v_i}(x_j^i,2\sigma_i)-\BDOrd_{\phi,v_i}(x_j^i,\rho\sigma_i)|&=|e^{-2A\sigma_i}\Ord_{\phi,v_i}(x_j^i,2\sigma_i)-A(2\sigma_i)^2-e^{-A\rho\sigma_i}\Ord_{\phi,v_i}(x_j^i,\rho\sigma_i)+A(\rho\sigma_i)^2|\\
        &=|e^{-2A\sigma_i}W_{\rho\sigma_i}^{2\sigma_i}(x_j^i,v_i)+(e^{-2A)\sigma_i}-e^{A\rho\sigma_i})\Ord_{\phi,v_i}(x_j^i,\rho\sigma_i)-A\sigma_i^2(4-\rho^2)|\\
        &\leq W_{\rho\sigma_i}^{2\sigma_i}(x_j^i,v_i)+A(4-\rho^2)\sigma_i^2+A(2-\rho)\sigma_i\Ord_{\phi,v_i}(x_j^i,\rho\sigma_i).
    \end{align*}}
    By Lemma \ref{monotoneorder} and Lemma \ref{ineq:ord}, we have
    \begin{align*}
        \Ord_{\phi,v_i}(x_j^i,\rho\sigma_i)\leq \Ord_{\phi,v_i}(x_j^i,\sigma_i/4)\leq C_{4.2}(1+\Ord_{\phi,v_i}(0,4\sigma_i))\leq C_{4.2}(1+\Lambda).
    \end{align*}
    Thus, since $\sigma_i\to 0$ and $W_{\rho\sigma_i}^{2\sigma_i}(x_j^i,v_i)\to 0$ as $i\to \infty$, we have that $|\BDOrd_{\phi,v_i}(x_j^i,2\sigma_i)-\BDOrd_{\phi,v_i}(x_j^i,\rho\sigma_i)|\to 0$ as $i\to\infty$. Then the blow-up maps $\tilde{v}_i:B_1(0) \to (Y_C^{N-j},d_i:=\frac{1}{\lambda_i}d_{Y_C})$ where $\lambda_i=((192\sigma_i)^{1-n}I_{\phi,v_i}(0,192\sigma_i))^{\frac{1}{2}}$ and $\tilde{v}_i(x)=v_i(192\sigma_ix)$ satisfy:
    \begin{itemize}
    \item[(i)] $\{(192\sigma_i)^{-1}x_0^i,...,(192\sigma_i)^{-1}x^i_k\} \subset B_{1/192}(0) \ \rho/192$ -- effectively span an affine $k$-space $\sigma_i^{-1}V^i$
    \item[(ii)] $\BDOrd_{\phi, \tilde{v}_i}((192\sigma_i)^{-1}x^i_j,\frac{1}{96})-\BDOrd_{\phi,\tilde{v}_i}((192\sigma_i)^{-1}x^i_j,\frac{\rho}{192}) \to 0$ as $i\to\infty$
    \item[(iii)] there exists $(192\sigma_i)^{-1}y^i \in \mathcal{S}^k_{0,\eta,1/192i}(\tilde{v}_i) \cap B_{1/96}(0) \setminus B_{\rho/96}((192\sigma_i)^{-1}V^i)$ such that $\tilde{v}_i((192\sigma_i)^{-1}y^i)=O_Y.$
    \end{itemize}
    By Lemma \ref{compactness}, an unrelabeled subsequence of the $\tilde{v}_i's$ converges uniformly on $B_{1/64}(0)$ to a harmonic map $v_{\infty}$ such that the energy densities of the $\tilde{v}_i's$ converge weakly to the energy density of $v_{\infty}$. Then, taking a subsequence if necessary, the collection $\{(192\sigma_i)^{-1}x_0^i,...,(192\sigma_i)^{-1}x^i_k\}$ also converges to a collection $\{x_0,...,x_k\} \subset \overline{B_{1/192}(0)}$ which $\rho/192$-effectively spans an affine $k$-space $V^{\infty}.$ Since $B_{1/96}(x_j)\subset B_{1/64}(0)$, and \[|\cdot-(192\sigma_i)^{-1}x_j^i|^{-1}\phi'(|\cdot-(192\sigma_i)^{-1}x_j^i|/r)d^2(\tilde{v}_i,O_Y)\to |\cdot-x_j|^{-1}\phi'(|\cdot-x_j|/r)d_{\infty}^2(v_{\infty},O_Y)\] in $L^{\infty}(B_{1/64}(0))$ as $i\to\infty$, we have that \[\lim_{i\to\infty}I_{\phi,\tilde{v}_i}((192\sigma_i)^{-1}x_j^i,r)=I_{\phi,v_{\infty}}(x_j,r)\] for any $r\leq 1/96$. Also, since $\phi(|\cdot-(192\sigma_i)^{-1}x^i_j|/r)$ converges uniformly to $\phi(|\cdot-x_j|/r)$ as $j\to\infty$, we have $\lim_{i\to\infty}E_{\phi,\tilde{v}_i}((192\sigma_i^{-1})x_j^i,r)=E_{\phi,v_{\infty}}(x_j,r)$ for any $r\leq 1/96$. Therefore, we have $\BDOrd_{\phi,v_{\infty}}(x_j,\frac{1}{96})=\BDOrd_{\phi,v_{\infty}}(x_j,\rho/192)$ for any $j=0,...,k$ and, possibly after taking another subsequence, $(192\sigma_i)^{-1}y^i$ converges to some point $y$ not in $V^{\infty}$ such that $v_{\infty}(y)=O_Y.$ By \cite[Lemma 5.4]{bd}, $v_{\infty}$ is homogeneous of some degree $\alpha_j \geq 1$ on $B_{1/96}(x_j)$ for $j=0,...,k.$ From \cite[Lemma 5.6]{bd} we have that $\alpha_0=...=\alpha_k=\alpha \geq 1$ and $v_{\infty}$ is homogeneous of degree $\alpha$ about $x_j$ for $j=0,...,k.$ Thus, $v_{\infty}$ is $k$-homogeneous about a $k$-plane $V^{\infty}.$ Since $v_{\infty}(y)=O_Y$ and $y \notin V^{\infty},$ from \cite[Lemma 5.7]{bd}, any tangent map $v_*$ of $v_{\infty}$ at $y$ is $(k+1)$-homogeneous. In particular, there is some $r_*$ so that $v_{\infty}$ is $(\frac{\eta}{4},r_*,k+1)$-homogeneous at $y,$ which contradicts the following claim.
\begin{claim}
    For every $(k+1)$-homogeneous map $h,$
    \[
    \sup_{B_r(y)}d_{\infty}(v_{\infty}(x),h(x-y)) \geq \frac{\eta}{2}\left(\frac{I_{\phi,v_{\infty}}(y,r)}{r^{n-1}}\right)^{\frac{1}{2}},
    \]
    where $r:=r_*.$
\end{claim}
\begin{proof}
    Suppose on the contrary that for some $r>0,$ there exists a $(k+1)$-homogeneous map $h$ with
    \begin{equation}\label{5.8:eq1}
    \sup_{B_r(y)}d_{\infty}(v_{\infty}(x),h(x-y)) < \frac{\eta}{2}\left(\frac{I_{\phi,v_{\infty}}(y,r)}{r^{n-1}}\right)^{\frac{1}{2}}
    \end{equation}
    Since $\tilde{v}_i \to v_{\infty}$ uniformly, for each fixed $r>0, \ I_{\phi,\tilde{v}_i}((192\sigma_i)^{-1}y^i,r) \to I_{\phi,v_{\infty}}(y,r),$ so it follows that for $i$ sufficiently large,
    \begin{equation}\label{5.8:eq2}
    \frac{2}{3}\left(\frac{I_{\phi,v_{\infty}}(y,r)}{r^{n-1}}\right)^{\frac{1}{2}} \leq \left(\frac{I_{\phi,\tilde{v}_i}((192\sigma_i)^{-1}y^i,r)}{r^{n-1}}\right)^{\frac{1}{2}} \leq \frac{3}{2}\left(\frac{I_{\phi,v_{\infty}}(y,r)}{r^{n-1}}\right)^{\frac{1}{2}}.
    \end{equation}
    Since $h^{(192\sigma_i)^{-1}y^i}_i:=\delta_i^{-1} \circ h(x-(192\sigma_i)^{-1}y^i)$ converges to $h^{y}$ uniformly in pullback sense as $i \to \infty,$ for sufficiently large $i,$ from (\ref{5.8:eq1}) and (\ref{5.8:eq2}) we have that
    \begin{align*}
        \sup_{B_r((192\sigma_i)^{-1}y^i)} d_i(\tilde{v}_i,h^{(192\sigma_i)^{-1}y^i}_i) 
        &\leq\frac{7\eta}{8}\left(\frac{I_{\phi,\tilde{v}_i}((192\sigma_i)^{-1}y^i,r)}{r^{n-1}}\right)^{\frac{1}{2}},
    \end{align*}
    which contradicts the fact $(192\sigma_i)^{-1}y^i \in \mathcal{S}^k_{0,\eta,1/192i}(\tilde{v}_i)$ when $\frac{1}{192i} <r$.
\end{proof}
\end{proof}

\begin{lemma}\label{bd5.10}
Let $u$ be a harmonic map to an $N$-dimensional conical DM-complex $Y_C$ with splitting $u=(V,v):B_{\sigma_j}(0) \to (\mathbb{R}^j \times Y_C^{N-j},d_G)$ satisfying $\Ord_{\phi,v}(0,R_0) \leq \Lambda, \sup_{B_{\sigma_j}(0)}|\nabla u| \leq M$. Let $\rho \in (0,1] $ and $\epsilon>0,$ then there exist $\sigma_{\ref{bd5.10}},\delta_{\ref{bd5.10}}=(\sigma_{\ref{bd5.10}},\delta_{\ref{bd5.10}})(n,\rho,\epsilon,X,\Lambda,M)>0$ such that if $0<\sigma<\sigma_{\ref{bd5.10}}$ and $\ O_Y \in v(B_{2\sigma}(0))$ and $\sup_{x \in B_{2\sigma}(0)} \Ord_{\phi,v}\paren{x,2\sigma}=\Gamma$ and $F=\{x \in B_\sigma(0):\Ord_{\phi,v}(x,\sigma\rho) >\Gamma-\delta_{\ref{bd5.10}}\}$ 
$\sigma \rho$-effectively spans a $k$-dimensional affine space $V,$ then for all $x \in V \cap B_\sigma(0), \ \Ord_{\phi,v}(x,2\sigma \rho^2)>\Gamma -\epsilon.$
\end{lemma}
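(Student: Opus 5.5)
We argue by contradiction and compactness, in the same way as the proof of Lemma \ref{bd5.8}. Suppose the conclusion fails for fixed $\rho,\epsilon$. Then there are harmonic maps $u_i=(V_i,v_i)$ with $\Ord_{\phi,v_i}(0,R_0)\le\Lambda$ and $\sup|\nabla u_i|\le M$, radii $\sigma_i\to0$, and $\delta_i\to0$. For each $i$ there is a point $z_i\in B_{2\sigma_i}(0)$ with $v_i(z_i)=O_Y$. We set $\Gamma_i=\sup_{B_{2\sigma_i}(0)}\Ord_{\phi,v_i}(\cdot,2\sigma_i)$. There are points $x_0^i,\dots,x_k^i\in B_{\sigma_i}(0)$ which $\sigma_i\rho$-effectively span $V^i$ and satisfy $\Ord_{\phi,v_i}(x_\ell^i,\sigma_i\rho)>\Gamma_i-\delta_i$. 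Finally, there is a point $x^i\in V^i\cap B_{\sigma_i}(0)$ with $\Ord_{\phi,v_i}(x^i,2\sigma_i\rho^2)\le\Gamma_i-\epsilon$.

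First we bound $\Gamma_i$ uniformly. By Lemma \ref{ineq:ord} and the monotonicity of Lemma \ref{monotoneorder}, $\Gamma_i\le C(\Lambda+1)$, so after passing to a subsequence $\Gamma_i\to\Gamma_\infty$. Monotonicity also gives $\Ord_{\phi,v_i}(x_\ell^i,\sigma_i\rho)\le \Ord_{\phi,v_i}(x_\ell^i,2\sigma_i)\le\Gamma_i$. Hence the pinching $W^{2\sigma_i}_{\sigma_i\rho}(x_\ell^i,v_i)<\delta_i\to0$. Exactly as in Lemma \ref{bd5.8}, the difference between $\Ord_{\phi,v_i}$ and $\BDOrd_{\phi,v_i}$ is $O(\sigma_i)$ at these scales, because $e^{Ar}-1$ and $Ar^2$ vanish as $r\to0$. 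So all statements transfer to $\BDOrd$ with errors tending to $0$.

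Next we rescale by $\tilde v_i(x)=v_i(c\sigma_i x)$ for a fixed large constant $c$ (as with $192$ in Lemma \ref{bd5.8}). We normalize by $\lambda_i$ and apply Lemma \ref{compactness}, using the zero $z_i$ to meet its hypothesis. This gives a nonconstant harmonic limit $v_\infty$ into the conical $F$-connected complex. The uniform convergence and weak convergence of energy densities then yield convergence of $I_{\phi}$, $E_\phi$ and hence $\BDOrd_\phi$ at all points and scales in the relevant ball. This is the argument used in the proof of Lemma \ref{bd5.8}, and we need it uniformly in base points converging along the sequence. Passing to limits of the rescaled points $x_\ell^i\to x_\ell$ and $x^i\to \bar x\in V^\infty$, we obtain the following for the limit:
\begin{itemize}
\item The points $x_\ell$ $\rho/c$-effectively span $V^\infty$.
\item $\BDOrd_{\phi,v_\infty}(x_\ell,\cdot)$ is constant, equal to $\Gamma_\infty$, on $[\rho/c,2/c]$.
\item $\BDOrd_{\phi,v_\infty}(y,2/c)\le\Gamma_\infty$ for all $y\in B_{2/c}(0)$.
\item $\BDOrd_{\phi,v_\infty}(\bar x,2\rho^2/c)\le\Gamma_\infty-\epsilon$.
\end{itemize}
For $v_\infty$ the smoothed order is the genuine monotone frequency of \cite{dees}, with no error term.

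Now we work with the limit map. Since its frequency is pinched at each $x_\ell$, \cite[Lemma 5.4]{bd} shows $v_\infty$ is homogeneous about each $x_\ell$ on $B_{2/c}(x_\ell)\supset B_{1/c}(x_\ell)$. By \cite[Lemma 5.6]{bd} the degrees coincide, equal to $\Gamma_\infty$, and $v_\infty$ is invariant along $V^\infty$ on the relevant ball. Consequently, for every $y\in V^\infty\cap B_{1/c}(0)$, $v_\infty$ is homogeneous of degree $\Gamma_\infty$ about $y$ on a ball of radius comparable to $\rho/c$, and therefore $\BDOrd_{\phi,v_\infty}(y,s)=\Gamma_\infty$ for $s\le 2\rho^2/c$. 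Taking $y=\bar x$ contradicts the last property above.

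The main obstacle is the passage to the limit of the order at base points that move with $i$ and at the small scale $2\rho^2\sigma_i$. This requires the uniform convergence and energy-density convergence of Lemma \ref{compactness} on a ball containing all $B_{2\rho^2/c}(x^i)$, together with a lower bound on $I_{\phi}$ so that the ratios converge; the lower bound follows from \eqref{heightbound2} and \eqref{eqn:6}. A secondary point is the extension of homogeneity from the points $x_\ell$ to all of $V^\infty\cap B_1$, which is where effective spanning is used as in \cite[Lemma 5.6]{bd}.
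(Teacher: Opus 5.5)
Your proposal is correct and is essentially the paper's argument. Both argue by contradiction with $\delta_i,\sigma_i\to 0$, obtain pinching at the spanning points from the definition of $\Gamma_i$ and monotonicity, pass to a blow-up limit via Lemma~\ref{compactness}, get homogeneity of the limit about each limiting spanning point and invariance along $V^\infty$ from \cite[Lemmas 5.4 and 5.6]{bd}, and reach a contradiction with the order deficit at the limit of the bad points on $V^\infty$. The differences are cosmetic: you read off the pinching in terms of $\Ord_{\phi,v_i}$ before converting to $\BDOrd_{\phi,v_i}$, whereas the paper first passes to $\Gamma_i'$, and you are more explicit than the paper about passing the smoothed order to the limit at moving base points.
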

\begin{proof}
    Assume that no such $\sigma_{\ref{bd5.10}},\delta_{\ref{bd5.10}}>0$ exist.  Then there exists a sequence of harmonic maps $\{u_i=(V_i,v_i): B_{\sigma_j}(0) \to \mathbb{R}^j\times Y_C^{N-j} \}$ with $O_Y\in v_i(B_{2\sigma}(0))$ such that $\Ord_{\phi,v_i}(0,R_0)\leq \Lambda$ and $\sup_{B_{\sigma_j}(0)}|\nabla u_i|\leq M$, and for $\sigma_i<\min \{\frac{1}{i}, R_0/256\},$
    \begin{itemize}
        \item [(i)]$F_i=\{x \in B_{\sigma_i}(0):\Ord_{\phi,v_i}(x,\sigma_i\rho)>\Gamma_i-\frac{1}{i}\} \ \rho\sigma_i$-effectively spans a $k$-dimensional affine space $V^i$ using the points $\{x^i_0,...,x^i_k\} \subset B_{\sigma_i}(0)$.
        \item [(ii)] $\Ord_{\phi,v_i}(x^i_j,\sigma_i\rho) >\Gamma_i-\frac{1}{i}$ for each $j \in \{0,...,k\},$ but there exists a point $y^i \in V^i \cap B_{\sigma_i}(0)$ such that $\Ord_{\phi,v_i}(y^i,2\sigma_i\rho^2)\leq \Gamma_i-\epsilon$
        \item[(iii)] $\displaystyle\sup_{x \in B_{2\sigma_i}(0)}\Ord_{\phi,v_i}(x,2\sigma_i)=\Gamma_i.$
    \end{itemize}
    By definition of $\Ord_{\phi,v_i}$, we have 
    \begin{equation*}
        \Gamma_i=\sup_{B_{2\sigma_i}(0)}e^{A(2\sigma_i)}(\BDOrd_{\phi,v_i}(x,2\sigma_i)+A(2\sigma_i)^2),
    \end{equation*}
    which implies that
    \begin{equation*}
        \sup_{B_{2\sigma_i}(0)}\BDOrd_{\phi,v_i}(x,2\sigma_i)=e^{-A(2\sigma_i)}\Gamma_i-A(2\sigma_i)^2:=\Gamma_i'.
    \end{equation*}
    Similarly, we have
    \begin{equation*}
        \Gamma_i-1/i<\Ord_{\phi,v_i}(x_j^i,\sigma_i\rho)=e^{A\sigma_i\rho}(\BDOrd_{\phi,v_i}(x_j^i,\sigma_i\rho)+A(\sigma_i\rho)^2),
    \end{equation*}
    which implies 
    \begin{align*}
        \BDOrd_{\phi,v_i}(x_j^i,\sigma_i\rho)&>e^{-A\sigma_i\rho}(\Gamma_i-1/i)-A(\rho\sigma_i)^2\\
        &=e^{A\sigma_i(2-\rho)}(\Gamma_i'+A(2\sigma_i)^2)-e^{-A\sigma_i\rho}/i-A(\rho\sigma_i)^2\\
        &>\Gamma_i'+A(2\sigma_i)^2-1/i-A(\rho\sigma_i)^2>\Gamma_i'-1/i.
    \end{align*}
    Then, since $\BDOrd_{\phi,v_i}(x_j^i,2\sigma_i)\leq \Gamma_i'$, we have that
    \begin{equation}\label{bd5.10:freqgap}
        \BDOrd_{\phi,v_i}(x_j^i,2\sigma_i)-\BDOrd_{\phi,v_i}(x_j^i,\sigma_i\rho)<1/i.
    \end{equation}
    Lastly, we have
    \begin{align*}
        \BDOrd_{\phi,v_i}(y^i,2\sigma_i\rho^2)&\leq e^{-2A\sigma_i\rho^2}(\Gamma_i-\eps)-A(2\sigma_i\rho^2)^2\\
        &=e^{A(2\sigma_i-2\sigma_i\rho^2)}(\Gamma_i'+A(2\sigma_i)^2)-e^{-2A\sigma_i\rho^2}\eps-A(2\sigma_i\rho^2)^2\\
        &=\Gamma_i'-\eps+(e^{A(2\sigma_i-2\sigma_i\rho^2)}-1)\Gamma_i'+(1-e^{-2A\sigma_i\rho^2})\eps\\
        &+e^{A(2\sigma_i-2\sigma_i\rho^2)}A((2\sigma_i)^2-(2\sigma_i\rho^2)^2).
    \end{align*}
    Then we have that $\BDOrd_{\phi,v_i}(y^i,2\sigma_i\rho^2)\leq \Gamma_i'-\eps+O(\sigma_i)$ for $i$ sufficiently large. Define $\tilde{v}_i: B_1(0) \to (Y_C^{N-j},d_i=\frac{1}{\lambda_i}d_{Y_C})$ where $\lambda_i=((256\sigma_i)^{1-n}I_{\phi,v_i}(0,256\sigma_i))^{\frac{1}{2}}$ and $\tilde{v}_i(x)=v_i(256\sigma_ix)$. Then, each $\tilde{v}_i$ satisfies:
    \begin{itemize}
        \item [(i)] $F_i=\{(256\sigma_i)^{-1}x \in B_{1/256}(0):\BDOrd_{\phi,\tilde{v}_i}((256\sigma_i)^{-1}x,\rho/256)>\Gamma_i'-\frac{1}{i}\} \ \frac{\rho}{256}$ -- effectively spans a $k$-dimensional affine space $V^i$ by using the points $\{(256\sigma_i)^{-1}x^i_0,...,(256\sigma_i)^{-1}x^i_k\} \subset B_{1/256}(0)$
        \item [(ii)] $\BDOrd_{\phi,\tilde{v}_i}((256\sigma_i)^{-1}x^i_j,\rho/256) >\Gamma_i'-\frac{1}{i}$ for each $j \in \{0,...,k\},$ but there exists a point $(256\sigma_i)^{-1}y^i \in V^i \cap B_{1/256}(0)$ such that $\BDOrd_{\phi,\tilde{v}_i}((256\sigma_i)^{-1}y^i,\rho^2/128)\leq \Gamma_i'-\epsilon+O(\sigma_i)$
        \item[(iii)]
        $\displaystyle\sup_{B_{1/128}(0)}\BDOrd_{\phi,\tilde{v}_i}\paren{(256\sigma_i)^{-1}x,\frac{1}{128}}=\Gamma_i'.$
    \end{itemize}
    
    By Lemma \ref{ineq:ord} with respect to $\tilde{v}_i,$ 
    we have the uniform bound $\Gamma_i'\leq \Gamma_i\leq C_{4.2}(\Lambda+1)$ for some constant $C_{4.2}$ for any $i.$ Then, analogous to the previous arguments, we may choose an (unrelabeled) subsequence such that:
    \begin{itemize}
        \item $\tilde{v}_i \to v_{\infty}$ locally uniformly in $B_{1/64}(0)$ in the sense of Lemma~\ref{compactness}, where $Y_{\infty}$ is a $F$-connected complex, such that the energy densities of the $\tilde{v}_i$'s converge weakly to that of $v_{\infty}$ on $B_{1/64}(0)$.
        \item $\Gamma_i' \to \Gamma'$ due to the uniform bound.
        \item $V^i \to V$ where $V$ is a $k$-dimensional affine subspace spanned by $\{x_0,...,x_k\} \subset \overline{B_{1/256}(0)}$ and $x_j=\lim_{i \to \infty}(256\sigma_i)^{-1}x^i_j$ for each $j$.
        \item $(256\sigma_i)^{-1}y^i \to y \in \overline{V \cap B_{1/256}(0)}$ such that 
        \begin{equation}\label{5.10:eq2}
            \BDOrd_{\phi,v_{\infty}}(y,\rho^2/128) \leq \Gamma'-\epsilon. 
        \end{equation}
    \end{itemize}
    From \eqref{bd5.10:freqgap}, $\BDOrd_{\phi,v_{\infty}}(x_j,1/128)-\BDOrd_{\phi,v_{\infty}}(x_j,\rho/256) \leq 0.$ Since $v_{\infty}$ is a harmonic map into a $F$-connected complex, the monotonicity property \cite[Proposition 4.1]{dees}
    implies $\BDOrd_{\phi,v_{\infty}}(x_j,1/128)-\BDOrd_{\phi,v_{\infty}}(x_j,\rho/256) =0.$ Then, \cite[Lemma 5.4]{bd} implies $v_{\infty}$ is homogeneous of some degree $\alpha_j \geq 1$ on $B_{1/4}(x_j)$ for $j=0,...,k.$ By \cite[Lemma 5.6]{bd}, $v_{\infty}$ is homogeneous of a fixed degree $\Gamma'$ about each $x_j,\ j \in\{0,...,k\},$ and then $k$-homogeneous about a $k$-plane $V.$ So for $y \in V, \ \BDOrd_{\phi,v_{\infty}}(y,\rho^2/128)= \Gamma',$ which contradicts (\ref{5.10:eq2}).    
\end{proof}
\section{Translating inhomogeneities}\label{sec:inhomogeneities}
This section is devoted to showing that if a point $x$ is pinched, i.e. the frequency pinching at $x$ is sufficiently small, then the inhomogeneity of $x$ is translated to the pinched points close to $x$. Before stating the main results of this section, we prove lemmas giving the control on the gradient of a homogeneous map approximating the singular component $v$. Then we use these estimates to show that if a point in the stratum is not approximately $(k+1)$-homogeneous, then nearby pinched points cannot be approximately $(k+1)$-homogeneous either.
\begin{lemma}\label{bd5.11}
    Suppose that $u:B_2(0) \to Y_C$ is a Lipschitz map into a conical $F$-connected complex. Let $h:\mathbb{R}^n \to Y_C$ be a $k$-homogeneous map of order $\alpha$ such that 
    \[\sup_{x \in B_{1}(0)}d(u,h) \leq C'.\]
    Then there exists a $k$-homogeneous map $\hat{h}:\mathbb{R}^n \to Y_C$ satisfying 
    \[\sup_{x \in B_{1}(0)} d(u,\hat{h}) \leq 2C'\]
    and such that 
    \[\sup_{x \in B_{1}(0)}|\nabla \hat{h}| \leq (1+\alpha)\sup_{B_{2}(0)}|\nabla u|.\]
\end{lemma}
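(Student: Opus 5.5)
The plan is a dichotomy on the size of $\nabla h$, reducing everything to the unit sphere by homogeneity. Write $L:=\sup_{B_2(0)}|\nabla u|$, $M:=\sup_{\partial B_1(0)} d(h,O_Y)$ and $O_Y$ for the cone point. Since $h$ is homogeneous of degree $\alpha$ about $0$, i.e. $h(tz)=t^\alpha h(z)$ with $h(0)=O_Y$, the dilation identity on $Y_C$ gives
\[
d(h(tx),h(ty))=t^{\alpha}d(h(x),h(y)) .
\]
Hence $|\nabla h|$ is homogeneous of degree $\alpha-1$. In the case $\alpha\ge 1$ the supremum of $|\nabla h|$ over $B_1(0)$ is attained on $\partial B_1(0)$. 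The map $h$ also splits into a radial part, with $|\partial_r h|=\alpha\, d(h,O_Y)/|x|$, and a tangential part, which is the gradient of $h|_{\partial B_1}$. The target bound $(1+\alpha)L$ is designed so that the radial part costs $\alpha L$ and the tangential part costs $L$.

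First step (radial part). From $d(u,h)\le C'$ on $B_1(0)$ and the Lipschitz bound on $u$, I would like to deduce $M\lesssim L+C'$. Homogeneity gives $d(h(x),O_Y)=|x|^\alpha d(h(x/|x|),O_Y)$, so along each ray the behaviour of $h$ is explicit. Comparing $h$ with $u$ at radii $|x|=1$ and $|x|=s<1$ should give
\[
(1-s^\alpha)M\le L(1-s)+2C' .
\]
This controls $|\partial_r h|$ on the sphere by roughly $\alpha L$, up to an additive error of size $C'$.

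Second step (tangential part). For tangential difference quotients I use $d(h(x),h(y))\le d(u(x),u(y))+2C'\le L|x-y|+2C'$ for $x,y\in B_1(0)$. This is a coarse Lipschitz bound: it holds at scales $|x-y|\gtrsim C'/L$, but not infinitesimally. I would then replace $h|_{\partial B_1}$ by a regularization $g$ on the sphere and set $\hat h(r\theta):=r^\alpha g(\theta)$. The regularization is a geodesic (Karcher-type) average of $h$ over spherical caps of radius comparable to $C'/L$, taken in the NPC cone $Y_C$; since $Y_C$ is an $F$-connected complex, its flats also allow a linear interpolation. I expect three properties of $g$:
\begin{itemize}
\item $\hat h$ is $k$-homogeneous. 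The averaging should be done invariantly under translations along the $k$-dimensional subspace $V$ on which $h$ is invariant, e.g. by averaging only in the directions of $V^\perp$, which preserves the invariance.
\item $d(g,h)\le C'$ on the sphere, using the coarse Lipschitz bound at the averaging scale, and hence $d(u,\hat h)\le 2C'$ on $B_1(0)$.
\item $\operatorname{Lip}(g)\le L$, so that $|\nabla\hat h|\le \alpha M'+L\le(1+\alpha)L$ on $B_1(0)$, where $M'=\sup_{\partial B_1} d(g,O_Y)$. If the additive constant from the first step does not absorb, I would instead normalize $g$ at the cone point so that $M'\le L$.
\end{itemize}

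There is also a degenerate alternative. If $M\le C'$, take $\hat h\equiv O_Y$, which is trivially $k$-homogeneous. Then $d(u,O_Y)\le d(u,h)+M\le 2C'$ and $\nabla\hat h=0$. So only the case $M>C'$ needs the construction above, and in that case the first step yields $M\le \tfrac{1}{1-s^\alpha}(L(1-s)+2C')$ with a definite gain.

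The main obstacle is the third property: producing an exact gradient bound $(1+\alpha)L$ from only the coarse closeness $d(u,h)\le C'$, while simultaneously keeping both homogeneity and the $V$-invariance. My first attempt would be the cap-averaging regularization in the cone, with the averaging scale tuned to $C'/L$ and the case split $M\le C'$ versus $M>C'$ as above. If the constants do not close, I would fall back on checking how the authors of \cite{bd} prove the analogous statement (their Lemma 5.11). Their proof uses only the conical $F$-connected structure, and our hypotheses are identical: $u$ enters only through $\sup_{B_2}|\nabla u|$ and $d(u,h)\le C'$. So I expect their argument to apply verbatim.
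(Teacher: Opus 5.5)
Your fallback is in fact all the paper does: its proof is the single sentence that the argument of \cite[Lemma 5.11]{bd} applies verbatim, since the statement only involves a Lipschitz map into a conical $F$-connected complex. As a self-contained argument, however, your primary route has a genuine gap, and it is where you suspect. Regularizing $h$ cannot give the stated constants, because the only small-scale information you have about $h$ is the coarse bound $d(h(\theta),h(\theta'))\le L|\theta-\theta'|+2C'$, while $h$ itself may have arbitrarily large gradient. A geodesic average $g$ of $h$ over caps of radius $\rho$ then satisfies at best $d(g,h)\le 2C'+L\rho$. That already exceeds the $C'$ you need to conclude $d(u,\hat h)\le 2C'$. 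At the same time $\mathrm{Lip}(g)\lesssim L+C'/\rho$, which blows up as $\rho\to 0$, so no choice of $\rho$ delivers both properties you list. Your first step has the same defect, because it bounds the size of $h$ rather than of $u$ on the sphere. Even when $u(0)=O_Y$, the inequality $(1-s^\alpha)M\le L(1-s)+2C'$ leaves an additive multiple of $C'$ that cannot be absorbed into $\alpha L$.

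The missing idea is to discard $h$ on the sphere and use $u$ itself, which is genuinely $L$-Lipschitz on $\overline{B_1(0)}$ with $L=\sup_{B_2(0)}|\nabla u|$; this is what the hypothesis on $B_2(0)$ is for. Let $V$ be the $k$-plane along which $h$ is invariant. Write $x\in\mathbb{R}^n\setminus V$ as $x=w+t\theta$ with $w\in V$, $\theta$ a unit vector in $V^\perp$ and $t=|\pi_{V^\perp}x|$. Set $\hat h(x):=t^\alpha u(\theta)$ via the dilations of the cone $Y_C$, and $\hat h=O_Y$ on $V$. This map is $V$-invariant and homogeneous of degree $\alpha$ by construction.

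For $x\in B_1(0)$ you have $h(x)=h(t\theta)=t^\alpha h(\theta)$, and dilation by $t^\alpha$ multiplies distances by $t^\alpha\le 1$. Hence $d(h(x),\hat h(x))=t^\alpha d(h(\theta),u(\theta))\le C'$, where the hypothesis extends to $\partial B_1(0)$ by continuity. This gives $d(u,\hat h)\le 2C'$ with no loss.

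For the gradient, moving along $V$ changes nothing. The derivatives in directions of $V^\perp$ orthogonal to $\theta$ are $t^{\alpha-1}$ times derivatives of $u$ along the unit sphere, hence at most $t^{\alpha-1}L$. The radial derivative has size $\alpha t^{\alpha-1}d(u(\theta),O_Y)$. With $\alpha\ge 1$ (forced when $h$ is Lipschitz and nonconstant) and $t<1$, this gives $|\nabla\hat h|\le L+\alpha\sup_\theta d(u(\theta),O_Y)$ on $B_1(0)$. That is $(1+\alpha)L$ when $u(0)=O_Y$. In general $d(u(\theta),O_Y)\le L+d(u(0),h(0))\le L+C'$, and you get $(1+\alpha)L+\alpha C'$, which is all that Corollary~\ref{cor1} actually uses. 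No case split on $M$ and no averaging are needed.
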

\begin{proof}
The proof is identical to the proof of \cite[Lemma 5.11]{bd}.
\end{proof}

\begin{lemma}\label{bd5.12}
     Let $u$ be a harmonic map to an $N$-dimensional conical DM-complex $Y_C$ with splitting $u=(V,v):B_{\sigma_j}(0) \to (\mathbb{R}^j \times Y_C^{N-j},d_G)$ satisfying $\Ord_{\phi,v}(0,R_0)\leq \Lambda$ and $\sup_{B_{\sigma_j}(0)}|\nabla u| \leq M$, then there exists $\eta_1>0$ and $\calA\geq 1$, depending only on $\Lambda$, $n$, $M$, and the constant in \cite[(29)--(33)]{dm}, such that for $\sigma\in (0,\eta_1]$ and $\eta\in (0,\eta_1]$, if for $x\in B_{R_0/64}(0)$ there exists a homogeneous Lipschitz map $h:B_{\sigma}(x)\to Y_C$ such that
     \begin{equation*}
         \sup_{B_{\sigma}(x)}d(v,h)\leq \eta\paren{\sigma^{1-n}I_{\phi,v}(x,\sigma)}^{1/2},
     \end{equation*}
     then $\alpha\leq \calA$, where $\alpha$ is the degree of $h$.
\end{lemma}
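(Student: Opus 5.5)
We argue by comparing the growth of the height of $h$ with that of $v$. If $h$ is homogeneous of degree $\alpha$ about $x$ into the conical complex $Y_C$, then $d(h(x+z),O_Y)=|z|^{\alpha}d(h(x+z/|z|),O_Y)$. Hence the smoothed height of $h$ scales exactly: $I_{\phi,h}(x,\tau)=\tau^{n-1+2\alpha}I_{\phi,h}(x,1)$. In particular
\[
\frac{(\sigma/2)^{1-n}I_{\phi,h}(x,\sigma/2)}{\sigma^{1-n}I_{\phi,h}(x,\sigma)}=2^{-2\alpha}.
\]
The idea is to show that the corresponding ratio for $v$ is bounded below by a constant $c(\Lambda,n,M)>0$. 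Closeness of $v$ and $h$ then forces $2^{-2\alpha}$ to be bounded below, which bounds $\alpha$.

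\emph{Step 1 (doubling for $v$).} Apply \eqref{eqn:6} with $s=\sigma/2$ and $r=\sigma$, at the center $x\in B_{R_0/64}(0)$. Combined with Lemma \ref{ineq:ord} and the monotonicity in Lemma \ref{monotoneorder}, this gives
\[
\sigma^{1-n}I_{\phi,v}(x,\sigma)\leq e^{4C(\Lambda+1)+C_3R_0}(\sigma/2)^{1-n}I_{\phi,v}(x,\sigma/2)
\]
for all $\sigma\leq\eta_1\leq R_0/16$. We therefore take $\eta_1\leq R_0/16$. Write $D:=e^{4C(\Lambda+1)+C_3R_0}$ for this doubling constant.

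\emph{Step 2 (transfer to $h$).} Since $I_{\phi,\cdot}(x,\tau)$ is the integral of $d^2(\cdot,O_Y)$ against the measure $\mu_x=-|y-x|^{-1}\phi'(|y-x|/\tau)\,dy$ supported in $B_\tau(x)\subset B_\sigma(x)$, the triangle inequality applies. With $\tau\in\{\sigma/2,\sigma\}$, it gives
\[
\bigl|I_{\phi,v}(x,\tau)^{1/2}-I_{\phi,h}(x,\tau)^{1/2}\bigr|\leq \mu_x(B_\tau(x))^{1/2}\sup_{B_\sigma(x)}d(v,h)\leq c_n\tau^{(n-1)/2}\,\eta\,\bigl(\sigma^{1-n}I_{\phi,v}(x,\sigma)\bigr)^{1/2}.
\]
Here we use $\mu_x(B_\tau(x))=c_n\tau^{n-1}$. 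Normalize so that $\sigma^{1-n}I_{\phi,v}(x,\sigma)=1$. Then $\sigma^{1-n}I_{\phi,h}(x,\sigma)\geq(1-c_n\eta)^2$. Using Step 1, $(\sigma/2)^{1-n}I_{\phi,v}(x,\sigma/2)\geq D^{-1}$, and hence
\[
(\sigma/2)^{1-n}I_{\phi,h}(x,\sigma/2)\leq\bigl(1+c_n\eta\bigr)^2\cdot\max\{1,\ \cdot\}
\]
is not the needed direction. Instead, compare the lower bound for $v$ at $\sigma/2$ with the upper estimate for $h$: $(\sigma/2)^{1-n}I_{\phi,h}(x,\sigma/2)\geq (D^{-1/2}-c_n'\eta)^2$. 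Also $\sigma^{1-n}I_{\phi,h}(x,\sigma)\leq(1+c_n\eta)^2$. Choosing $\eta_1$ small enough that $c_n'\eta_1\leq\tfrac12 D^{-1/2}$ and $c_n\eta_1\leq 1$, the scaling identity yields
\[
2^{-2\alpha}\geq \frac{D^{-1}/4}{4},
\]
that is, $\alpha\leq\calA:=\tfrac12\log_2(16D)$. This constant depends only on $\Lambda$, $n$, $M$ and the constants from \cite[(29)--(33)]{dm}.

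\emph{Main obstacle.} The main difficulty is Step 1: obtaining the doubling bound uniformly at the off-origin center $x$ and at scale $\sigma$. This requires a uniform bound on $\Ord_{\phi,v}(x,\sigma)$, which is exactly what Lemma \ref{ineq:ord} supplies given $\Ord_{\phi,v}(0,R_0)\leq\Lambda$. There is also a minor subtlety in Step 2: $h$ is homogeneous about $x$ with $h(x)=O_Y$, which is what the exact scaling identity needs. If instead $h$ were homogeneous about its own vertex, we would recenter using the fact that $v(x)=P_0=O_Y$ and $d(v(x),h(x))\leq\eta$ to make the vertex shift negligible.
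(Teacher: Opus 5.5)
Your argument is correct, but it takes a genuinely different route from the paper's.

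\textbf{The paper's route.} The paper does not estimate the degree directly for $v$. It takes the harmonic replacement $w$ of $v$ on $B_{8\sigma}(x)$. Using \cite[Proposition 48]{dm}, the triangle inequality, and \cite[Lemma 3.1]{dees} (to pass from $I^v(x,8\sigma)=I^w(x,8\sigma)$ to $I_{\phi,w}(x,\sigma)$), it gets $\sup_{B_\sigma(x)}d(w,h)\le C(\eta+\sigma)(\sigma^{1-n}I_{\phi,w}(x,\sigma))^{1/2}$. It then quotes \cite[Lemma 5.12]{bd} for harmonic maps into $F$-connected complexes as a black box. Finally it traces that lemma's constants back to $\Lambda$ via \eqref{eqn:6} and Lemma \ref{ineq:ord}.

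\textbf{Your route.} You prove the bound on $v$ itself, from three ingredients:
\begin{itemize}
\item the doubling estimate obtained from \eqref{eqn:6} together with $\Ord_{\phi,v}(x,\sigma)\le \Ord_{\phi,v}(x,R_0/16)\le C(\Lambda+1)$;
\item the exact scaling $I_{\phi,h}(x,\tau)=\tau^{n-1+2\alpha}I_{\phi,h}(x,1)$;
\item Minkowski's inequality in $L^2(\mu_x)$, where $\mu_x$ has total mass $c_n\tau^{n-1}$.
\end{itemize}
This is essentially the mechanism behind the harmonic-map lemma, but run on the almost-monotone quantities for $v$ already established in Section 4. It therefore avoids harmonic replacement entirely. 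It also has no $C\sigma$ error term: smallness is needed only on $\eta$, with $\sigma\le R_0/16$. And it gives the explicit bound $\calA=2+\tfrac12\log_2 D$.

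\textbf{What the paper's route buys.} Reuse. The replacement estimate $\sup d(w,h)\le C(\eta+\sigma)(\cdots)$ is invoked again in Corollary \ref{cor1}. There a harmonic map is needed to apply Lemma \ref{bd5.11} and get the gradient bound on $\hat h$. With your proof, that estimate would have to be derived separately in the corollary. This is short: \cite[Proposition 48]{dm} plus the triangle inequality.

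\textbf{Two cosmetic points.}
\begin{itemize}
\item Delete the aborted sentence in Step 2 that ends with ``is not the needed direction''; the argument that follows it is the right one.
\item Drop the closing hedge about recentering. It is unnecessary: by definition $h^x(y)=h(y-x)$ is homogeneous about $x$, so $h^x(x)=O_Y$. It also relies on $v(x)=P_0=O_Y$, which is not available for a general $x\in B_{R_0/64}(0)$, since $P_0=v(0)$.
\end{itemize}
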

\begin{proof}
    For $\sigma<R_0/256$, let $w:B_{8\sigma}(x) \to Y_C$ be the harmonic map with $w|_{\partial B_{8\sigma}(x)}=v|_{\partial B_{8\sigma}(x)}$. Although the authors claim \cite[Proposition 48]{dm} only in the case where $x\in \calS_j(u)$, the proof does not require this assumption. Thus, we may invoke \cite[Proposition 48]{dm} to assert that there exists a constant depending only on $n$, the Lipschitz constant of $u$, the constant in \cite[(29)-(33)]{dm}, and $\frac{8\sigma E^v(x,8\sigma)}{I^v(x,8\sigma)}$, such that $\sup_{B_{\sigma}(x)}d(v,w)\leq C\sigma\paren{(8\sigma)^{1-n}I^v(x,8\sigma)}^{1/2}$. Since ${I^v}'(x,r)\geq 0$ for all $r<R_0$, we have 
    \begin{equation}\label{eq:5.12.1}
        I_{\phi,v}(x,\sigma)\leq C(n)I^v(x,8\sigma),
    \end{equation}
    so by the triangle inequality we have $\sup_{B_{\sigma}(x)}d(w,h)\leq C(\eta+\sigma)\paren{(8\sigma)^{1-n}I^v(x,8\sigma)}^{1/2}.$ By \cite[Lemma 3.1]{dees}, there exists a constant $C$ depending on $\frac{8\sigma E^w(x,8\sigma)}{I^w(x,8\sigma)}$ such that 
    \begin{equation}\label{eqn:1}
        I^v(x,8\sigma)=I^w(x,8\sigma)\leq CI^w(x,\sigma/2)\leq CI_{\phi,w}(x,\sigma). 
    \end{equation}
    Since $E^w(x,8\sigma)\leq E^v(x,8\sigma)$ and $I^w(x,8\sigma)=I^v(x,8\sigma)$, we have 
    \begin{equation}\label{eq:5.12.2}
        \frac{8\sigma E^w(x,8\sigma)}{I^w(x,8\sigma)}\leq \frac{8\sigma E^v(x,8\sigma)}{I^v(x,8\sigma)}.
    \end{equation}
    Thus, there exists a constant $C$ depending only on $\frac{8\sigma E^v(x,8\sigma)}{I^v(x,8\sigma)}$ such that 
    \begin{equation*}
        \sup_{B_{\sigma}}d(w,h)\leq C(\eta+\sigma)\paren{\sigma^{1-n}I_{\phi,w}(x,\sigma)}^{1/2}.
    \end{equation*}
    Then, by \cite[Lemma 5.12]{bd}, for $\sigma,\eta\leq \min\{R_0/256,\eta_0(n)/2C\}:=\eta_1$ for $C$ depending on $n$, the Lipschitz constant of $u$, and the constant in \cite[(29)--(32)]{dm} and $\frac{8\sigma E^v(x,8\sigma)}{I^v(x,8\sigma)}$, we have $\alpha\leq A$, where $A$ depends on $n$, the Lipschitz constant of $u$, and the constant in \cite[(29)--(32)]{dm}, $\frac{8\sigma E^v(x,8\sigma)}{I^v(x,8\sigma)}$, and $\frac{
    8\sigma E_{\phi,w}(x,8\sigma)}{I_{\phi,w}(x,8\sigma)}$. We have that $E_{\phi,w}(x,8\sigma)\leq E^w(x,8\sigma)\leq E^v(x,8\sigma)$. We also have $I^v(x,8\sigma)=I^w(x,8\sigma)\leq CI^w(x,4\sigma)\leq CI_{\phi,w}(x,8\sigma)$ for $C$ depending only on an upper bound for $\frac{8\sigma E^w(x,8\sigma)}{I^w(x,8\sigma)}$, and thus only on $\frac{8\sigma E^v(x,8\sigma)}{I^v(x,8\sigma)}$. Thus, we have that $\frac{
    8\sigma E_{\phi,w}(x,8\sigma)}{I_{\phi,w}(8\sigma)}$ is bounded by a constant depending only on $\frac{8\sigma E^v(x,8\sigma)}{I^v(x,8\sigma)}$. We have $E^v(x,8\sigma)\leq E_{\phi,v}(x,16\sigma)$, and by \eqref{eq:5.12.1} and \eqref{eqn:6}, there exists $C(\Lambda)$ such that $I^v(x,8\sigma)\geq C(n)I_{\phi,v}(x,8\sigma)\geq C(\Lambda)I_{\phi,v}(x,16\sigma)$. Therefore, we have $\frac{8\sigma E^v(x,8\sigma)}{I^v(x,8\sigma)}\leq C(\Lambda)\BDOrd_{\phi,v}(x,16\sigma)\leq C(\Lambda)\Ord_{\phi,v}(x,16\sigma)\leq C(\Lambda)$, where the last inequality is by Lemma \ref{ineq:ord}. Thus, $\eta_1$ and $A$ depend only on $n$, the Lipschitz constant of $u$, and the constant in \cite[(29)--(32)]{dm} and $\Lambda$.
\end{proof}

\begin{corollary}\label{cor1}
    Let $u$ be a harmonic map to an $N$-dimensional conical DM-complex $Y_C$ with splitting $u=(V,v):B_{\sigma_j}(0) \to (\mathbb{R}^j \times Y_C^{N-j},d_G)$ satisfying $\Ord_{\phi,v}(0,R_0)\leq \Lambda$ and $\sup_{B_{\sigma_j}(0)}|\nabla u| \leq M$, then there exist $\eta_1>0$ and $C>0$ depending only on $\Lambda$, $n$, $M$, and the constant in \cite[(29)--(32)]{dm}, such that if for $\sigma,\eta\in (0,\eta_1]$ and for $x\in B_{R_0/64}(0)$ there exists a $k$-homogeneous Lipschitz map $h:B_{1}(x)\to (Y_C,\lambda_{\sigma}^{-1}d)$ with $\lambda_{\sigma}=(\sigma^{1-n}I_{\phi,v}(x,\sigma))^{\frac{1}{2}}$ such that for $v_{\sigma}:B_1(x) \to (Y_C,\lambda_{\sigma}^{-1}d)$ with $v_{\sigma}(y)=v(\sigma (y-x)+x)$,
    \[\sup_{B_{1}(x)}d(v_{\sigma},h)\leq \eta\]
    then there exists another $k$-homogeneous map $\hat{h}:B_{1}(x)\to (Y_C,\lambda_{\sigma}^{-1}d)$ such that
    \[\sup_{B_{1}(x)}d(v_{\sigma},\hat{h})\leq C(\sigma+\eta)\]
    and such that
    \[\sup_{B_{1}(x)}|\nabla \hat{h}|\leq C.\]
\end{corollary}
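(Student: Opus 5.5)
The plan is to combine Lemma \ref{bd5.12}, which bounds the degree of $h$, with Lemma \ref{bd5.11}, which produces a gradient-controlled replacement. Lemma \ref{bd5.11} needs a Lipschitz map on a ball twice the size with a controlled Lipschitz constant. So the main point is to bound the gradient of the rescaled map $v_\sigma$ on $B_2(x)$ in the metric $\lambda_\sigma^{-1}d$. I expect this to be the main obstacle.

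\textbf{Step 1 (degree bound).} Rescaling back, the hypothesis $\sup_{B_1(x)}d(v_\sigma,h)\le\eta$ in the metric $\lambda_\sigma^{-1}d$ is exactly
\[
\sup_{B_\sigma(x)}d(v,h^\sigma)\le \eta\bigl(\sigma^{1-n}I_{\phi,v}(x,\sigma)\bigr)^{1/2},
\]
where $h^\sigma(y)=\lambda_\sigma h(x+(y-x)/\sigma)$. Since $Y_C$ is conical, $h^\sigma$ is again $k$-homogeneous of the same degree $\alpha$. Lemma \ref{bd5.12} then gives $\alpha\le\calA$ whenever $\sigma,\eta\le\eta_1$.

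\textbf{Step 2 (gradient of $v_\sigma$).} I do not expect a direct bound on $|\nabla v|$ to scale correctly, so I will pass through the harmonic replacement used in the proof of Lemma \ref{bd5.12}. Shrinking $\eta_1$ so that $16\sigma<R_0/64$, let $w$ be harmonic on $B_{16\sigma}(x)$ with boundary values $v$. \cite[Proposition 48]{dm} gives
\[
\sup_{B_{2\sigma}(x)}d(v,w)\le C\sigma\bigl(\sigma^{1-n}I^v(x,16\sigma)\bigr)^{1/2}.
\]
The Lipschitz estimate for harmonic maps into NPC spaces \cite{ks1}, combined with the order bound from Lemma \ref{ineq:ord} and the height comparisons \eqref{eqn:6}, \eqref{eq:5.12.1}, \eqref{eqn:1}, should give
\[
\sup_{B_{2\sigma}(x)}|\nabla w|^2\le C\sigma^{-2}\,\sigma^{1-n}I^v(x,16\sigma)\le C\sigma^{-2}\lambda_\sigma^2 .
\]
Here the heights at scales $\sigma$ and $16\sigma$ are comparable with constants depending only on $\Lambda$. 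After rescaling, $w_\sigma$ is $C$-Lipschitz on $B_2(x)$ and $\sup_{B_1(x)}d(v_\sigma,w_\sigma)\le C\sigma$.

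\textbf{Step 3 (conclusion).} The triangle inequality gives $\sup_{B_1(x)}d(w_\sigma,h)\le \eta+C\sigma$. Apply Lemma \ref{bd5.11} to $w_\sigma$ and $h$, translated so that $x$ becomes the origin. This yields a $k$-homogeneous $\hat h$ with $\sup_{B_1(x)}d(w_\sigma,\hat h)\le 2(\eta+C\sigma)$ and $\sup_{B_1(x)}|\nabla\hat h|\le(1+\calA)C$. A final triangle inequality gives $\sup_{B_1(x)}d(v_\sigma,\hat h)\le C(\sigma+\eta)$, and all constants depend only on $\Lambda,n,M$ and \cite[(29)--(32)]{dm}.

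The delicate bookkeeping in Step 2 is the requirement that every height comparison across scales be uniform in $\Lambda$.
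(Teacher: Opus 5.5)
Your proposal is correct and follows essentially the same route as the paper. The paper also bounds the degree via Lemma \ref{bd5.12}, passes to a harmonic replacement $w_\sigma$ (on $B_8(x)$ in rescaled coordinates rather than your $B_{16}(x)$), applies Lemma \ref{bd5.11} to $w_\sigma$ rather than to $v_\sigma$, and returns to $v_\sigma$ using \cite[Proposition 48]{dm} and the triangle inequality. For the gradient step, the paper bounds $\sup_{B_2(x)}|\nabla w_\sigma|$ by the rescaled energy $E^{v_\sigma}(x,8)$ and then by $C(\Lambda)\BDOrd_{\phi,v}(x,16\sigma)$ via \eqref{eqn:6} and Lemma \ref{ineq:ord}, which is the same order/height bookkeeping you anticipate.
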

\begin{proof}
    For $\sigma,\eta \leq \eta_1$, where $\eta_1$ is as in Lemma \ref{bd5.12}, we have $\alpha\leq A$, where $\alpha$ is the degree of $h$. Also, as shown in the proof of Lemma \ref{bd5.12}, for $\sigma,\eta\leq \eta_1$ we have
    \begin{equation*}
        \sup_{B_{1}(x)}d(w_{\sigma},h)\leq C(\eta+\sigma),
    \end{equation*}
    where $w_{\sigma}:B_8(x)\to (Y_C,\lambda_{\sigma}^{-1}d)$ is the harmonic map with $w_{\sigma}|_{\partial B_8(x)}=v_{\sigma}|_{\partial B_8(x)}$. Then, by Lemma \ref{bd5.11}, there exists a $k$-homogeneous map $\hat{h}$ of degree $\alpha$ such that
    \begin{equation*}
        \sup_{B_{1}(x)}d(w_{\sigma},\hat{h})\leq 2C(\sigma+\eta),
    \end{equation*}
    and
    \begin{equation*}
        \sup_{B_{1}(x)}|\nabla \hat{h}|\leq (1+\alpha)\sup_{B_{2}(x)}|\nabla w_{\sigma}|\leq C(n)(1+A)E^{w_{\sigma}}(x,8)\leq CE^{v_{\sigma}}(x,8).
    \end{equation*}
    By (\ref{eqn:6}) and Lemma \ref{ineq:ord}, we have
    \begin{equation*}
        E^{v_{\sigma}}(x,8)=\frac{\sigma E^v(x,8\sigma)}{I_{\phi,v}(x,\sigma)}\leq \frac{16\sigma E_{\phi,v}(x,16\sigma)}{I_{\phi,v}(x,\sigma)}\leq C(\Lambda)\BDOrd_{\phi,v}(x,16\sigma)\leq C(\Lambda).
    \end{equation*}
    By \cite[Proposition 48]{dm}, we have $\sup_{B_{1}(x)}d(v_{\sigma},w_{\sigma})\leq C\sigma^2$, so we have
    \begin{equation*}
        \sup_{B_{1}(x)}d(v_{\sigma},\hat{h})\leq 2C(\sigma+\eta)+C\sigma^2\leq C(\sigma+\eta).
    \end{equation*}
\end{proof}
\begin{lemma}\label{bd5.13}
    Let $u$ be a harmonic map to an $N$-dimensional conical DM-complex $Y_C$ with splitting $u=(V,v):B_{\sigma_j}(0) \to (\mathbb{R}^j \times Y_C^{N-j},d_G)$ satisfying $\Ord_{\phi,v}(0,R_0)\leq \Lambda \text{ and } \sup_{B_{\sigma_j}(0)}|\nabla u| \leq M.$ Let $0 < \rho < 1$ and $0<\eta<\eta_1$ where $\eta_1$ is chosen in the sense of Lemma \ref{bd5.12}. Then, there exist $\sigma_{\ref{bd5.13}},\delta_{\ref{bd5.13}}=(\sigma_{\ref{bd5.13}},\delta_{\ref{bd5.13}})(n,\rho,\eta,M,X,\Lambda)>0$ such that if $0<\sigma<\sigma_{\ref{bd5.13}}$ and $0<\delta<\delta_{\ref{bd5.13}}$ and
    \begin{itemize}
        \item the set $F:=\{x \in \mathcal{S}^k_{0,\eta,\delta\sigma}(v) \cap B_{\sigma}(0): W^{8\sigma}_{\sigma}(x,v) <\delta\} \ \rho\sigma$-effectively spans a $k$-dimensional affine space $V,$
        \item there exists $y \in B_{\sigma}(0) \cap V$ with $W^{8\sigma}_{\sigma}(y,v)<\delta,$
    \end{itemize}
    then $v$ is not $(\theta\eta,t,k+1)$-homogeneous at $y$ for all $t \in [2\rho\sigma,2\sigma]$, where $\theta=\frac{1}{2C}$ and $C$ is as in Corollary \ref{cor1}.
\end{lemma}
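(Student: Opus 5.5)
We argue by contradiction and compactness, following \cite[Lemma 5.13]{bd} and the blow-up scheme of Lemma \ref{bd5.8} and Lemma \ref{bd5.10}. If the statement fails, we get sequences $u_i=(V_i,v_i)$ with the uniform bounds $\Ord_{\phi,v_i}(0,R_0)\leq\Lambda$ and $\sup|\nabla u_i|\leq M$, radii $\sigma_i\to0$ and $\delta_i\to0$, and the following data:
\begin{itemize}
\item points $x^i_0,\dots,x^i_k\in \calS^k_{0,\eta,\delta_i\sigma_i}(v_i)\cap B_{\sigma_i}(0)$ with $W^{8\sigma_i}_{\sigma_i}(x^i_\ell,v_i)<\delta_i$, which $\rho\sigma_i$-effectively span $V^i$;
\item a point $y^i\in V^i\cap B_{\sigma_i}(0)$ with $W^{8\sigma_i}_{\sigma_i}(y^i,v_i)<\delta_i$;
\item a scale $t_i\in[2\rho\sigma_i,2\sigma_i]$ and a $(k+1)$-homogeneous $h_i$ with $\sup_{B_{t_i}(y^i)}d(v_i,h_i^{y^i})\leq\theta\eta\,(t_i^{1-n}I_{\phi,v_i}(y^i,t_i))^{1/2}$.
\end{itemize}

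We rescale by a fixed multiple of $\sigma_i$, say $\tilde v_i(x)=v_i(c\sigma_i x)$, normalizing by $\lambda_i$ as in Lemma \ref{compactness}. By that lemma, a subsequence converges uniformly to a nonconstant harmonic map $v_\infty$ into a conical $F$-connected complex, with weak convergence of energy densities. The rescaled points, scales and planes converge: $x_\ell\to$ points that $\rho/c$-effectively span $V^\infty$, $y\in V^\infty$, and $t_i/(c\sigma_i)\to t\in[2\rho/c,2/c]$. The vanishing pinching passes to the limit exactly as in the proof of Lemma \ref{bd5.8}, after converting $\Ord$ to $\BDOrd$ with $O(\sigma_i)$ errors. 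Hence $\BDOrd_{\phi,v_\infty}$ is constant on the relevant annuli about each $x_\ell$ and about $y$. By \cite[Lemmas 5.4, 5.6]{bd}, $v_\infty$ is homogeneous of a common degree about every $x_\ell$, and therefore also about $y$; it is $k$-homogeneous along $V^\infty$.

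Next we transfer the almost-$(k+1)$-homogeneity from $y^i$. Apply Corollary \ref{cor1} at scale $t_i$. For $i$ large the error $C(t_i+\theta\eta)$ is at most $C\theta\eta+o(1)\leq\eta/2+o(1)$, since $\theta=1/(2C)$. This replaces $h_i$ by $(k+1)$-homogeneous maps $\hat h_i$ with uniformly bounded Lipschitz constant, and so, after rescaling, by Arzel\`a--Ascoli $\hat h_i\to \hat h$. Here $\hat h$ is $(k+1)$-homogeneous about $y$ along a $(k+1)$-plane $L$ and satisfies $\sup_{B_t(y)}d(v_\infty,\hat h^y)\leq(\eta/2)(t^{1-n}I_{\phi,v_\infty}(y,t))^{1/2}$, using convergence of smoothed heights.

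The key step, and the main obstacle, is to move this approximation from $y$ to some $x_\ell$. Since $v_\infty$ is homogeneous about both $y$ and $x_\ell$ and invariant along $V^\infty\ni x_\ell-y$, translation by $x_\ell-y$ preserves $v_\infty$. The homogeneous maps centered at the two points therefore agree up to this translation, and homogeneity lets us compare scales, since both sides rescale by the same power $s^\alpha$. So $\hat h^{x_\ell}$ approximates $v_\infty$ on $B_s(x_\ell)$ for a suitable $s\in(0,1]$ with the same relative error, using $I_{\phi,v_\infty}(x_\ell,s)=I_{\phi,v_\infty}(y,s)$ by translation invariance. I expect the delicate parts to be choosing the constant $c$ and the scale $s$ so that $B_s(x_\ell)$ stays inside the region where homogeneity of $v_\infty$ is known, and tracking the factor $\eta/2$ versus $\eta$. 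I would handle the second by the strict slack, as in the Claim inside Lemma \ref{bd5.8}.

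Finally, pull back to $\tilde v_i$ at $(c\sigma_i)^{-1}x^i_\ell$ at scale $s$. Uniform convergence and convergence of $I_{\phi}$ give $(\tfrac78\eta,s,k+1)$-homogeneity of $v_i$ at $x^i_\ell$ at scale $c\sigma_i s\geq\delta_i\sigma_i$ for $i$ large. This contradicts $x^i_\ell\in\calS^k_{0,\eta,\delta_i\sigma_i}(v_i)$.
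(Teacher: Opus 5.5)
Your proposal is correct and follows essentially the same route as the paper: contradiction, blow-up via Lemma~\ref{compactness}, Corollary~\ref{cor1} to obtain $(k+1)$-homogeneous approximants with uniform Lipschitz bounds, homogeneity of $v_\infty$ along $V^\infty$ from \cite[Lemmas 5.4 and 5.6]{bd}, and transfer of the approximation from $y$ to some $x_\ell$ by translation invariance, which contradicts $x^i_\ell\in\calS^k_{0,\eta,\delta_i\sigma_i}(v_i)$. The paper takes $c=576$ and keeps the same limiting scale $t\in[\rho/288,1/288]$ at $x_\ell$, so no change of scale is needed; your normalization $I_{\phi,v_\infty}(x_\ell,t)=I_{\phi,v_\infty}(y,t)$ is the correct one at the point where the paper writes $I_{\phi,v_\infty}(0,t)$.
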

\begin{proof}
    Suppose no such $\sigma_{\ref{bd5.13}},\delta_{\ref{bd5.13}}>0$ exist. Then there exists a sequence of harmonic maps $u_i=(V_i,v_i):B_{\sigma_j}(0) \to \mathbb{R}^j \times Y_C^{N-j}$ such that $\Ord_{\phi,v_i}(0,R_0)\leq \Lambda$ and $\sup_{B_{\sigma_j}(0)}|\nabla u_i|\leq M$, and such that for $\sigma_i < \min\{\frac{1}{i},R_0/576,\eta_1\},$
    \begin{itemize}
        \item the set $F_i:=\{x \in \mathcal{S}^k_{0,\eta,\sigma_i/i}(v_i) \cap B_{\sigma_i}(0):W^{8\sigma_i}_{\sigma_i}(x,v_i) <1/i\} \ \rho\sigma_i$-effectively spans a $k$-dimensional affine space $V^i$ by points $\{x_0^i,...,x_k^i\} \subset \mathcal{S}_{0,\eta,\sigma_i/i}(v_i)\cap B_{\sigma_i}(0),$ where $\eta<\eta_1$ is defined as in Lemma \ref{bd5.12}
        \item there exists a sequence of $y^i \in B_{\sigma_i}(0) \cap V^i$ with $W_{\sigma_i}^{8\sigma_i}(y^i,v_i)<\frac{1}{i}$
        \item there exists a sequence of scales $t_i \in [2\rho\sigma_i,2\sigma_i]$ so that $v_i$ is $(\theta\eta,t_i,k+1)$-homogeneous at $y^i$.
    \end{itemize}
    Define blow-up maps $\tilde{v}_i:B_1(0) \to (Y_C^{N-j},\frac{1}{\lambda_i}d)$ where $\lambda_i=((576\sigma_i)^{1-n}I_{\phi,v_i}(0,576\sigma_i))^{\frac{1}{2}}$ by $\tilde{v}_i(x)=v_i(576\sigma_ix).$ Since each $x_j^i$ has $\tilde{v}_i((576\sigma_i)^{-1}x_j^i)=O_Y$, the requirements of Lemma \ref{compactness} are met. By Lemma \ref{compactness} a subsequence of $\tilde{v}_i$ converges locally uniformly to a harmonic map $v_{\infty}:B_{1/64}(0) \to Y_{\infty}$ in the pullback sense. Since $\Ord_{\phi,v_i}(x_j^i,r):=e^{Ar}(\BDOrd_{\phi,v_i}(x_j^i,r)+Ar^2)$ for all $r>0$ and $r\mapsto \Ord_{\phi,v_i}(x_j^i,r)$ is monotone, we have for any $r_1\leq r_2$,
    \begin{align*}
        |\BDOrd_{\phi,v_i}(x_j^i,r_2)-\BDOrd_{\phi,v_i}(x_j^i,r_1)|=&|e^{-Ar_2}\Ord_{\phi,v_i}(x_j^i,r_2)-Ar_2^2
        -e^{-Ar_1}\Ord_{\phi,v_i}(x_j^i,r_1)+Ar_1^2|\\
        =&|e^{-Ar_2}W_{r_1}^{r_2}(x_j^i,v_i)+(e^{-Ar_2}-e^{-Ar_1})\Ord_{\phi,v_i}(x_j^i,r_1)-C_1'(r_2^2-r_1^2)|\\
        \leq& W_{r_1}^{r_2}(x_j^i,v_i)+A(r_2^2-r_1^2)+A(r_2-r_1)\Ord_{\phi,v_i}(x_j^i,r_1).
    \end{align*}
    Thus as $\sigma_i \to 0,$ $W^{8\sigma_i}_{\sigma_i}(\cdot,v_i) \to 0$ implies $|\BDOrd_{\phi,v_i}(\cdot,8\sigma_i)-\BDOrd_{\phi,v_i}(\cdot,\sigma_i)| \to 0.$  Then for the sequence $\{\tilde{v}_i\},$
    \begin{itemize}
        \item $\BDOrd_{\phi,\tilde{v}_i}((576\sigma_i)^{-1}x_j^i,1/72)-\BDOrd_{\phi,\tilde{v}_i}((576\sigma_i)^{-1}x_j^i,1/576) \to 0$ as $i \to \infty$ for \\ $\{(576\sigma_i)^{-1}x_0^i,...,(576\sigma_i)^{-1}x_k^i\} \subset \mathcal{S}_{0,\eta,1/576i}(\tilde{v}_i)\cap B_{1/576}(0),$
        \item $\BDOrd_{\phi,\tilde{v}_i}((576\sigma_i)^{-1}y^i,1/72)-\BDOrd_{\phi,\tilde{v}_i}((576\sigma_i)^{-1}y^i,1/576) \to 0$ as $i \to \infty$ for $(576\sigma_i)^{-1}y^i \in B_{1/576}(0) \cap V^i$  
        \item a sequence of scales $(576\sigma_i)^{-1}t_i \in [\frac{\rho}{288},\frac{1}{288}]$ so that $\tilde{v}_i$ is $(\theta\eta,(576\sigma_i)^{-1}t_i,k+1)$-homogeneous at $(576\sigma_i)^{-1}y^i.$
    \end{itemize}
    In particular, there is a sequence of $(k+1)$-homogeneous maps $\tilde{h}_i$ such that 
    \[
    \sup_{B_{(576\sigma_i)^{-1}t_i}(\sigma_i^{-1}y^i)}d(\tilde{v}_i,\tilde{h}_i^{(576\sigma_i)^{-1}y^i})\leq \theta\eta\left(\frac{I_{\phi,\tilde{v}_i}(0,(576\sigma_i)^{-1}t_i)}{((576\sigma_i)^{-1}t_i)^{n-1}}\right)^{\frac{1}{2}}.
    \]
    Following Corollary \ref{cor1}, we obtain a new sequence of $(k+1)$-homogeneous maps $h_i$ with a uniform gradient bound $C'$ such that for each $i,$
    \begin{equation*}
        \sup_{B_{(576\sigma_i)^{-1}t_i}((576\sigma_i)^{-1}y^i)}d(\tilde{v}_i,h_i^{(576\sigma_i)^{-1}y^i})\leq C(\theta\eta+(576\sigma_i))\left(\frac{I_{\phi,\tilde{v}_i}(0,(576\sigma_i)^{-1}t_i)}{((576\sigma_i)^{-1}t_i)^{n-1}}\right)^{\frac{1}{2}}
    \end{equation*}
   and 
    \[
    \sup_{B_{(576\sigma_i)^{-1}t_i}((576\sigma_i)^{-1}y^i)}\big|\nabla h_i^{(576\sigma_i)^{-1}y^i}\big| \leq C.
    \]

By Lemma \ref{compactness}, there exists an unrelabeled subsequence $\tilde{v}_i$ converging to $v_{\infty}$ uniformly on $B_{1/64}(0)$ such that the energy densitites of the $\tilde{v}_i$'s converge weakly to that of $v_{\infty}$. By taking a further subsequence if necessary, we may assume $(576\sigma_i)^{-1}t_i\to t\in [\frac{\rho}{288},\frac{1}{288}]$ and $(576\sigma_i)^{-1}y^i\to y \in\overline{B_{1/576}(0)}$ as $i\to\infty$.  Also, the corresponding gradient bounds allow us to find a subsequence of $h_i$ converging uniformly to a $(k+1)$-homogeneous map $h.$ By homogeneity, this uniform convergence can be extended to $B_{1/288}(y)$. Therefore, we have
    \begin{itemize}
        \item [(i)] $\{(576\sigma_i)^{-1}x_0^i,...,(576\sigma_i)^{-1}x_k^i\}$ converges to $\{x_0,...,x_k\} \subset \overline{B_{1/576}(0)}$ and \\ $\BDOrd_{\phi,v_{\infty}}(x_j,1/72)-\BDOrd_{\phi,v_{\infty}}(x_j,1/576)=0$ for $j=0,...,k$,
        \item [(ii)] $V^i \to V$ which is spanned by $\{x_0,...,x_k\},$
        \item [(iii)] $(576\sigma_i)^{-1}y^i$ converges to $y \in \overline{B_{1/576}(0)}\cap V$ and $\BDOrd_{\phi,v_{\infty}}(y,1/72)-\BDOrd_{\phi,v_{\infty}}(y,1/576) = 0,$
        \item [(iv)] $(576\sigma_i)^{-1}t_i$ tends to $t \in [\frac{\rho}{288},\frac{1}{288}],$
        \item [(v)] $h_i \to h$ uniformly in the pullback sense where $h$ is a $(k+1)$-homogeneous map with
    \end{itemize}
\[
\sup_{B_t(y)}d_{\infty}(v_{\infty},h^y) \leq C\theta\eta \left(\frac{I_{\phi,v_{\infty}}(0,t)}{t^{n-1}}\right)^{\frac{1}{2}}=\frac{\eta}{2}\left(\frac{I_{\phi,v_{\infty}}(0,t)}{t^{n-1}}\right)^{\frac{1}{2}},
\]
where $h^y=h(x-y).$ Since $v_{\infty}$ is homogeneous about $V$ from \cite[Lemmas 5.4 and 5.6]{bd} and $x_j, \ y \in V$ for $0 \leq j \leq k,$
\[
\sup_{B_t(x_j)}d_{\infty}(v_{\infty},h^{x_j}) \leq \frac{\eta}{2}\left(\frac{I_{\phi,v_{\infty}}(0,t)}{t^{n-1}}\right)^{\frac{1}{2}}.
\]
For sufficiently large $i$ this contradicts $(576\sigma_i)^{-1}x_j^i 
\in \mathcal{S}^k_{0,\eta,1/576i}(\tilde{v}_i)$ whenever $1/576i < t.$
\end{proof}

\begin{lemma}\label{bd5.14}
    Let $u$ be a harmonic map to an $N$-dimensional conical DM-complex $Y_C$ with splitting $u=(V,v):B_{\sigma_j}(0) \to (\mathbb{R}^j \times Y_C^{N-j},d_G)$ satisfying $\Ord_{\phi,v}(0,R_0)\leq \Lambda$ and $ \sup_{B_{\sigma_j}(0)}|\nabla u| \leq M.$ Given $0 < \rho <2$ and $0 <\eta <\eta_1$ where $\eta_1$ is chosen in the sense of Lemma \ref{bd5.12}, there exist $ \sigma_{\ref{bd5.14}}, \, \delta_{\ref{bd5.14}}=(\sigma_{\ref{bd5.14}}, \, \delta_{\ref{bd5.14}})(n,\rho,\eta,M,X,\Lambda)>0$ such that if $0<\sigma<\sigma_{\ref{bd5.14}}$ and $O_Y\in v(B_{\sigma}(0))$ and $W^{4\sigma}_{\rho \sigma}(0,v)<\delta_{\ref{bd5.14}}$ and there exists $y \in B_{\sigma}(0)$ such that
    \begin{itemize}
        \item $W^{4\sigma}_{\rho\sigma}(y,v)<\delta_{\ref{bd5.14}}$
        \item $v$ is not $(\eta,\sigma,k+1)$-homogeneous about $y$.
    \end{itemize}
    Then $v$ is not $(\theta\eta,\sigma,k+1)$-homogeneous about $0$, where $\theta$ is as in Lemma \ref{bd5.13}.
\end{lemma}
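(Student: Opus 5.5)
We argue by contradiction and compactness, exactly as in the proofs of Lemmas \ref{bd5.8} and \ref{bd5.13}. Suppose no such $\sigma_{\ref{bd5.14}},\delta_{\ref{bd5.14}}$ exist. Then there are harmonic maps $u_i=(V_i,v_i)$ with $\Ord_{\phi,v_i}(0,R_0)\leq\Lambda$ and $\sup|\nabla u_i|\leq M$, together with scales $\sigma_i<\min\{1/i,R_0/256,\eta_1\}$ and points $y^i\in B_{\sigma_i}(0)$, such that the following hold:
\begin{itemize}
\item $O_Y\in v_i(B_{\sigma_i}(0))$;
\item $W^{4\sigma_i}_{\rho\sigma_i}(0,v_i)<1/i$ and $W^{4\sigma_i}_{\rho\sigma_i}(y^i,v_i)<1/i$;
\item $v_i$ is not $(\eta,\sigma_i,k+1)$-homogeneous at $y^i$, but is $(\theta\eta,\sigma_i,k+1)$-homogeneous at $0$.
\end{itemize}

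We rescale with $\tilde v_i(x)=v_i(256\sigma_i x)$ and normalize the target by $\lambda_i=((256\sigma_i)^{1-n}I_{\phi,v_i}(0,256\sigma_i))^{1/2}$. Choosing $x_i\in B_{\sigma_i}(0)$ with $v_i(x_i)=O_Y$, Lemma \ref{compactness} applies after adjusting the constants. Passing to a subsequence, $\tilde v_i\to v_\infty$ uniformly on $B_{1/64}(0)$, with energy densities converging, and $v_\infty$ is a nonconstant harmonic map into a conical $F$-connected complex. We also take $(256\sigma_i)^{-1}y^i\to y\in\overline{B_{1/256}(0)}$.

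As in Lemma \ref{bd5.13}, the pinching hypotheses give
\[|\BDOrd_{\phi,v_i}(\cdot,4\sigma_i)-\BDOrd_{\phi,v_i}(\cdot,\rho\sigma_i)|\leq W+A\sigma_i(1+\Lambda')\to0\]
at both $0$ and $y^i$. The smoothed quantities $I_{\phi}$ and $E_\phi$ pass to the limit exactly as in Lemma \ref{bd5.8}. Hence $\BDOrd_{\phi,v_\infty}$ is constant in $r\in[\rho/256,1/64]$ at both $0$ and $y$. By \cite[Lemma 5.4]{bd}, $v_\infty$ is homogeneous about $0$ and about $y$ on the corresponding balls. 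By \cite[Lemma 5.6]{bd}, the two degrees agree, and $v_\infty$ is invariant along the direction $y-0$ (trivially so if $y=0$).

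For the homogeneity at $0$ we use Corollary \ref{cor1}. It replaces the $(k+1)$-homogeneous approximants $h_i$ at $0$ by maps $\hat h_i$ with uniformly bounded gradients and error $C(\theta\eta+\sigma_i)$. Arzel\`a--Ascoli then gives a limit $(k+1)$-homogeneous map $h$ with
\[\sup_{B_{1/256}(0)}d_\infty(v_\infty,h)\leq C\theta\eta\,\bigl((1/256)^{1-n}I_{\phi,v_\infty}(0,1/256)\bigr)^{1/2}=\tfrac{\eta}{2}\bigl(\cdots\bigr)^{1/2}.\]
We now move this approximation from $0$ to $y$. Since $v_\infty$ is homogeneous about both $0$ and $y$ with the same degree and is translation-invariant along $y-0$, and since $h$ can be chosen (using the homogeneity of $v_\infty$ and rescaling the approximation to all radii) so that $h^y$ approximates $v_\infty$ on $B_{1/256}(y)$, we obtain $v_\infty$ as $(\eta/2,1/256,k+1)$-homogeneous at $y$. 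Finally, the comparison of smoothed heights (\ref{5.8:eq2}) and the uniform convergence transfer this back to $\tilde v_i$: for large $i$ it makes $\tilde v_i$ $(7\eta/8,1/256,k+1)$-homogeneous at $(256\sigma_i)^{-1}y^i$, which contradicts the second hypothesis.

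\textbf{Main obstacle.} The hardest step is this transfer of the approximation from $0$ to $y$. It needs the two normalizations $I_{\phi,v_\infty}(0,\cdot)$ and $I_{\phi,v_\infty}(y,\cdot)$ to be comparable, and the approximating map $h$ to be adjustable to be invariant in the $y$-direction, or else an argument that $d(v_\infty,h^y)$ is controlled by $d(v_\infty,h)$ via the translation invariance of $v_\infty$. We would handle the comparability through the homogeneity of $v_\infty$, which makes $I_{\phi,v_\infty}(y,r)=I_{\phi,v_\infty}(0,r)$ once $v_\infty$ is invariant along $y$. For the adjustment of $h$ we would first try the averaging trick of \cite[Lemma 5.14]{bd}, reducing to that statement for $v_\infty$ directly. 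The shrinking of radii caused by the $\rho$ range is absorbed by homogeneity.
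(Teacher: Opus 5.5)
This is essentially the paper's proof: contradiction, blow-up via Lemma~\ref{compactness}, homogeneity of $v_\infty$ about $0$ and $y$ from \cite[Lemmas 5.4, 5.6]{bd}, uniformly Lipschitz approximants from Corollary~\ref{cor1}, and transfer of the approximation from $0$ to $y$ by translation invariance of $v_\infty$ along $y$. That invariance alone gives $\sup_{B_r(y)}d_\infty(v_\infty,h^y)=\sup_{B_r(0)}d_\infty(v_\infty,h)$ and $I_{\phi,v_\infty}(y,r)=I_{\phi,v_\infty}(0,r)$, so the step you flagged as the main obstacle needs no averaging and no modification of $h$.

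The one fix is bookkeeping: rescale by $320\sigma_i$, as the paper does, rather than $256\sigma_i$. With $256$, the ball $B_{1/64}(y)$ on which the smoothed energy at $y$ is computed is not contained in $B_{1/64}(0)$, where Lemma~\ref{compactness} gives convergence of energy densities. With $320$ the relevant ball is $B_{1/80}(y)\subset B_{1/64}(0)$.
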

\begin{proof}
Suppose no such $\sigma_{\ref{bd5.14}}, \, \delta_{\ref{bd5.14}}>0$ exist. Then there exists a sequence of harmonic maps $u_i=(V_i,v_i):B_{\sigma_j}(0) \to \mathbb{R}^j \times Y_C^{N-j}$ with $O_Y\in v_i(B_{\sigma_i}(0))$ such that $\Ord_{\phi,v_i}(0,R_0)\leq \Lambda$ and $\sup_{B_{\sigma_j}(0)}|\nabla u_i|\leq M$, and such that for $\sigma_i < \min\{\frac{1}{i},R_0/320,\eta_1\},$
    \begin{itemize}
        \item $W_{\rho\sigma_i}^{4\sigma_i}(0,v_i)<\frac{1}{i},$
        \item there exists $y^i \in B_{\sigma_i}(0)$ such that $W_{\rho\sigma_i}^{4\sigma_i}(y^i,v_i)<\frac{1}{i},$
        \item $v_i$ is not $(\eta,\sigma_i,k+1)$-homogeneous about $y^i,$
        \item $v_i$ is $(\theta\eta,\sigma_i,k+1)$-homogeneous about $0.$
    \end{itemize}
    Define blow-up maps $\tilde{v}_i:B_1(0) \to (Y_C^{N-j},\frac{1}{\lambda_i}d)$ with $\lambda_i=((320\sigma_i)^{1-n}I_{\phi,v_i}(0,320\sigma_i))^{\frac{1}{2}}$ by $\tilde{v}_i(x)=v_i(320\sigma_ix).$ For $\{\tilde{v}_i\},$
    \begin{itemize}
        \item as $\sigma_i \to 0, W_{\rho/320}^{1/80}(0,\tilde{v}_i) \to 0,$ 
        \item $W_{\rho/320}^{1/80}((320\sigma_i)^{-1}y^i,\tilde{v}_i) \to 0$ as $\sigma_i \to 0,$
        \item $\tilde{v}_i$ is not $(\eta,1/320,k+1)$-homogeneous about $(320\sigma_i)^{-1}y^i,$
        \item $\tilde{v}_i$ is $(\theta\eta,1/320,k+1)$-homogeneous about $0$ with homogeneous map $\tilde{h}_i.$
    \end{itemize}
    By Lemma \ref{compactness}, $\tilde{v}_i$ converges uniformly (up to a subsequence) to a harmonic map $v_{\infty}:B_{1/64}(0) \to Y_{\infty}$ such that the energy densities of the $\tilde{v}_i's$ converge weakly on $B_{1/64}(0)$ to the energy density of $v_{\infty}$. Following the proof of Lemma \ref{bd5.13}, we can find a subsequence of $(320\sigma_i)^{-1}y^i$ converging to a point $y \in\overline{B_{1/320}(0)}$ and replace $\{\tilde{h}_i\}$ by a subsequence of $h_i$ converging uniformly to a $(k+1)$-homogeneous map $h.$ Now we have 
    \begin{itemize}
        \item [(i)] $\BDOrd_{\phi,\tilde{v}_i}(0,\frac{1}{80})-\BDOrd_{\phi,\tilde{v}_i}(0,\frac{\rho}{320}) \to 0,$
        \item [(ii)] $\BDOrd_{\phi,\tilde{v}_i}((320\sigma_i)^{-1}y^i,\frac{1}{80})-\BDOrd_{\phi,\tilde{v}_i}((320\sigma_i)^{-1}y^i,\frac{\rho}{320}) \to 0$ for $(320\sigma_i)^{-1}y^i \in B_{1/320}(0)$ converging to $y \in \overline{B_{1/320}(0)},$
        \item [(iii)] $v_{\infty}$ is not $(\eta,1/320,k+1)$-homogeneous about $y,$
        \item [(iv)] $v_{\infty}$ is $(C\theta\eta,1/320,k+1)$-homogeneous about $0$ with
    \end{itemize}
    \begin{equation}\label{5.14:eq1}
    \sup_{B_{1/320}(0)}d_{\infty}(v_{\infty},h) \leq C\theta\eta\left(\frac{I_{\phi,v_{\infty}}(0,1/320)}{(1/320)^{n-1}}\right)^{\frac{1}{2}}.
    \end{equation}
    By \cite[Lemma 5.4]{bd}, $v_{\infty}$ is homogeneous about both $0$ and $y.$ In particular, if $y\neq0,$ by \cite[Lemma 5.6]{bd}, $v_{\infty}$ is invariant along the line between $0$ and $y.$ Hence,
    \begin{equation}\label{5.14:eq2}
    \sup_{B_{1/320}(y)}d_{\infty}(v_{\infty},h^y) \leq C\theta\eta \left(\frac{I_{\phi,v_{\infty}}(y,1/320)}{(1/320)^{n-1}}\right)^{\frac{1}{2}}.
    \end{equation}
    Since $(320\sigma_i)^{-1}y^i$ converges to $y \in \overline{B_{1/320}(0)}$ and $\tilde{v}_i$ converges to $v_{\infty}$ locally uniformly on $B_{1/64}(0),$ for large sufficiently $i,$ (\ref{5.14:eq2}) contradicts the assumption that $\tilde{v}_i$ is not $(\eta,\frac{1}{320},k+1)$-homogeneous about $(320\sigma_i)^{-1}y^i.$ If $y=0,$ (\ref{5.14:eq1}) is still a contradiction.
\end{proof}

\section{Mean Flatness}\label{sec:meanflatness}
In this section, we firstly define the $k$-th mean flatness of a measure and recall the rectifiability result in \cite{at}. Then, we prove that if $v$ is not $(\eta,\sigma,k+1)$-homogeneous at $x$ and the frequency pinching is bounded, then we can control the $k$-th mean flatness by the frequency pinching. This provides a link between the monotonicity formulae and the flatness estimates needed for the covering arguments in Appendix.

\begin{definition}
    Let $\mu$ be a Radon measure on $\RR^n$ and let $k\in \{0,1,\ldots,n-1\}$. The $k$\textbf{-th mean flatness} of $\mu$ on $B_r(x)\subset\RR^n$ is defined as
    \[D^k_{\mu}(x,r):=\inf_Lr^{-k-2}\int_{B_r(x)}\dist(y,L)^2d\mu(y),\]
    where the infimum is over all affine $k$-planes $L\subset\RR^n$.
\end{definition}
The $k$-th mean flatness of $\mu$ on $B_r(x)$ can be computed using the formula $D_{\mu}^k(x,r)=r^{-k-2}\sum_{\ell=k+1}^n\lambda_{\ell}$, where $0\leq \lambda_n\leq \lambda_{n-1}\leq\cdots\leq \lambda_1$ are the eigenvalues of the bilinear form $b(v,w):=\int_{B_{r}(x)}((x-\bar{x})\cdot v)((x-\bar{x})\cdot w)$ (see \cite[Definition 6.1]{dees}). The $k$-th mean flatness of a measure provides a necessary and sufficient condition for $k$-rectifiability of a set, which we will use to show $k$-rectifiability of $\calS^k_{0,\eta}(v)$.
\begin{lemma}[Corollary 1.3. in \cite{at}]\label{rectmeanflatness}
    If $E\subset \RR^n$ is an $\calH^k$-measurable set with $\calH^k(E)<\infty$, then $E$ is $k$-rectifiable if and only if $\int_0^1D_{\mu}^k(x,s)\frac{ds}{s}<\infty$ for $\mu$-a.e. $x$, where $\mu:=\calH^k\resmes E$.
\end{lemma}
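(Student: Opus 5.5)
The statement is exactly \cite[Corollary 1.3]{at}, so in the paper we will simply invoke it. The plan below indicates how one would prove it, to make clear which ingredients enter. The first step is to identify $D^k_\mu$ with the squared $L^2$ Jones $\beta$-numbers of $\mu$. Writing
\[
\beta^k_{\mu,2}(x,r)^2:=\inf_L \frac{1}{r^k}\int_{B_r(x)}\Bigl(\frac{\dist(y,L)}{r}\Bigr)^2\,d\mu(y),
\]
we have $D^k_\mu(x,r)=\beta^k_{\mu,2}(x,r)^2$ by definition. The condition in Lemma \ref{rectmeanflatness} is therefore the finiteness $\mu$-a.e. of the square function $\int_0^1\beta^k_{\mu,2}(x,s)^2\,\frac{ds}{s}$. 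Throughout we may restrict to $\mu=\calH^k\resmes E$ with $\calH^k(E)<\infty$. In that case the upper density $\Theta^{*,k}(\mu,x)$ is positive and finite for $\mu$-a.e. $x$, by standard density theorems.

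For the necessity direction (rectifiable implies finite square function), we reduce to pieces of $E$ lying on a single Lipschitz graph $\Gamma=\{(z,f(z))\}$, using countable decomposition and the density theorem to discard the part off the graph. On a Lipschitz graph, $\beta^k_{\calH^k\resmes\Gamma,2}$ is controlled by Dorronsoro's theorem. It is bounded by the local $L^2$ deviation of $f$ from affine maps, and the integral over scales of this deviation is bounded by $\|\nabla f\|_{L^2}^2$. This gives $\int_\Gamma\int_0^1\beta^2\,\frac{ds}{s}\,d\calH^k<\infty$. Passing from $\calH^k\resmes\Gamma$ to $\mu=\calH^k\resmes(E\cap\Gamma)$ requires a density argument. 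At $\mu$-a.e. point, $E\cap\Gamma$ has density one in $\Gamma$, and $\beta$ is monotone under restriction of the measure. This yields a.e. finiteness.

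For the sufficiency direction we follow Azzam--Tolsa. We fix a compact piece $F\subset E$ on which the square function is bounded by $M$ and the lower and upper densities are pinned in $[\theta,\Theta]$ at scales below $r_0$; a Lebesgue-differentiation and Egorov-type argument reduces to this. We then run a stopping-time construction over dyadic scales. We stop when the density drops below $\theta/2$, rises above $2\Theta$, or the best-approximating planes tilt by more than a small angle. On the non-stopped tree the $\beta$-numbers are summable, so a David--Toro or Reifenberg-type parametrization produces a bi-Lipschitz image of $\RR^k$ covering most of $F$.

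The main obstacle is showing that the stopped cubes carry small total measure, in particular those stopped for low density. Controlling density drops purely through $\beta_2$ is the delicate heart of \cite{at}. Our first attempt would use the square-function bound together with a packing estimate: the tilt between planes at consecutive scales is bounded by $\beta$, and density changes are bounded by sums of $\beta^2$ along the tree.
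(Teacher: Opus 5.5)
Invoking \cite[Corollary 1.3]{at} is exactly what the paper does; it gives no further argument. Your observations that $D^k_\mu(x,r)=\beta^k_{\mu,2}(x,r)^2$ and that $0<\Theta^{*,k}(\mu,x)<\infty$ for $\mu$-a.e.\ $x$ are precisely why that corollary applies verbatim. The sketch you attach, however, has a genuine gap in each direction, so it should not be presented as a proof outline.

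\emph{Necessity.} You cannot discard $E\setminus\Gamma$ with the density theorem. For $x\in E\cap\Gamma$, $D^k_\mu(x,r)$ is computed with all of $\mu\resmes B_r(x)$, including $\calH^k\resmes(E\setminus\Gamma)$. Monotonicity under restriction only gives $D^k_{\calH^k\resmes(E\cap\Gamma)}\le D^k_\mu$, which is the wrong inequality. Conversely, once $E\subset\Gamma$ genuinely holds, monotonicity alone gives $D^k_\mu\le D^k_{\calH^k\resmes\Gamma}$. So your density-one step sits where it is not needed and is missing where it is. Density zero of $E\setminus\Gamma$ at $x$ only bounds its contribution to $D^k_\mu(x,r)$ by $Cr^{-k}\calH^k((E\setminus\Gamma)\cap B_r(x))=o(1)$, and $o(1)$ need not be integrable against $dr/r$. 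For instance, in $\RR^{k+1}$, place above a flat $k$-disk horizontal pieces at heights $2^{-j}$, each finely spread, with areas $1/(j\log^2 j)$. This gives a rectifiable $E$ with $\calH^k(E)<\infty$ for which the crude bound is comparable to $1/\log\log(1/r)$ at every interior point of the disk, so its $dr/r$-integral diverges. The true $\beta$-integral there is finite, because those pieces lie at height $\ll r$, and that is exactly the information the density theorem throws away. A correct argument must use how far the off-graph mass sits from the approximating planes, not merely how little of it there is. This non-locality is part of why the ``only if'' half of the corollary is a separate theorem (Tolsa's companion paper to \cite{at}) rather than a consequence of Dorronsoro's estimate.

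\emph{Sufficiency.} The reduction to a compact $F\subset E$ on which the lower density is pinned in $[\theta,\Theta]$ is not available. For $\mu=\calH^k\resmes E$ only the upper density is a priori bounded below. The lower density $\Theta^k_*(\mu,x)$ can vanish on a set of positive measure; sparse Cantor-type sets with $0<\calH^k(E)<\infty$ have it vanishing everywhere. Neither Egorov's theorem nor passing to subsets can raise it. Positive lower density is essentially part of what must be proved (rectifiability forces density one a.e.), and doing without it is the main new difficulty resolved in \cite{at}. Your last paragraph is in fact inconsistent with the reduction. If $\Theta^k_*(\mu,\cdot)\ge\theta$ held uniformly below $r_0$ on $F$, then, with appropriate constants, no cube meeting $F$ would ever stop for low density, and the ``delicate heart'' you identify would never arise. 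The real argument has to run the stopping time with only the upper-density bound. It must then show, from the square-function bound alone, that the low-density stopping cubes carry small measure.
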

We will use the frequency pinching of $v$ to bound the $k$-th mean flatness of $\calS_{0,\eta}^k(v)$. To do this, we first need two lemmas:
\begin{lemma}[Lemma 6.3 in \cite{bd}]\label{bd6.3}
    Let $x_0,...,x_n$ be orthonormal coordinates for $\mathbb{R}^n$ and let $u:\Omega \to X$ be a harmonic map from a convex domain into an $F$-connected complex. Suppose that $B_r(x_0)\subset \Omega$ and $u|_{B_r(x_0)}$ is a function depending only on $x_1,...,x_j$ for $j<n.$ Then, $u$ is a function of only $x_0,...,x_j$ on all of $\Omega.$
\end{lemma}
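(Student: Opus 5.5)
The plan is to prove that every directional derivative of $u$ transverse to $\mathrm{span}(x_1,\dots,x_j)$ vanishes on all of $\Omega$. The argument works on the regular set $\calR(u)$, where $u$ is a Euclidean harmonic map and hence real analytic, and then uses the smallness of $\calS(u)$ to pass to the whole domain. Fix a unit vector $e\in\mathrm{span}(x_{j+1},\dots,x_n)$; it suffices to show $u(x+te)=u(x)$ whenever the segment from $x$ to $x+te$ lies in $\Omega$, which it always does since $\Omega$ is convex. (I read the statement's conclusion ``$x_0,\dots,x_j$'' as the typo for $x_1,\dots,x_j$.)

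\emph{Step 1 (analyticity on the regular set).} At each $p\in\calR(u)$ there is a ball mapped into a single flat $F\cong\RR^N$. There $u$ is a harmonic map into Euclidean space, so its coordinates are harmonic functions and real analytic. Consider the directional energy density $|u_*(e)|^2=|\partial_e u|^2$. It is independent of the choice of flat, because two flats containing the image of an open set agree there up to a Euclidean isometry. Hence $|\partial_e u|^2$ is a well-defined real-analytic function on the open set $\calR(u)$.

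\emph{Step 2 (propagation).} On $B_r(x_0)$ we have $\partial_e u\equiv 0$ by hypothesis. Since $\calS(u)$ has measure zero, $B_r(x_0)\cap\calR(u)$ is a nonempty open set on which $|\partial_e u|^2$ vanishes. Analyticity then forces $|\partial_e u|^2\equiv 0$ on the connected component of $\calR(u)$ containing this set. To finish this step I need $\calR(u)$ to be connected. For this I use that $\calS(u)$ is closed with Hausdorff codimension at least $2$, by the codimension estimate for harmonic maps into $F$-connected complexes (\cite{dees}, \cite{bd}). The complement of a closed set of $\calH^{n-1}$-measure zero in a connected open set is connected: any two points of the complement can be joined by a polygonal path, and a generic small perturbation of it avoids the small set by a projection argument. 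I expect this step to be the main point requiring care, mainly checking that the codimension-two estimate is available without circularity, since it must be proved independently of this lemma.

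\emph{Step 3 (conclusion).} Now $|\partial_e u|^2=0$ on $\calR(u)$, and hence almost everywhere on $\Omega$. The Korevaar--Schoen directional energy measure $|u_*(e)|^2\,d\mu$ is absolutely continuous because $u$ is Lipschitz, so it vanishes identically. By \cite{ks1}, for a Lipschitz map, for a.e.\ line parallel to $e$ the restriction of $u$ has metric derivative equal to $|u_*(e)|$ almost everywhere. Therefore $u$ is constant along a.e.\ such line segment in $\Omega$. By continuity of $u$ it is constant along every segment in $\Omega$ parallel to $e$. Since $e$ was an arbitrary unit vector in $\mathrm{span}(x_{j+1},\dots,x_n)$ and $\Omega$ is convex, $u(x)$ depends only on $(x_1,\dots,x_j)$ throughout $\Omega$.
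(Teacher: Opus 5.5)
The paper gives no proof of this lemma. It quotes it from \cite{bd} (you correctly read the coordinate typo as $x_1,\dots,x_n$) and uses it only for the blow-up limit $v_\infty$ in Lemma \ref{toolformainflatness}. So there is no internal argument to compare against.

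Your argument is a correct, self-contained proof. Its steps are:
\begin{itemize}
\item On the open set $\mathcal{R}(u)$, $u$ locally minimizes energy among maps into an isometrically embedded flat, so its coordinates are Euclidean-harmonic and real analytic. This uses the Euclidean domain metric, which is exactly the setting where the lemma is applied.
\item The flat-independent density $|\partial_e u|^2$ is therefore a well-defined analytic function on $\mathcal{R}(u)$.
\item $\mathcal{R}(u)$ is connected because $\mathcal{S}(u)$ is relatively closed with $\mathcal{H}^{n-1}(\mathcal{S}(u))=0$.
\item Density of $\mathcal{R}(u)$ and continuity of $u$ finish the argument.
\end{itemize}

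Working with $|\partial_e u|^2$ rather than continuing coordinate functions through a chain of flats is the right choice, because unique continuation fails for branching targets. In a tripod with legs $A,B,C$, send $x$ to signed position $x_1$ on the line $A\cup B$, and alternatively on the line $A\cup C$. These two harmonic maps agree on $\{x_1<0\}$ and differ on $\{x_1>0\}$, but $|\partial_e u|^2$ does not see the branching.

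On the circularity you flagged: take the codimension-two bound from \cite{dm}. That result treats general DM-complexes, so in particular $F$-connected ones, by dimension reduction. Do not take it from the rectifiability theorems of \cite{dees} or \cite{bd}, since their mean-flatness step relies on this very lemma.

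Step 3 can also be shortened without the Korevaar--Schoen directional energy measures. Orthogonal projection onto $e^\perp$ is $1$-Lipschitz and $\mathcal{H}^{n-1}(\mathcal{S}(u))=0$, so almost every line parallel to $e$ misses $\mathcal{S}(u)$ entirely. On the intersection of such a line with the convex set $\Omega$, $u$ is locally constant, hence constant, and continuity handles the remaining lines.
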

\begin{lemma}\label{toolformainflatness}
    Let $u$ be a harmonic map to an $N$-dimensional conical DM-complex $Y_C$ with splitting $u=(V,v):B_{\sigma_j}(0) \to (\mathbb{R}^j \times Y_C^{N-j},d_G)$ satisfying $\Ord_{\phi,v}(0,R_0) \leq \Lambda,$ and $ \sup_{B_{\sigma_j}(0)}|\nabla u| \leq M,$ where $R_0$ is as in Lemma \ref{monotoneorder}. Then for any $\eta>0, \rho \in (0,\frac{1}{2}),$ there are constants $\sigma_{\ref{toolformainflatness}}, \delta_{\ref{toolformainflatness}}=(\sigma_{\ref{toolformainflatness}}, \delta_{\ref{toolformainflatness}})(n,\rho,\eta,M,X,\Lambda)>0$ so that the following holds for all $0<\sigma<\sigma_{\ref{toolformainflatness}}$: Suppose that $O_Y\in v(B_{2\sigma}(0))$, $v$ is not $(\eta,\sigma,k+1)$-homogeneous at $0$ and $W^{2\sigma}_{\rho\sigma}(0,v)<\delta_{\ref{toolformainflatness}}$. Then for any orthonormal set $\{s_1,...,s_{k+1}\},$ 
    \[
    \sigma^{2-n}\int_{B_{5\sigma/4}(0)\setminus B_{3\sigma/4}(0)} \sum^{k+1}_{j=1}|\partial_{s_j}v|^2\geq \delta_{\ref{toolformainflatness}}.
    \]
\end{lemma}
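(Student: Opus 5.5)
We argue by contradiction and compactness, in the same pattern as Lemmas \ref{bd5.8}, \ref{bd5.13} and \ref{bd5.14}. Suppose no such $\sigma_{\ref{toolformainflatness}},\delta_{\ref{toolformainflatness}}$ exist. Then there are harmonic maps $u_i=(V_i,v_i):B_{\sigma_j}(0)\to\RR^j\times Y_C^{N-j}$ with $\Ord_{\phi,v_i}(0,R_0)\le\Lambda$ and $\sup|\nabla u_i|\le M$, scales $\sigma_i\to 0$, and orthonormal sets $\{s_1^i,\dots,s_{k+1}^i\}$ with the following properties:
\begin{itemize}
\item $O_Y\in v_i(B_{2\sigma_i}(0))$;
\item $v_i$ is not $(\eta,\sigma_i,k+1)$-homogeneous at $0$;
\item $W^{2\sigma_i}_{\rho\sigma_i}(0,v_i)<1/i$;
\item $\sigma_i^{2-n}\int_{B_{5\sigma_i/4}\setminus B_{3\sigma_i/4}}\sum_j|\partial_{s^i_j}v_i|^2<1/i$.
\end{itemize}

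We blow up at scale $c\sigma_i$, with $c$ a fixed large constant such as $128$, so that the annulus and the pinching scales sit inside $B_{1/64}(0)$. Set $\tilde v_i(x)=v_i(c\sigma_i x)$ with target metric $\lambda_i^{-1}d$, where $\lambda_i=((c\sigma_i)^{1-n}I_{\phi,v_i}(0,c\sigma_i))^{1/2}$. Because some point of $B_{2\sigma_i}$ is mapped to $O_Y$, Lemma \ref{compactness} applies after a translation. A subsequence converges uniformly to a nonconstant harmonic map $v_\infty$ into a conical $F$-connected complex, and the directional energy densities converge weakly on $B_{1/64}(0)$. We also pass to a subsequence with $s_j^i\to s_j$, which is again orthonormal.

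Next we identify the structure of $v_\infty$. The pinching $W\to0$ together with the computation in Lemma \ref{bd5.13} gives $|\BDOrd_{\phi,\tilde v_i}(0,2/c)-\BDOrd_{\phi,\tilde v_i}(0,\rho/c)|\to0$. The smoothed heights and energies pass to the limit exactly as in Lemma \ref{bd5.8}, so $\BDOrd_{\phi,v_\infty}(0,\cdot)$ is constant on $[\rho/c,2/c]$. By \cite[Proposition 4.1]{dees} and \cite[Lemma 5.4]{bd}, $v_\infty$ is then homogeneous about $0$ on $B_{2/c}(0)$.

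The directional energies also pass to the limit. Since $\partial_{s^i_j}$ converges to $\partial_{s_j}$ and the directional energy densities converge weakly (the energy densities are uniformly bounded, so changing direction slightly costs little), we get $\int_{A}\sum_j|\partial_{s_j}v_\infty|^2=0$, where $A$ is the rescaled annulus $B_{5/(4c)}\setminus B_{3/(4c)}$. Making this rigorous is the delicate point: we need weak convergence of the directional measures against a cutoff slightly inside $A$. Those boundary-layer errors vanish because the annulus condition is scale-invariant and we can shrink it slightly. Hence $v_\infty$ is invariant in the directions $s_1,\dots,s_{k+1}$ on $A$.

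Homogeneity about $0$ extends this invariance along rays to the punctured ball $B_{5/(4c)}\setminus\{0\}$, and continuity extends it to the whole ball. Lemma \ref{bd6.3} then lets us conclude that on the convex region where $v_\infty$ is defined, it depends only on the orthogonal complement of $\mathrm{span}\{s_j\}$. So $v_\infty$ coincides on $B_{1/c}(0)$ with a $(k+1)$-homogeneous map $h$.

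Finally, uniform convergence gives $\sup_{B_{1/c}}d_i(\tilde v_i,h_i)\to0$, where $h_i=\delta_i^{-1}\circ h$ as in the Claim of Lemma \ref{bd5.8}. The normalized heights $(c^{-1})^{1-n}I_{\phi,\tilde v_i}(0,1/c)$ converge to $(c^{-1})^{1-n}I_{\phi,v_\infty}(0,1/c)>0$, because $v_\infty$ is nonconstant and homogeneous about $0$ with $v_\infty(0)=O_Y$. Therefore, for large $i$, $\tilde v_i$ is $(\eta,1/c,k+1)$-homogeneous at $0$; unscaling, $v_i$ is $(\eta,\sigma_i,k+1)$-homogeneous at $0$, which is a contradiction.

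We must check that $v_\infty(0)=O_Y$, so that the homogeneous map is centered correctly. This follows because homogeneity about $0$ of a nonconstant harmonic map into a cone forces the vertex at the center. The main obstacle is the passage of the directional energy to the limit together with that centering; we handle both exactly as in the proof of Lemma \ref{bd5.14}.
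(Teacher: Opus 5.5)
Your proposal is correct and follows essentially the same contradiction--compactness argument as the paper: blow-up at scale $128\sigma_i$, Lemma \ref{compactness}, homogeneity about $0$ from vanishing pinching via \cite[Lemma 5.4]{bd}, and vanishing directional energy on the rescaled annulus extended to all of $B_{1/64}(0)$ by Lemma \ref{bd6.3}, contradicting non-$(k+1)$-homogeneity for large $i$. One minor remark: no translation is needed to invoke Lemma \ref{compactness}, because with blow-up scale $128\sigma_i$ the point of $B_{2\sigma_i}(0)$ mapped to $O_Y$ already lies in $B_{128\sigma_i/64}(0)$; also, your ray-extension via homogeneity is redundant once you apply Lemma \ref{bd6.3}, which is all the paper uses.
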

\begin{proof}
    Fix $\eta>0$ and $\rho \in (0,1)$ and suppose that the claimed inequality does not hold for any $\delta, \sigma>0.$ Then there exists a sequence $\{u_i=(V_i,v_i):B_{\sigma_j}(0),\to \mathbb{R}^j \times Y_C^{N-j}\}$ with $\Ord_{\phi,v_i}(0,R_0)\leq \Lambda$ and $\sup_{B_{\sigma_j}(0)}|\nabla u_i|\leq M$ such that for $\sigma_i <\min \{\frac{1}{i},R_0/128\}, \ v_i$ is not $(\eta,\sigma_i,k+1)$-homogeneous at $0$, $O_Y\in v_i(B_{2\sigma_i}(0))$, and $W^{2\sigma_i}_{\rho\sigma_i}(0,v_i)<\frac{1}{i}.$ Additionally, there exists an orthonormal set $\{s_1^i,...,s_{k+1}^i\}$ so that 
    \[
    \sigma_i^{2-n} \int_{B_{5\sigma_i/4}(0)\setminus B_{3\sigma_i/4}(0)} \sum_{i=1}^{k+1} |\partial_{s^i_j}v_i|^2 <\frac{1}{i}.
    \]
    Then the blow-up maps $\tilde{v}_i:B_1(0) \to (Y^{N-j}_C,d_i=\frac{1}{\lambda_i}d)$ where \\ $\lambda_i=((128\sigma_i)^{1-n}I_{\phi,v_i}(0,128\sigma_i))^{\frac{1}{2}}$ and $\tilde{v}_i(x)=v_i(128\sigma_i x)$ satisfy:
    \begin{itemize}
        \item $\tilde{v}_i$ is not $(\eta,\frac{1}{128},k+1)$-homogeneous at $0$
        \item $\BDOrd_{\phi,\tilde{v}_i}(0,\frac{1}{64})-\BDOrd_{\phi,\tilde{v}_i}(0,\frac{\rho}{128})<\frac{1}{i}$
        \item $\displaystyle\int_{B_{5/512}(0)\setminus B_{3/512}(0)} \sum_{j=1}^{k+1}|\partial_{s_j^i}\tilde{v}_i|^2<\frac{1}{i}$ for some orthonormal set $\{s_1^i,...,s_{k+1}^i\}.$
    \end{itemize}
    By Lemma \ref{compactness}, an unrelabeled subsequence $\tilde{v}_i \to v_{\infty}$ uniformly on $B_{1/64}(0)$ in the pullback sense, where $v_{\infty}:B_{1/64}(0) \to Y_{\infty}$ is a harmonic map and the (directional) energy density measures of the $\tilde{v}_i$'s converge weakly to those of $v_{\infty}$. By choosing a further subsequence if necessary, we have
    \begin{itemize}
        \item [(i)]$\BDOrd_{\phi,v_{\infty}}(0,\frac{1}{64})-\BDOrd_{\phi,v_{\infty}}(0,\frac{\rho}{128})=0,$
        \item[(ii)] $\displaystyle\int_{B_{5/512}(0)\setminus B_{3/512}(0)} \sum_{j=1}^{k+1}|\partial_{s_j}v_{\infty}|^2=0$ where $\{s_1^i,...,s_{k+1}^i\}$ converges to $\{s_1,...,s_{k+1}\}.$
    \end{itemize}
Statement (i) implies $v_{\infty}$ is homogeneous with respect to $0$ on $B_{1/64}(0)$ from \cite[Lemma 5.4]{bd}. Statement (ii) implies that $v_{\infty}|_{B_{r}(y)}$ does not depend on the directions $s_1,...,s_{k+1}$ for any $B_{r}(y) \subset B_{5/512}(0)\setminus B_{3/512}(0)$ since $\partial_{s_j}v_{\infty}\equiv 0$ a.e. for any $i=1,...,k+1.$ By Lemma \ref{bd6.3}, $v_{\infty}$ is independent of $s_1,...,s_{k+1}$ everywhere on $B_{1/64}(0).$ So since $v_{\infty}$ is homogeneous about $0$ and is invariant with respect to the subspace spanned by $\{s_1,...,s_{k+1}\},$ we know $v_{\infty}$ is $(k+1)$-homogeneous at $0.$  This contradicts that $\tilde{v}_i$ is not $(\eta, 1/128,k+1)$-homogeneous at $0$ for sufficiently large $i.$
\end{proof}

\begin{lemma}\label{bd6.2}
    Let $u$ be a harmonic map to an $N$-dimensional conical DM-complex $Y_C$ with splitting $u=(V,v):B_{\sigma_j}(0) \to (\mathbb{R}^j \times Y_C^{N-j},d_G)$ satisfying $\Ord_{\phi,v}(0,R_0)\leq \Lambda,$ and  $\sup_{B_{\sigma_j}(0)}|\nabla u| \leq M.$ For any $\eta>0, \ \rho\in(0,1),$ there is a constant $C=C(n,\rho,\eta,M,Y_C,\Lambda)$ for which the following holds. 

    Let $\sigma_{\ref{bd6.2}}=\min\{R_1,\sigma_{\ref{toolformainflatness}}\}$, and assume $0<\sigma<\sigma_{\ref{bd6.2}}$. Suppose that $\mu$ is a finite non-negative Radon measure and $v$ is not $(\eta,\sigma,k+1)$-homogeneous at $x \in B_{R_0/64}(0)$. If $W_{\rho\sigma}^{2\sigma}(x,v)<\delta_{\ref{toolformainflatness}}$ and $O_Y\in v(B_{2\sigma}(x))$, then 
    \[D_{\mu}^k(x,\sigma/8) \leq \frac{C
    \sigma^{1-n}I_{\phi,v}(x,\sigma)}{\sigma^k} \paren{\int_{B_{\sigma/8}(x)}W_{\sigma/8}^{4\sigma}(y) \, d\mu(y)+\sigma}.\]
\end{lemma}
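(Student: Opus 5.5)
We follow the scheme of \cite[Proposition 6.2]{bd} and \cite[Proposition 6.2]{dees}: bound the mean flatness by the small eigenvalues of the covariance form of $\mu$, and bound these eigenvalues by the pinching via a directional-derivative identity. Throughout we identify $Y_C^{N-j}$ with its embedding in $\RR^N$, as in \eqref{eq:2}. Let $\bar x:=\mu(B_{\sigma/8}(x))^{-1}\int_{B_{\sigma/8}(x)}y\,d\mu(y)$ be the barycenter and let $b(w,w')=\int_{B_{\sigma/8}(x)}((y-\bar x)\cdot w)((y-\bar x)\cdot w')\,d\mu(y)$. Take eigenvalues $\lambda_1\geq\cdots\geq\lambda_n$ with orthonormal eigenvectors $s_1,\dots,s_n$. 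By the formula after the definition of mean flatness, $D^k_\mu(x,\sigma/8)\leq C\sigma^{-k-2}\sum_{\ell\geq k+1}\lambda_\ell\leq C\sigma^{-k-2}(n-k)\lambda_{k+1}$. Since $\lambda_{k+1}\leq\lambda_j$ for $j\leq k+1$, it suffices to prove
\[
\lambda_{k+1}\sum_{j=1}^{k+1}\int_{A}|\partial_{s_j}v|^2\,dz\leq C\,\sigma^{2}\,I_{\phi,v}(x,\sigma)\paren{\int W_{\sigma/8}^{4\sigma}\,d\mu+\sigma}, \qquad A:=B_{5\sigma/4}(x)\setminus B_{3\sigma/4}(x).
\]
Given this, Lemma \ref{toolformainflatness} applies (it is translated to center $x$; its hypotheses hold because $v$ is not $(\eta,\sigma,k+1)$-homogeneous at $x$, $W^{2\sigma}_{\rho\sigma}(x)<\delta_{\ref{toolformainflatness}}$ and $O_Y\in v(B_{2\sigma}(x))$). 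It gives $\sum_j\int_A|\partial_{s_j}v|^2\geq\delta\sigma^{n-2}$. Dividing by this yields $\lambda_{k+1}\leq C\sigma^{4-n}I_{\phi,v}(x,\sigma)(\cdots)$, which is the claim.

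\textbf{Key pointwise identity.} For $z\in A$ and $y\in B_{\sigma/8}(x)$, write
\[
(y-z)\cdot\nabla v(z)-\alpha v(z)=\bigl(|z-y|\partial_{r_y}v(z)-\BDOrd_{\phi,v}(y,|z-y|)v(z)\bigr)+\bigl(\BDOrd_{\phi,v}(y,|z-y|)-\alpha\bigr)v(z),
\]
up to sign conventions, where $\alpha$ is a constant to be chosen, e.g.\ the $\mu$-average of $\BDOrd_{\phi,v}(\cdot,\sigma)$. Averaging in $y$ against $\mu$ eliminates the $y$-linear part. Specifically, $\int (y-\bar x)\cdot\nabla v(z)\,((y-\bar x)\cdot w)\,d\mu(y)=b(\nabla v(z),w)$. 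Taking $w=s_j$ and using Cauchy--Schwarz gives $\lambda_j^2|\partial_{s_j}v(z)|^2\leq\lambda_j\int |(y-\bar x)\cdot\nabla v(z)-\text{const}_z|^2\,d\mu(y)$. Here $\text{const}_z=(z-\bar x)\cdot\nabla v(z)+\alpha v(z)$ is independent of $y$, and it drops out because $\int(y-\bar x)\,d\mu=0$. Summing over $j\leq k+1$ and integrating over $z\in A$ yields
\[
\lambda_{k+1}\sum_{j\leq k+1}\int_A|\partial_{s_j}v|^2\leq C\int\!\!\int_A\Bigl(\zeta_y(z)+|\BDOrd_{\phi,v}(y,|z-y|)-\alpha|^2d^2(v(z),P_0)\Bigr)\,dz\,d\mu(y).
\]

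\textbf{Bounding the two terms.} The first term is controlled by Lemma \ref{lemma:d5.3} with $R=4\sigma$ and center $y$. Since $A\subset B_{2\sigma}(y)\setminus B_{\sigma/2}(y)$, and $\theta R$ can be chosen so that $\theta R/2\geq\sigma/8$, we get
\[
\int_A\zeta_y\leq C\sigma I_{\phi,v}(y,4\sigma)\bigl(W^{4\sigma}_{\sigma/8}(y)+\sigma^2\bigr).
\]
Here $\zeta_y$ is defined with the radius-dependent $\BDOrd_{\phi,v}(y,|z-y|)$, as in the proof of that lemma. For the second term, split $|\BDOrd(y,|z-y|)-\alpha|$ into two pieces:
\begin{itemize}
\item $|\BDOrd(y,|z-y|)-\BDOrd(y,\sigma)|\leq W^{4\sigma}_{\sigma/8}(y)+C\sigma$, by the computation of $\calE_4$ in Lemma \ref{comparefreq};
\item $|\BDOrd(y,\sigma)-\alpha|$, which is bounded by Lemma \ref{comparefreq} applied to pairs $y,y'$ in the support of $\mu$. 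This gives a bound by $C(W(y)+W(y')+\sigma)^{1/2}$, so its square, averaged over $\mu$, is $\leq C(\int W\,d\mu/\mu(B)+\sigma)$.
\end{itemize}
We also use $\int_A d^2(v,P_0)\leq C\sigma I_{\phi,v}(x,\sigma)$, from \eqref{intdbound} and Lemma \ref{monotoneheight}. Finally, we convert $I_{\phi,v}(y,4\sigma)$ to $CI_{\phi,v}(x,\sigma)$ using Lemma \ref{monotoneheight} and \eqref{eqn:6} with the order bound of Lemma \ref{ineq:ord}.

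\textbf{Main obstacle.} The delicate point is the normalization in $\mu$-mass for the second term. A naive Cauchy--Schwarz produces a factor $\mu(B_{\sigma/8}(x))$, which must cancel against the same factor hidden in $b$. Following \cite{dees}, I would handle this by writing $\lambda_j s_j=\int (y-\bar x)((y-\bar x)\cdot s_j)\,d\mu$ directly. I would then pair this with $\BDOrd(y,\cdot)v(z)$, subtracting the $\mu$-average exactly rather than a pairwise comparison, so that only the quantities $W(y)$ integrated against $d\mu$ appear. The extra $+\sigma$ absorbs every error coming from $v$ being merely asymptotically harmonic.
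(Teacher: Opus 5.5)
Your proposal is essentially the paper's proof. It uses the same barycenter/eigenvector Cauchy--Schwarz bound $\lambda_j|\partial_{s_j}v|^2\le\int|\cdot|^2\,d\mu$, the lower bound from Lemma~\ref{toolformainflatness}, and the choice of $\alpha$ as the $\mu$-average of $\BDOrd_{\phi,v}(\cdot,\sigma)$. It also splits the error the same way: the $\zeta$-term (Lemma~\ref{lemma:d5.3}), the radius-variation term, and the center-variation term (Lemma~\ref{comparefreq}).

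Two bookkeeping corrections. First, your reduction should ask for $\lambda_{k+1}\sum_{j\le k+1}\int_A|\partial_{s_j}v|^2\le C\sigma\, I_{\phi,v}(x,\sigma)(\cdots)$, not $C\sigma^2 I_{\phi,v}(x,\sigma)(\cdots)$. The $\sigma^1$ version is what your estimates actually deliver, and it is exactly what the $\sigma^{1-n-k}$ bound needs; the $\sigma^2$ version would give a stronger bound you do not prove. Second, the error terms come out as $\sigma\,\mu(B_{\sigma/8}(x))$ rather than $\sigma$. The $\lambda_j$ cancellation already handles the main term, and no averaging trick removes the mass factor from the $O(\sigma)$ errors; the paper's own proof also passes over this point.
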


\begin{proof}
    Let $x=0$ by translation. Let $\bar{x}$ be the barycenter of $\mu$ in $B_{\sigma/8}(0)$ and let $\{s_1,...,s_m\}$ diagonalize the bilinear form $b(v,w):=\int_{B_{\sigma/8}(0)}((x-\bar{x})\cdot v)((x-\bar{x})\cdot w) \, d\mu,$ with corresponding eigenvalues $0 \leq \lambda_m \leq ... \leq \lambda_1.$ Then the definition of barycenter and \cite[(6.1)]{dees} imply that for each $s_j$ and every $y \in B_{\sigma/4}(0)$
    \[\nabla v(y) \circ (-\lambda_js_j)=\int_{B_{\sigma/8}(0)}((x-\bar{x})\cdot s_j)(\nabla v\circ(y-x)-\alpha v(y)) \, d\mu(x). \]
    Square both sides and apply \cite[(6.1)]{dees} and definition of barycenter: For some constant $\alpha>0,$
    \begin{align*}
        \lambda_j^2|\partial_{s_j}v|^2 (y)&\leq \left(\int_{B_{\sigma/8}(0)}\big|(x-\bar{x})\cdot s_j\big| \left|\nabla v(y)\circ(y-x)-\alpha v(y)\right| \, d\mu(x) \right)^2\\
        &\leq \int_{B_{\sigma/8}(0)} ((z-\bar{x})\cdot s_j)^2 \, d\mu(z) \ \ \cdot \ \ \int_{B_{\sigma/8}(0)} \left|\nabla v(y)\circ(y-x)-\alpha v(y)\right|^2 d\mu(x)\\
        &= \lambda_j \int_{B_{\sigma/8}(0)} \left|\nabla v(y)\circ(y-x)-\alpha v(y)\right|^2 \, d\mu(x).
    \end{align*}
    \begin{align*}
        \lambda_j|\partial_{s_j}v|^2(y)&\leq \int_{B_{
        \sigma/8}(0)} \left|\nabla v(y)\circ(y-x)-\alpha v(y)\right|^2 \, d\mu(x).
    \end{align*}
    Sum for $j=1$ to $k+1$ and integrate over $y \in B_{5\sigma/4}(0)\setminus B_{3\sigma/4}(0),$
    {\small\begin{align*}
        D_{\mu}^k(0,\sigma/8)&\int_{B_{5\sigma/4}(0)\setminus B_{3\sigma/4}(0)}\sum^{k+1}_{j=1} \left| \partial_{s_j} v(y) \right|^2 \, dy =\left(\frac{\sigma}{8}\right)^{-k-2} \int_{B_{5\sigma/4}(0)\setminus B_{3\sigma/4}(0)} \paren{\sum_{i=k+1}^n\lambda_i}\sum^{k+1}_{j=1} \left|\partial_{s_j} v(y)\right|^2 \, dy\\
        &\leq C\sigma^{-k-2}\int_{B_{5\sigma/4}(0) \setminus B_{3\sigma/4}(0)}\lambda_{k+1}\sum^{k+1}_{j=1} \left| \partial_{s_j} v(y)\right|^2 \, dy\\
        &\leq C\sigma^{-k-2}\int_{B_{5\sigma/4}(0) \setminus B_{3\sigma/4}(0)}\sum^{k+1}_{j=1} \lambda_j \left| \partial_{s_j} v(y)\right|^2 \, dy\\  
        &\leq C\sigma^{-k-2} \int_{B_{5\sigma/4}(0)\setminus B_{3\sigma/4}(0)} \int_{B_{\sigma/8}(0)} \left|\nabla v(y)\circ(y-x)-\alpha v(y)\right|^2 \, d\mu(x) \, dy,
    \end{align*}}
    where $C$ denotes a generic constant depending only on $(n,\rho,\eta,M,Y_C,\Lambda)$ for simplicity.

    Then, Lemma \ref{toolformainflatness} gives us
    \begin{align*}
        D^{k}_{\mu}(0,\frac{\sigma}{8}) &\leq \ C \sigma^{-k-n}\int_{B_{\sigma/8}(0)} \int_{B_{3\sigma/2}(x) \setminus B_{\sigma/2}(x)}\left|\nabla v(y)\circ(y-x)-\alpha v(y)\right|^2 \, dy \, d\mu(x)\\
        &\leq C\sigma^{-k-n}\int_{B_{\sigma/8}(0)}\int_{B_{3\sigma/2}(x) \setminus B_{\sigma/2}(x)} \left| \nabla v(y)\circ(y-x)-\BDOrd_{\phi,v}(x,\sigma)v(y) \right|^2 \, dy \, d\mu(x)\\
        &+ C \sigma^{-k-n}\int_{B_{\sigma/8}(0)} \int_{B_{3\sigma/2}(x)\setminus B_{\sigma/2}(x)}(\BDOrd_{\phi,v}(x,\sigma)-\alpha)^2|v(y)|^2 \, dy \, d\mu(x)\\
        &\leq C\sigma^{-k-n}\int_{B_{\sigma/8}(0)}\int_{B_{3\sigma/2}(x) \setminus B_{\sigma/2}(x)} \left| \nabla v(y)\circ(y-x)-\BDOrd_{\phi,v}(x,\sigma)v(y) \right|^2 \, dy \, d\mu(x)\\
        &+ C\sigma^{-k-n} \int_{B_{\sigma/8}(0)} \int_{B_{3\sigma/2}(x)}(\BDOrd_{\phi,v}(x,\sigma)-\alpha)^2|v(y)|^2 \, dy \, d\mu(x)\\
        =&: \ I + II.
    \end{align*}
    Let $\alpha:=\frac{1}{\mu(B_{\sigma/8}(0))} \int_{B_{\sigma/8}(0)}\BDOrd_{\phi,v}(y,\sigma) \, d\mu(y).$ Assume that $\sigma<R_1$. Since $\int_{B_{3\sigma/2}(x)}|v(y)|^2 dy$ can be bounded by $C\sigma I_{\phi,v}(x,3\sigma)$ for some constant $C$ by \eqref{intdbound}, which can then be bounded by $C\sigma I_{\phi,v}(0,12\sigma)$ by Lemma \ref{monotoneheight}, which can then be bounded by $C\sigma I_{\phi,v}(0,\sigma)$ by \eqref{eqn:6}, we have from \eqref{eq:pinchingbound},
    \begin{align*}
        II &\leq C\sigma^{-k+1-n}I_{\phi,v}(0,\sigma) \int_{B_{\sigma/8}(0)}\left(\BDOrd_{\phi,v}(x,\sigma)-\frac{1}{\mu(B_{\sigma/8}(0))}\int_{B_{\sigma/8}(0)}\BDOrd_{\phi,v}(y,\sigma) \, d\mu(y)\right)^2 \, d\mu(x)\\
        &\leq \frac{C\sigma^{-k+1-n}I_{\phi,v}(0,\sigma)}{\mu(B_{\sigma/8}(0))} \int_{B_{\sigma/8}(0)} \int_{B_{\sigma/8}(0)} \left( \BDOrd_{\phi,v}(x,\sigma)-\BDOrd_{\phi,v}(y,\sigma)\right)^2 \, d\mu(y) \, d\mu(x)\\
        &\leq \frac{C\sigma^{-k+1-n}I_{\phi,v}(0,\sigma)}{\mu(B_{\sigma/8}(0))} \int_{B_{\sigma/8}(0)} \int_{B_{\sigma/8}(0)} \left( W(x)+W(y)+\sigma \right) \, d\mu(y) \, d\mu(x)\\
        &= C\sigma^{-k+1-n}I_{\phi,v}(0,\sigma) \left( 2\int_{B_{\sigma/8}(0)} W(x) d\mu(x)+\sigma \right).
    \end{align*}
    By the triangle inequality,
    \begin{align*}
     I \leq& \ C \sigma^{-k-n}\int_{B_{\sigma/8}(0)} \int_{B_{3\sigma/2}(x) \setminus B_{\sigma/2}(x)} \left| \BDOrd_{\phi,v}(x,\sigma)-\BDOrd_{\phi,v}(x,|y-x|)\right|^2 |v(y)|^2 \, dy \, d\mu(x)\\
     &+C \sigma^{-k-n}\int_{B_{\sigma/8}(0)} \int_{B_{3\sigma/2}(x)\setminus B_{\sigma/2}(x)} \left| \nabla v(y)\circ(y-x)-\BDOrd_{\phi,v}(x,|y-x|)v(y)\right|^2 \, dy \, d\mu(x)\\
     =& \ I_1+I_2.
    \end{align*}
    As $r \mapsto \Ord_{\phi,v}(x,r)$ is nondecreasing and $\Ord_{\phi,v}(0,R_0)\leq \Lambda,$ it follows that for $x \in B_{\sigma/8}(0)$ and $\frac{\sigma}{2}<|y-x|<\frac{3\sigma}{2},$
    \begin{align*}
    |\BDOrd_{\phi,v}(x,\sigma)-\BDOrd_{\phi,v}(x,|y-x|)| &\leq |W_{|y|}^{\sigma}(x,v)|+A(\sigma^2-|y|^2)+A(\sigma-|y|)\Ord_{\phi,v}(x,|y-x|)\\
    &\leq W_{\sigma/2}^{3\sigma/2}(x,v)+A(1-1/64)\sigma^2+A(1-1/8)\sigma\Ord_{\phi,v}(x,\sigma)\\
    & \leq C \,( W_{\sigma/8}^{4\sigma}(x,v)+\sigma).
    \end{align*}
    Thus,
    \begin{align*}
    I_1 \leq& \ C\sigma^{-k-n} \int_{B_{\sigma/8}(0)} \int_{B_{3\sigma/2}(x)\setminus B_{\sigma/2}(x)} |W^{4\sigma}_{\sigma/8}(x)+\sigma|^2 \left|v(y)\right|^2 \, dy d\mu(x) \\
    \leq & \ C \sigma^{-k+1-n}I_{\phi,v}(0,\sigma)\int_{B_{\sigma/8}(0)} |W^{4\sigma}_{\sigma/8}(x)+\sigma|^2 \, d\mu(x)\\
    \leq & \ C\sigma^{-k+1-n}I_{\phi,v}(0,\sigma)\paren{\int_{B_{\sigma/8}(0)} W^{4\sigma}_{\sigma/8}(x) \, d\mu(x)+\sigma}.
    \end{align*}
    Let $R=3\sigma$ and $\theta'=\frac{1}{6}.$ By the argument of Lemma \ref{lemma:d5.3}, \eqref{eqn:6}, Lemma \ref{monotoneheight}, and 
    Lemma \ref{monotoneorder},
    \begin{align*}
        I_2 \leq& \ C \sigma^{-n-k}\int_{B_{\sigma/8}(0)} 3\sigma I_{\phi,v}(x,3\sigma)(W^{3\sigma}_{\sigma/4}(x)+(3\sigma)^2) \, d\mu(x) \\
        \leq& \ C \sigma^{-k+1-n}I_{\phi,v}(0,\sigma)\paren{\int_{B_{\sigma/8}(0)}W^{4\sigma}_{\sigma/8}(x)\, d\mu(x)+\sigma}.
    \end{align*}
    Combining together, we have
    \[
    D^{m-2}_{\mu}(0,\frac{\sigma}{8}) \leq C\sigma^{-k+1-n}I_{\phi,v}(0,\sigma)\paren{ \int_{B_{\sigma/8}(0)} W^{4\sigma}_{\sigma/8}(x) \, d\mu(x)+\sigma}.
    \]
\end{proof}

\section{Main Results}\label{sec:mainresults}
\subsection{Local Results}We will give the local statements with respect to the conical DM-complexes by using the covering theorem in Appendix.
\begin{theorem}\label{bd:4.1}
    Let $u$ be a harmonic map to an $N$-dimensional conical DM-complex $Y_C$ with splitting $u=(V,v):B_{\sigma_j}(0) \to (\mathbb{R}^j \times Y_C^{N-j},d_G)$ satisfying $\Ord_{\phi,v}(0,R_0) \leq \Lambda$ and $\sup_{B_{\sigma_j}(0)}|\nabla u| \leq M.$ Then for any $0<\eta<\eta_1,$ where $\eta_1$ is defined as in Lemma~\ref{bd5.12}, there are constants $\sigma, C=(\sigma,C)(n,\eta,Y_C,M,\Lambda)>0,$ such that for any $0<r<\sigma/8$, we have the Minkowski-type estimate
    \[
    \left|B_r(\calS_{0,\eta}^k(v)\cap B_{\sigma/8}(0))\right|\leq C r^{n-k}.
    \]
\end{theorem}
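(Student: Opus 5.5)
The plan is to follow the Naber--Valtorta covering scheme as implemented in \cite{dmsv} and \cite{dees,bd}, using the frequency pinching $W$ from Lemma~\ref{monotoneorder} as the monotone quantity. Choose $\rho$ small depending on $n$, and then $\delta$ and $\sigma$ smaller than all thresholds $\sigma_{\ref{bd5.8}},\delta_{\ref{bd5.8}},\sigma_{\ref{bd5.10}},\delta_{\ref{bd5.10}},\sigma_{\ref{bd5.13}},\delta_{\ref{bd5.13}},\sigma_{\ref{bd5.14}},\delta_{\ref{bd5.14}},\sigma_{\ref{bd6.2}}$, $R_1$, and $R_0/64$. By Lemma~\ref{ineq:ord}, every point of $B_{\sigma}(0)$ has $\Ord_{\phi,v}(x,r)\le \Lambda':=C(\Lambda+1)$ for $r\le\sigma$.

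Since $\Ord_{\phi,v}(x,\cdot)$ is nondecreasing, the total pinching $\sum_{i} W^{r_{i-1}}_{r_i}(x)$ along the dyadic scales $r_i=\rho^i\sigma$ is bounded by $\Lambda'+$const. Hence every $x$ has at most $K(\Lambda',\delta)$ ``bad'' scales where $W\ge\delta$. The heart of the proof is the main covering lemma, the discrete Reifenberg step, as in the Appendix. Take the family of balls $B_{r_x}(x)$ with $x\in\calS^k_{0,\eta}(v)\cap B_{\sigma/8}(0)$. Put $\mu=\sum_x\omega_k r_x^k\delta_x$, restricted to a disjoint (Vitali) subfamily of radius-$r_x/5$ balls. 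We need to show $\sum r_x^k\le C(n)\sigma^k$. This packing estimate comes from the discrete Reifenberg theorem of Naber--Valtorta, which needs
\[
\sum_{r_i\le s}\int_{B_s(y)} D^k_{\mu}(z,r_i)\,d\mu(z)\le \delta_0 s^k .
\]
We bound $D^k_\mu$ using Lemma~\ref{bd6.2}. That lemma applies at points that are not $(\eta,\cdot,k+1)$-homogeneous with small pinching, and Lemma~\ref{bd5.14} transfers non-homogeneity from a point of $\calS^k_{0,\eta}$ to nearby pinched points, with loss $\theta$. The factor $\sigma^{1-n}I_{\phi,v}/\sigma^k$ must be normalized, by rescaling so that $r^{1-n}I_{\phi,v}(x,r)$ is comparable to $1$ via \eqref{eqn:6}. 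Summing over scales, the telescoping bound on $\int W\,d\mu$ and the geometric series in the extra $+\sigma$ term (which sums because $\sum_i r_i<\infty$) give the Reifenberg hypothesis.

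Before applying this, the set has to be decomposed. Induct downward on the value $\Gamma$ of $\sup\Ord_{\phi,v}$ in a ball, in steps of $\epsilon$. In each ball $B_{r}(x)$ there are two alternatives. In the first, the high-frequency set $F=\{\Ord_{\phi,v}(\cdot,\rho r)>\Gamma-\delta\}$ $\rho r$-effectively spans a $k$-plane $V$. Then Lemma~\ref{bd5.10} shows frequency drops by at most $\epsilon$ along $V$, and Lemma~\ref{bd5.8} (with Lemma~\ref{bd5.13}) puts $\calS^k_{0,\eta,\delta r}\cap B_{2r}$ within $B_{2\rho r}(V)$. Covering $B_{2\rho r}(V)\cap B_r$ then needs $C(n)\rho^{-k}$ balls of radius $\rho r$. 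In the second alternative, $F$ lies near a $(k-1)$-plane. Its covering number is then $C\rho^{1-k}$, which is summable, and off $F$ the frequency has dropped by $\delta$. Because frequency is bounded by $\Lambda'$, a ball is in the ``drop'' alternative at most $\Lambda'/\delta$ times along any chain. This yields a tree of good and bad balls. Combined with the Reifenberg packing applied at each good stage, it gives $\sum r_i^k\le C\sigma^k$ for a final cover of $\calS^k_{0,\eta}\cap B_{\sigma/8}(0)$ by balls of radius $\ge r$. The Minkowski estimate follows, since $|B_r(\cdot)|\le\sum \omega_n(2r_i)^n\le Cr^{n-k}\sum r_i^k$ once every ball in the final cover has radius comparable to $r$. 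Every quantitative point in $\calS^k_{0,\eta}$ also belongs to $\calS^k_{0,\eta,r}$, which is what the lemmas require.

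The main obstacle will be the Reifenberg packing step. Lemma~\ref{bd6.2} carries the additive error $+\sigma$ and the height normalization factor, unlike the clean bounds in \cite{dees}, and the pinching bounds hold only at the shifted scales $W^{4\sigma}_{\sigma/8}$. I will first try to absorb the error by proving $\sum_i\int D^k_\mu(z,r_i)\,d\mu\le C\int\sum_i W^{4r_i}_{r_i/8}\,d\mu + C\sum_i r_i\,\mu(B_s)$. The first sum telescopes up to bounded overlap of the scale windows, giving $\le C\Lambda'\mu(B_s)$. Since the $D^k$ bound carries the small pinching factor only after restricting to pinched scales, I will split the scales into the at most $K$ bad ones, which are handled by the trivial bound $D^k_\mu\le C\mu(B)/r^k$, and the good ones, which use Lemma~\ref{bd6.2}. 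Choosing $\delta$ and $\sigma$ small relative to the Reifenberg constant $\delta_0(n)$ then closes the induction on $\mu(B_s)\le C s^k$.
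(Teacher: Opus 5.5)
Your overall scheme matches the paper's. The paper proves Theorem~\ref{bd:4.1} by applying the covering Theorem~\ref{bd:7.1} with $S=\sigma/8$, $s=r$. It keeps the balls with $s_x=r$ and re-covers the others, on which $\Ord_{\phi,v}$ has dropped by $\delta$. After $m=\lceil\Gamma/\delta\rceil$ rounds every ball has radius $r$ and $\sum r^k\le C(n)^m$. The Reifenberg machinery you sketch is what sits inside Theorem~\ref{bd:7.1} and Proposition~\ref{bd:A2}.

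The gap is in how you verify the hypothesis of Theorem~\ref{bd:A1}. That hypothesis requires $\int_{B_t(y)}\int_0^tD^k_\mu(z,s)\frac{ds}{s}\,d\mu(z)\le\delta_R^2t^k$ with $\delta_R=\delta_R(n)$ \emph{small}, and neither of your bounds gives smallness.
\begin{itemize}
\item Telescoping $\sum_iW^{4r_i}_{r_i/8}(z)$ against the a priori bound $\Ord_{\phi,v}\le\Lambda'$ only gives $C\Lambda'\mu(B_s)$, which is of size $s^k$.
\item Your repair makes this worse. On a ``bad'' scale the trivial bound $D^k_\mu(z,r)\le r^{-k}\mu(B_r(z))\le C$ contributes a fixed multiple of $\mu(B_s(y))$. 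So the bad scales give $CKs^k$ with $K\sim\Lambda'/\delta$, which \emph{grows} as $\delta\to0$.
\item On the good scales, having $W<\delta$ at each scale does not make the sum over the $\sim\log(s/r_z)$ good scales small. You are back at $\Lambda'$.
\end{itemize}
No choice of $\delta,\sigma$ closes the induction this way.

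The missing idea is that a Reifenberg stage contains no bad scales at all. Smallness comes from the \emph{selection of the centers}, not from counting scales. The balls fed into $\mu$ belong to a stage in which no frequency drop has occurred. So each center $z$ has its frequency trapped in a window of width $\delta$ over the entire range of scales:
\begin{itemize}
\item $\Ord_{\phi,v}(z,c\,r_z)>\Gamma-\delta$, which is what the high-frequency set $F$ together with Lemma~\ref{bd5.10} provides;
\item $\Ord_{\phi,v}(z,8S)\le\Gamma$, by the definition of $\Gamma$.
\end{itemize}
Hence $W^{8S}_{c\,r_z}(z)<\delta$. By monotonicity and bounded overlap of the windows $[r_i/8,4r_i]$, this gives $\sum_iW^{4r_i}_{r_i/8}(z)\le C(n)\delta$. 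It also secures the hypothesis $W^{2r}_{\rho r}<\delta_{\ref{toolformainflatness}}$ of Lemma~\ref{bd6.2} at every center and every scale at once.

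Only these controlled windows enter. Disjointness of the balls $B_{r_j/5}(x_j)$ forces $r_y\lesssim t$ for $y\in B_t(z)\cap\operatorname{supp}\mu$, and $D^k_\mu(z,t)=0$ for $t<r_z/5$.

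With the inductive density bound, Lemma~\ref{bd6.2} then gives $\int_{B_s(y)}\sum_iD^k_\mu(z,r_i)\,d\mu\le C(\delta+s)\mu(B_{2s}(y))\le C(\delta+\sigma)s^k\le\delta_R^2s^k$.

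Points whose frequency drops are never put into the Reifenberg sum. The covering stops there and restarts at level $\Gamma-\delta$, which is exactly the outer iteration in the paper's proof. Consequently $\Lambda$ enters only through the number of rounds $m$, not through the Reifenberg constant.
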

\begin{proof}
    Our approach is analogous to the proof of \cite[Theorem 4.1]{bd}. Fix any $0 <\eta <\eta_1(n)$ and $\sigma \in (0,\sigma_{\ref{bd:7.1}}]$ where $\eta_1(n)$ and $\sigma_{\ref{bd:7.1}}$ are chosen so that Theorem \ref{bd:7.1} applies. Denote $D_0=\calS_{0,\eta}^k(v) \cap B_{\sigma/8}(0).$ Lemma \ref{ineq:ord} implies that for any $y \in B_{\sigma/8}(0), \ \Gamma:=\sup_{y \in D_0} \Ord_{\phi,v}(y,\sigma)\leq \sup_{y \in D_0} C(\Ord_{\phi,v}(0,16\sigma)+1)\leq C(\Lambda).$ \ We want to prove the Minkowski estimate on $B_r(D_0)$ by repeatedly applying the covering theorem at smaller and smaller scales.

    The initial step is to apply Theorem \ref{bd:7.1} by letting $S=\frac{\sigma}{8}, \, s=r, D=D_0$ and $\{B_{s_x}(x)\}_{x\in\calC(1)}$ be the corresponding cover with $D_x \subseteq D_0.$ It follows that
    \[
    \sum_{x \in \calC(1)}(s_x)^k \leq C.
    \]
    If $s_x=r,$ we stop refining $B_{s_x}(x)$ and define $x \in \mathcal{G}(1):=\{x:s_x=r\}$ to be the good points. Otherwise, define $\calB(1):=\calC(1) \setminus \mathcal{G}(1)$ to be the set of bad points with
    \[
    \Gamma_x:=\sup_{y \in D_x} \Ord_{\phi,v}(y,8s_x)\leq \Gamma-\delta.
    \]
    We further refine $\calB(1)$ by applying Theorem \ref{bd:7.1}. For each $x \in \calB(1),$ let $S=s_x,s=r,$ and $D=D_x.$ We have the refined covering $\{B_{s_y}(y)\}_{y\in\calC(2)_x}$ and the corresponding decomposition of $D_x$ into $D_{x,y}=:D_y.$ Theorem \ref{bd:7.1} implies that if $s_y \neq r,$ then
    \[
    \sup_{z \in D_{x,y}} \Ord_{\phi,v}(z,8s_y) \leq \Gamma_x-\delta \leq \Gamma-2\delta.
    \]
    Now we have the covering $\calC(2):=\mathcal{G}(1) \bigcup \cup_{x \in \calB(1)}\calC(2)_x.$ By Theorem \ref{bd:7.1}, we have
    \[
    \sum_{x \in \calC(2)}(s_x)^k \leq C\sum_{x \in \calC(1)}(s_x)^k \leq C^2.
    \]
    Again, we define $\mathcal{G}(2):=\{x \in \calC(2):s_x=r\}$ and $\calB(2):=\calC(2)\setminus\mathcal{G}(2).$ By repeating the same procedure inductively, we have a covering $\{B_{s_x}(x)\}_{x \in\calC(m)}$ such that
    \[
    \sum_{x \in \calC(m)}(s_x)^k \leq C^m
    \]
    and for each $x \in \calC(m),$ either $s_x=r$ or
    \[
    \sup_{y \in D_x}\Ord_{\phi,v}(y,8s_x) \leq \Gamma-m\delta.
    \]
    We want to get $m\geq \lceil \frac{\Gamma}{\delta} \rceil,$ which is the number of steps only depending on $n,\eta,Y_C,\Lambda.$ At the least such $m,$ we have the covering $\{B_r(x)\}_{x \in\calC(m)}$ of $D_0$ such that
    \[
    \sum_{x \in \calC(m)} r^k \leq C^m.
    \]
    Because $D_0 \subseteq \cup_{x \in \calC(m)}B_r(x),$ $B_r(D_0) \subseteq \cup_{x \in \calC(m)} B_{2r}(x).$ Then we have the bound of the volume of $B_r(D_0)$
    \[
    |B_r(D_0)| \leq \sum_{x \in \calC(m)} \omega_n2^nr^n=\omega_n2^nr^{n-k}\sum_{x \in \calC(m)} r^k \leq \omega_n 2^n C^m r^{n-k}.
    \]
    Let $C'=\omega_n 2^n C^m=C'(n,\eta,Y_C,\Lambda).$ Thus we have the Minkowski bound.
\end{proof}
\begin{theorem}\label{bd:4.2}
Let $u$ be a harmonic map to an $N$-dimensional conical DM-complex $Y_C$ with splitting $u=(V,v):B_{\sigma_j}(0) \to (\mathbb{R}^j \times Y_C^{N-j},d_G)$ satisfying $\Ord_{\phi,v}(0,R_0) \leq \Lambda,$ and  $\sup_{B_{\sigma_j}(0)}|\nabla u| \leq M.$ There exists $\eta_1>0$ as in Lemma \ref{bd5.12} so that if  $0<\eta<\eta_1,$ then $\calS_{0,\eta}^k(v) \cap B_{\sigma_{\ref{bd:4.1}}}(0)$ is $k$-rectifiable.
\end{theorem}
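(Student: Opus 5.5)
The plan is to follow the standard Naber--Valtorta / De Lellis--Marchese--Spadaro--Valtorta route, as adapted in \cite[Theorem 4.2]{bd} and \cite{dees}. We verify the Azzam--Tolsa criterion, Lemma \ref{rectmeanflatness}, for the measure $\mu:=\calH^k\resmes D$ with $D:=\calS^k_{0,\eta}(v)\cap B_{\sigma_{\ref{bd:4.1}}/8}(0)$. A countable union of such pieces (translating the centre over a countable dense set and using that $k$-rectifiability is countably stable) then gives the statement on $B_{\sigma_{\ref{bd:4.1}}}(0)$.

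First, Theorem \ref{bd:4.1} gives $|B_r(D)|\le Cr^{n-k}$ for all small $r$. Hence $\calH^k(D)<\infty$, and, applying the same estimate on subballs after rescaling, $\mu$ is upper Ahlfors regular: $\mu(B_r(x))\le Cr^k$ for $x\in D$ and small $r$. It therefore suffices to show
\[
\int_D\int_0^{r_0}D^k_\mu(x,s)\frac{ds}{s}\,d\mu(x)<\infty .
\]
This makes the Dini integral finite $\mu$-a.e., and measurability of $D$ follows as in \cite{bd}.

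Next, we bound the mean flatness pointwise. Let $x\in D$. Since $x\in\calS^k_{0,\eta}(v)$, $v$ is not $(\eta,s,k+1)$-homogeneous at $x$ for any $s$, and $v(x)=O_Y$. For dyadic scales $s=2^{-i}$, Lemma \ref{bd6.2} with $\sigma=8s$ gives
\[
D^k_\mu(x,s)\le C\frac{\sigma^{1-n}I_{\phi,v}(x,\sigma)}{s^k}\Big(\int_{B_{s}(x)}W^{32s}_{s}(y)\,d\mu(y)+s\Big),
\]
provided $W^{16s}_{8\rho s}(x)<\delta_{\ref{toolformainflatness}}$. The normalized height $\sigma^{1-n}I_{\phi,v}(x,\sigma)$ is bounded uniformly in $x$ and small $\sigma$, by Lemma \ref{monotoneheight} and the monotonicity in Lemma \ref{monotoneorder}. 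The pinching hypothesis fails only at finitely many dyadic scales for each $x$, with the number of failures bounded by $\Lambda'/\delta_{\ref{toolformainflatness}}$: the pinchings over disjoint dyadic windows telescope, so $\sum_i W^{2^{-i+1}}_{2^{-i-c}}(x)\le C\Ord_{\phi,v}(x,r_0)\le C(\Lambda+1)$ by Lemma \ref{ineq:ord}. At the bad scales we use the trivial bound $D^k_\mu(x,s)\le Cs^{-k}\mu(B_s(x))\le C$.

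Summing over dyadic scales and integrating in $\mu$, Fubini and the upper regularity $\mu(B_s(y))\le Cs^k$ give
\[
\int_D\sum_i D^k_\mu(x,2^{-i})\,d\mu(x)\le C\mu(D)\Big(\frac{\Lambda'}{\delta}+1\Big)+C\int_D\sum_i\big(W^{32\cdot2^{-i}}_{2^{-i}}(y)+2^{-i}\big)\,d\mu(y).
\]
The telescoping again bounds $\sum_i W(y)$ by $C(\Lambda+1)$, with bounded overlap constant, and $\sum_i 2^{-i}<\infty$. The extra $+\sigma$ error term coming from the asymptotic (non-product) metric is summable precisely because it is linear in the scale; this is the one new feature compared with \cite{bd}. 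Comparing the continuous integral $\int ds/s$ with the dyadic sum uses the standard comparison $D^k_\mu(x,s)\le CD^k_\mu(x,2s)$.

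The main obstacle is the hypothesis $O_Y\in v(B_{2\sigma}(x))$ together with the pinching condition required by Lemma \ref{bd6.2} and Lemma \ref{comparefreq}. Both are needed at all points $y$ in the support used when applying \eqref{eq:pinchingbound}, and $W$ there is evaluated at radii $4r$ versus $r/8$. I would handle this by choosing the window constants in $W$ uniformly, so that each window is contained in a bounded number of dyadic windows, and by requiring $\sigma\le\min\{R_1,\sigma_{\ref{toolformainflatness}}\}$. Then the telescoping argument controls all the pinchings simultaneously.
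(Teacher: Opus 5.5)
Your argument is correct, but it handles the scales where the pinching hypothesis of Lemma \ref{bd6.2} fails differently from the paper. The paper never meets such scales. It writes $\calS^k_{0,\eta}(v)$ as the countable union of the sets $S_i=\{x:\Ord_{\phi,v}(x,32R_0/i)-\Ord_{\phi,v}(x,0)<\delta\}$, on which $W^{32s}_s(x)<\delta$ for every $s\le R_0/i$, so the flatness bound holds at all small scales for every point of $S_i$. It then bounds $\int_0^t W^{32s}_s(\xi)\frac{ds}{s}$ by monotonicity (the continuous form of your telescoping) and applies Lemma \ref{rectmeanflatness} to each $\mu_i=\calH^k\resmes S_i$. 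You keep a single measure and count instead: by bounded overlap of the windows, each $x$ has at most $C(\rho)\Lambda'/\delta$ dyadic scales where the hypothesis fails, and there you use $D^k_\mu(x,s)\le s^{-k}\mu(B_s(x))\le C$. The paper's route is shorter and needs no estimate at bad scales. Yours yields the uniform bound $\int_D\int_0^{r_0}D^k_\mu(x,s)\frac{ds}{s}\,d\mu(x)\le C\mu(D)$ on the whole set at once, which is the quantitative form used in Naber--Valtorta type coverings. Both rely on the same upper density bound $\mu(B_r(x))\le Cr^k$ coming from the Minkowski estimate.

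Three small corrections.

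First, the $+\sigma$ term in Lemma \ref{bd6.2} must be read as $\sigma\,\mu(B_{\sigma/8}(x))$, which is what its proof produces (term $II$). With a bare $+s$ divided by $s^k$, your dyadic sum would diverge for $k\ge 2$. With the factor $\mu(B_s(x))\le Cs^k$ it becomes the summable $Cs$ you actually use.

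Second, the uniform bound on $\sigma^{1-n}I_{\phi,v}(x,\sigma)$ does not follow from monotonicity of $I_{\phi,v}(x,\cdot)$, because of the factor $\sigma^{1-n}$. Use \eqref{eq:11}, which makes $r^{1-n}I_{\phi,v}(x,r)$ almost nondecreasing, or the Lipschitz bound together with $v(x)=O_Y$.

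Third, the obstacle in your last paragraph is not there. $O_Y\in v(B_{2\sigma}(x))$ is automatic since $v(x)=O_Y$. Lemma \ref{comparefreq} and Lemma \ref{lemma:d5.3} impose no pinching hypothesis on the support points; the former simply carries $W(x_1)+W(x_2)$ in its bound. So small pinching is needed only at the center $x$, which your counting already provides.
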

\begin{proof}
    Choose $\eta_1$ as in Lemma \ref{bd5.12} so that we can invoke Theorem \ref{bd:4.1}. Set \\ $S_i:=\{x\in \calS_{0,\eta}^k:\Ord_{\phi,v}(x,32R_0/i)-\Ord_{\phi,v}(x,0)<\delta\}$ and $\mu_i:=\calH^k\resmes S_i$. For $i$ sufficiently large, we have $32R_0/i<\sigma_{ \ref{bd:4.1}}$, where $\sigma_{\ref{bd:4.1}}$ is the $\sigma$ of Theorem \ref{bd:4.1}. Then, for any $s$ with $s\leq R_0/i$ and any $x\in S_i$, we have $W_s^{32s}(x)<\delta$. Then, for any $B_{2t}(y)\subset B_{\sigma_{\ref{bd:4.1}}}(0)$, we have as in the proof of \cite[Theorem 4.2]{bd} that
    \begin{align*}
        \int_{B_t(y)}\int_0^tD_{\mu_i}^k(z,s)\frac{ds}{s}d\mu_i(z)&\leq C\int_{B_{2t}(y)}\int_0^t(W_s^{32s}(\xi)+s)\frac{ds}{s}d\mu_i(\xi)\\
        &\leq C\paren{t\mu_i(B_{2t}(y))+\int_{B_{2t}(y)}\int_0^tW_s^{32s}(\xi)\frac{ds}{s}d\mu_i(\xi)}
    \end{align*}
    Since $\Ord_{\phi,v}(\xi,s)$ is monotone nondecreasing in $s$, we can show by a method similar to Equation \cite[(8.10)]{dees} that $\int_0^tW_s^{32s}(\xi)\frac{ds}{s}$ is finite and bounded uniformly in $\xi$. Thus, we have $\int_{B_t(y)}\int_0^tD_{\mu_i}^k(z,s)\frac{ds}{s}d\mu_i(z)<\infty$, so $\int_0^tD_{\mu_i}^k(z,s)\frac{ds}{s}<\infty$ for $\mu_i$-a.e. $z\in B_t(y)$, which implies by Lemma \ref{rectmeanflatness} that $S_i\cap B_t(y)$ is $k$-rectifiable. Taking a countable cover of $B_{\sigma_{\ref{bd:4.1}}}(0)$ by such balls, we see that $S_i\cap B_{\sigma_{\ref{bd:4.1}}}(0)$ is rectifiable. Since $\{S_i\cap B_{\sigma_{\ref{bd:4.1}}}(0)\}_{i=1}^{\infty}$ form a countable cover of $\calS_{0,\eta}^k(v)\cap B_{\sigma_{\ref{bd:4.1}}}(0)$, the latter is rectifiable. 
\end{proof}
Using the previous results above, we are ready to give the $k$-rectifiability for $\calS_0^k(v) \cap B_{R_0/64}(0).$
\begin{corollary}\label{s0krect}
    In the setting of the previous lemma, $\calS_0^k(v)\cap B_{R_0/64}(0)$ is countably $k$-rectifiable.
\end{corollary}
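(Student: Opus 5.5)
The plan is to combine Lemma \ref{bd3.7} with Theorem \ref{bd:4.2} and take a countable union. By Lemma \ref{bd3.7},
\[
\calS_0^k(v)=\bigcup_{\eta>0}\calS^k_{0,\eta}(v).
\]
By definition, $\calS^k_{0,\eta}(v)$ increases as $\eta$ decreases: if $v$ is not $(\eta,s,k+1)$-homogeneous, then it is not $(\eta',s,k+1)$-homogeneous for any $\eta'<\eta$. Hence the union may be taken over the countable family $\eta=1/m$ with $m$ large enough that $1/m<\eta_1$. This gives
\[
\calS_0^k(v)\cap B_{R_0/64}(0)=\bigcup_{m\ge m_0}\calS^k_{0,1/m}(v)\cap B_{R_0/64}(0),
\]
and it remains to show that each set in this union is countably $k$-rectifiable.

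Theorem \ref{bd:4.2} only gives rectifiability inside the small ball $B_{\sigma_{\ref{bd:4.1}}}(0)$, which is centered at the base point. I will therefore localize. I will cover $B_{R_0/64}(0)$ by countably many (in fact finitely many) balls $B_{\sigma_{\ref{bd:4.1}}}(x_\ell)$, and apply Theorem \ref{bd:4.2} to the translated map $y\mapsto v(x_\ell+y)$ at each center $x_\ell$.

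For this, the hypotheses of Theorem \ref{bd:4.2} must hold at $x_\ell$ with constants uniform in $\ell$. The gradient bound $M$ is inherited directly. The frequency bound is inherited in the form $\Ord_{\phi,v}(x_\ell,R_0')\le C(\Lambda+1)$, using Lemma \ref{ineq:ord} with a slightly smaller radius $R_0'$, and this is available since $x_\ell\in B_{R_0/64}(0)$.

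I also need the analogue of $R_0$, and the other constants $A$, $R_1$ and the splitting radius, at the new center. These depend only on $n$, $\sigma_j$, the constants of \cite[(29)--(33)]{dm} and the Lipschitz constant, and the estimates of Section \ref{section:smoothedorder} hold for all $x\in B_{\sigma_j/2}(0)$. So after shrinking $\sigma_{\ref{bd:4.1}}$ to a uniform value, $\eta_1$ and $\sigma_{\ref{bd:4.1}}$ are independent of $\ell$.

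A subtle point is whether $\calS^k_{0,\eta}$ is intrinsic under this change of base point. Its definition refers to scales $s\in(0,1]$ and to $(\eta,s,k+1)$-homogeneity centered at the point itself, so it does not depend on the base point, up to a harmless truncation of scales. If needed, I handle this by noting that non-homogeneity for all $s\in(0,1]$ implies non-homogeneity for all $s$ in any smaller range.

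With these uniform constants, Theorem \ref{bd:4.2} shows that each $\calS^k_{0,1/m}(v)\cap B_{\sigma_{\ref{bd:4.1}}}(x_\ell)$ is $k$-rectifiable. A countable union over $m$ and $\ell$ of $k$-rectifiable sets is countably $k$-rectifiable, which proves the corollary.

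The main obstacle is checking this uniformity of constants under recentering, in particular that the set defined at $x_\ell$ agrees with $\calS^k_{0,\eta}(v)$ restricted to the ball. The remaining steps are formal.
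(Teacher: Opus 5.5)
Your proposal is correct and follows essentially the paper's own argument. The paper also uses Lemma \ref{ineq:ord} to get the uniform bound $\Ord_{\phi,v}(x,R_0/16)\le C(\Lambda+1)$ for $x\in B_{R_0/64}(0)$, recenters with splitting radius $\sigma_j-R_0/64$ so that $\eta_1$ and $\sigma_{\ref{bd:4.1}}$ become uniform, applies Theorem \ref{bd:4.2} on countably many small balls, and concludes via Lemma \ref{bd3.7} with $\eta=1/i$. The only differences are minor: the paper centers its balls at points of $\calS^k_{0,\eta}(v)$ rather than at a net of $B_{R_0/64}(0)$, and your explicit check that $\calS^k_{0,\eta}$ is unaffected by recentering spells out a point the paper leaves implicit.
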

\begin{proof}
    By Lemma \ref{ineq:ord}, we have for any $x\in B_{R_0/64}(0)$ that $\Ord_{\phi,v}(x,R_0/16)\leq C(\Lambda+1):=\Lambda'$ for all $x\in B_{R_0/64}(0)$. Define $\eta_2=\eta_1(n,\sigma_j-R_0/64,Y_C,\Lambda',M)$, and for any $0<\eta<\eta_2$, let $\sigma_{\eta}=\sigma_{\ref{bd:4.1}}(\eta,n,\sigma_j-R_0/64,M,\Lambda')$. Then, applying Theorem \ref{bd:4.2}, for any $0<\eta\leq \eta_2$ and any $x\in S_{0,\eta}^k(v)$, the set $S_{0,\eta}^k(u)\cap B_{\sigma_{\eta}}(x)$ is $k$-rectifiable. Covering $S_{0,\eta}^k(v)\cap B_{R_0/64}(0)$ by countably many such balls, we see that $S_{0,\eta}^k(v)\cap B_{R_0/64}(0)$ is countably $k$-rectifiable. It then follows from Lemma \ref{bd3.7} that $\calS_0^k(v)\cap B_{R_0/64}(0)=B_{R_0/64}(0)\cap \cup_{i=1}^{\infty}\calS_{0,1/i}^k(v)$ is countably $k$-rectifiable.
\end{proof}
\subsection{Proof of Main Theorem} Consequently, we prove Theorem \ref{thm:mainrect} by applying Corollary \ref{s0krect}.
\begin{proof}[Proof of Theorem \ref{thm:mainrect}]
    It clearly suffices to show that $\calS_j^k(u)$ is $k$-rectifiable for each $j\in\{0,1,\ldots,N-1\}$. Fix $j\in \{0,1,\ldots,N-1\}$ and let $x\in \calS_j^k(u)$. Consider splitting data $(\sigma_j(x),V_x,v_x,j,Y_x)$ for $u$ at $x$. It follows from Corollary \ref{s0krect} that there exists $R(x)>0$ such that $\calS_0^k(v_x)\cap B_{R(x)}(x)$ is $k$-rectifiable. It follows from the definitions that $\calS_j^k(u)\cap B_{R(x)}(x)\subset \calS_0^k(v_x)\cap B_{R(x)}(x)$, since for any $y\in \calS_j^k(u)\cap B_{R(x)}(x)$, the splitting data $(\sigma_j(x)-|x-y|,V_x,v_x,j,Y_x)$ has optimal dimension, and so $\sigma_j(x)-|x-y|\leq \sigma_j(y)$, which implies $y\in \calS_0^k(v_x)$. Covering $\calS_j^k(u)$ by countably many balls $B_{\sigma_j(x)}(x)$, since each $\calS_0^k(v_x)\cap B_{R(x)}(x)$ is $k$-rectifiable, so is $\calS_j^k(u)$. It follows that $\calS^k(u)$ is $k$-rectifiable.
\end{proof}
\section{Appendix}\label{sec:appendix}
The covering arguments below basically follow the proofs in \cite{bd} with further modifications with respect to the local splitting data of $u=(V,v)$ and the scales derived from the previous lemmas.
\begin{theorem}[Theorem 3.4 in \cite{nv1}]\label{bd:A1}
There exist $\delta_R(n)>0$ and $C_R(n)>0$ such that the following argument holds.
For any $R>0$, let $\{B_{r_j/5}(x_j)\}_{x_j\in S}\subset B_{3R}(0)$ be mutually disjoint
with $x_j\in B_R(0)$, and let
\[
\mu=\sum_j \omega_k r_j^k\delta_{x_j}.
\]
If for every $B_r(y)\subset B_{2R}(0)$ we have
\[
\int_{B_r(y)}\left(\int_0^r D_\mu^k(z,s)\frac{ds}{s}\right)\,d\mu(z) \leq \delta_R^2 r^k,
\]
then
\[
\mu(B_R(0))=\omega_k\sum_{i \in I}r_i^k\le C_R(n)R^k.
\]
\end{theorem}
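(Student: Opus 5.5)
This is Naber--Valtorta's discrete Reifenberg theorem for packing measures, and I would follow their proof essentially verbatim. It uses only Euclidean geometry of the atomic measure $\mu$ and no property of harmonic maps. The plan is an induction on scales. For $r\le 2R$ and $y\in B_R(0)$, set
\[
\mu_r:=\sum_{r_j\le r}\omega_k r_j^k\delta_{x_j}.
\]
The inductive claim is that $\mu_r(B_r(y))\le C_R(n) r^k$ for all such $y$ and $r$. The base case holds because when $r$ is smaller than every $r_j$ attached to centers near $y$, disjointness of the balls $B_{r_j/5}(x_j)$ forces at most one atom to contribute, and that atom has weight at most $\omega_k r^k$.

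For the inductive step, assume the bound at all scales $\le r/2$ and prove it at scale $r$ with a constant $C_R$ chosen independently of the step. I would proceed in three stages.

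\textbf{Rough bound.} Cover $B_r(y)$ by $C(n)$ balls of radius $r/2$. This gives $\mu_r(B_r(y))\le C(n)C_R r^k$, which is finite but too weak to close the induction.

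\textbf{Approximating plane.} Use the hypothesis $\int_{B_r(y)}\int_0^r D^k_\mu\,\frac{ds}{s}\,d\mu\le\delta_R^2r^k$. By Chebyshev, for most points $z$ in the support and at most scales, the flatness $D^k_\mu(z,s)$ is small. At each scale $s=2^{-\ell}r$ this produces a best affine plane $L_{z,s}$. Through the barycenter/eigenvalue characterization of $D^k_\mu$, the mean flatness controls the tilt between neighboring planes: $d_H(L_{z,s}\cap B_s, L_{z',s'}\cap B_s)^2\lesssim s^2\big(D(z,2s)+D(z',2s)\big)$, using the rough density bound to normalize.

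\textbf{Reifenberg map.} Build a bi-Lipschitz-type map $\sigma:L\to\RR^n$ by gluing the local projections onto the planes $L_{z,s}$ with a partition of unity subordinate to a Whitney-type cover. The $C^1$-defect of $\sigma$ at scale $s$ is bounded by the square-root sum of the flatnesses, which is $\le C\delta_R$. The image $T=\sigma(L)$ is then a $k$-dimensional graph-like manifold with $\calH^k(T\cap B_r(y))\le(1+C\delta_R)\omega_k r^k$. Points where flatness fails are put into an exceptional set. By Chebyshev together with the rough bound, that set has $\mu$-measure at most $C\delta_R C_R r^k$.

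\textbf{Closing the induction.} The disjointness of the balls $B_{r_j/5}(x_j)$ and their closeness to $T$ give packing:
\[
\sum_j r_j^k\lesssim \calH^k\Big(T\cap \textstyle\bigcup_j B_{r_j/5}(x_j)\Big).
\]
Hence $\mu_r(B_r(y))\le C(n)r^k+C\delta_R C_R r^k$. Taking $C_R=2C(n)$ and $\delta_R$ small gives $\mu_r(B_r(y))\le C_R r^k$, which closes the induction.

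The main obstacle is the second and third stages: showing that squared-summable flatness yields a map with uniformly controlled Lipschitz constant and area. This requires a careful tilt estimate between best planes at comparable scales and locations, with constants independent of $C_R$, so that the bootstrap closes. I would follow \cite[Sections 3--5]{nv1} directly for this estimate rather than reprove it.
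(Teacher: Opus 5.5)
Your proposal takes essentially the same route as the paper. The paper gives no argument for this statement: it imports Naber--Valtorta's discrete Reifenberg theorem \cite[Theorem 3.4]{nv1}, in a rescaled form with the balls $B_{r_j/5}(x_j)$ disjoint, which follows from the normalized version after rescaling and replacing $r_j$ by $r_j/5$ at the cost of dimensional constants. Deferring the Reifenberg construction to \cite{nv1}, as you do, is therefore exactly what the paper does.

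If you ever write out your sketch, two points need fixing. First, the tilt estimate between best planes needs a lower, non-degeneracy bound on $\mu$, meaning the mass is not concentrated near any $(k-1)$-plane; the rough upper density bound alone does not give this. It comes from a lower density bound combined with the inductive upper bound, which is why one stops refining at low-density balls. Second, the base case is simpler than you describe. After reducing to finitely many balls, $\mu_r\equiv 0$ below the smallest radius, and just above it a bounded number $C(n)$ of atoms contribute, not at most one. The reduction is legitimate because the hypothesis is monotone under passing to sub-collections.
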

\begin{proposition}\label{bd:A2}
    For a harmonic map $u$ into DM-complex $X,$ let $u=(V,v):B_{\sigma_j}(0)\to \mathbb{R}^j\times Y_C^{N-j}$ be a local representation of a harmonic map, where $Y_C^{N-j}$ is a conical $F$-connected complex satisfying $\Ord_{\phi,v}(0,R_0)\le \Lambda \text{ and } \sup_{B_{\sigma_j}(0)}|\nabla u| \leq M.$ For $0<\rho\le \frac1{256}$ and every $0<\eta<\eta_1$, where $\eta_1$ is chosen as in Lemma \ref{bd5.12}, there exist $\delta=\delta(n,\rho,\eta,Y_C, \Lambda)>0, C=C(n) \text{ and } \sigma\in(0,\min\{\frac{\sigma_j}{16},\frac{4}{5\rho},
\sigma_{\ref{bd5.8}, \ref{bd5.10}, \ref{bd5.13}, \ref{bd5.14}, \ref{toolformainflatness}, \ref{bd6.2}}\}]$ so that the following holds. 

For any $x \in B_{\sigma/8}(0),$ let $s,S$ satisfy $0<s<S \leq \frac{\sigma}{8}, \, D \subset \calS^k_{0,\eta,\delta s}(v)\cap B_S(x),$
and $\Gamma:=\sup_{y\in D}\Ord_{\phi,v}(y,8S).$ Then there is a covering $\{B_{r_i}(x_i)\}_{i\in I}$ of $D$ by finitely many balls such that
\begin{enumerate}
\item [(1)] $r_i\ge 10\rho s;$
\item [(2)] $\sum_i r_i^k \leq CS^k;$
\item [(3)] for each $i$, either $r_i\le s$, or the set
\[
F_i:=D\cap B_{r_i}(x_i)\cap
\{y:\Ord_{\phi,v}(y,\rho r_i)>\Gamma-\delta\}
\]
is contained in $B_{r_i}(x_i)\cap B_{\rho r_i}(L_i)$ for a $(k-1)$-dimensional affine subspace $L_i$.
\end{enumerate}
\end{proposition}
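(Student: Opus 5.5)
This is the analogue of \cite[Proposition 7.2]{bd} (itself modelled on \cite{nv1} and \cite{dmsv}). I would build the cover by a stopping-time argument at dyadic-type scales $r_j:=\rho^j S$, stopping once $r_j\le s$ (so every radius satisfies $r_i\ge 10\rho s$ after a harmless enlargement). The packing bound (2) would come from Theorem \ref{bd:A1}, with the mean-flatness hypothesis verified through Lemma \ref{bd6.2}.

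\emph{Construction.} Say a ball $B_r(y)$ with $y\in D$ is \emph{good} if the set $\{z\in D\cap B_r(y):\Ord_{\phi,v}(z,\rho r)>\Gamma-\delta\}$ $\rho r$-effectively spans a $k$-dimensional affine subspace, and \emph{bad} otherwise. A bad ball automatically satisfies alternative (3): its high-frequency points lie in $B_{\rho r}(L)$ for some $(k-1)$-plane $L$. So bad balls are declared final and we stop refining them. Each good ball at scale $r_j$ is recovered by balls of radius $r_{j+1}$ centered at points of $D$, and we iterate. Balls that reach scale $\le s$ are also final.

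\emph{Using the good balls.} In a good ball $B_{r}(y)$, Lemma \ref{bd5.10} (with $\epsilon$ chosen small relative to $\delta_{\ref{bd5.8}}$, $\delta_{\ref{bd5.13}}$, $\delta_{\ref{bd5.14}}$, $\delta_{\ref{toolformainflatness}}$) gives that every point near the spanned plane $V$ has frequency close to $\Gamma$. Since $\Gamma=\sup_D \Ord_{\phi,v}(\cdot,8S)$ and $\Ord_{\phi,v}$ is monotone (Lemma \ref{monotoneorder}), the pinching $W^{4r}_{\rho r}$ is then smaller than $\delta$ at those points. Lemma \ref{bd5.8} then places $D\cap B_{2r}(y)\subset\calS^k_{0,\eta,\delta s}$ inside $B_{2\rho r}(V)$, so the next-generation centers lie in a thin slab around $V$. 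This gives a volume-type count of at most $C(n)\rho^{-k}$ children per good ball. Iterating this naively would compound the constant, so instead we prune: keep a maximal Vitali subfamily of final balls $\{B_{r_i/5}(x_i)\}$ and form $\mu=\sum\omega_k r_i^k\delta_{x_i}$.

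\emph{Packing via mean flatness.} To apply Theorem \ref{bd:A1} we need $\int_{B_r(y)}\int_0^r D^k_\mu(z,t)\frac{dt}{t}\,d\mu(z)\le\delta_R^2 r^k$. At each center $z$ and scale $t\ge r_i$, the point $z$ lies in $\calS^k_{0,\eta,\delta s}$, so $v$ is not $(\eta,t,k+1)$-homogeneous. Lemma \ref{bd5.13} and Lemma \ref{bd5.14} transfer this non-homogeneity to nearby pinched points with constant $\theta\eta$, and $O_Y\in v(B_{2t}(z))$ holds because $v(z)=O_Y$ at singular points. Hence Lemma \ref{bd6.2} applies and gives
\[
D^k_\mu(z,t/8)\le C t^{-k}\Big(\int_{B_{t/8}(z)}W^{4t}_{t/8}(\xi)\,d\mu(\xi)+t\Big),
\]
after normalizing the height factor $t^{1-n}I_{\phi,v}$ using \eqref{eqn:6} and Lemma \ref{monotoneheight}. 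Integrating in $dt/t$ and swapping the order of integration, the sum over scales of $W^{4t}_{t/8}(\xi)$ telescopes. At each $\xi$ it is bounded by $C\big(\Ord_{\phi,v}(\xi,8S)-\Ord_{\phi,v}(\xi,\rho r_\xi)\big)\le C\delta$, because the frequency drops by less than $\delta$ along unstopped scales. The linear $t$ term integrates to $Cr\le C\sigma$. Taking $\delta$ and $\sigma$ small relative to $\delta_R(n)$ gives the hypothesis, so Theorem \ref{bd:A1} yields $\sum r_i^k\le C(n)S^k$, first for the Vitali subfamily and then, by bounded overlap, for the whole cover.

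\emph{Main obstacle.} The hardest step is making the telescoping bound work with the error terms coming from the asymptotically harmonic $v$. These are the additive $+t$ in Lemma \ref{bd6.2} and the $Ar^2$, $e^{Ar}$ corrections in $\Ord_{\phi,v}$. They must be summable over dyadic scales, which they are since $\sum_j \rho^j S\le CS$. The standard \cite{nv1} trick of running the argument by induction on scales, proving the packing bound for all sub-balls with a uniform constant, handles the fact that Theorem \ref{bd:A1} needs the flatness estimate on every $B_r(y)\subset B_{2R}$ rather than only at the top scale.
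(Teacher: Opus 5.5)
Your architecture matches the scheme of \cite{bd} that the paper defers to: good and bad balls according to whether the high-frequency part of $D$ effectively spans a $k$-plane, stopping at scale $s$, and a packing bound via Theorem~\ref{bd:A1}, Lemma~\ref{bd6.2} and a telescoping of the pinching. However, the step that feeds Lemma~\ref{bd6.2} and the telescoping fails as written. You re-cover each good ball by balls centered at points of $D$, and you assert that Lemma~\ref{bd5.10} gives frequency close to $\Gamma$ at every point near $V$. It does not. Lemma~\ref{bd5.10} bounds $\Ord_{\phi,v}(x,2\sigma\rho^2)$ from below only for $x\in V$. Lemma~\ref{bd5.8} only places $D$ inside $B_{2\rho r}(V)$, at a distance comparable to the scale $\rho r$ where the pinching is needed. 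Nothing in the paper transports a frequency lower bound across such a distance: Lemma~\ref{comparefreq} only controls points on a segment whose endpoints are already pinched, and even then only to order $(W+r)^{1/2}$. Consequently, at a center $\xi\in D$ you have neither the hypothesis $W^{2t}_{\rho t}(\xi,v)<\delta_{\ref{toolformainflatness}}$ of Lemma~\ref{bd6.2}, nor the bound $\Ord_{\phi,v}(\xi,8S)-\Ord_{\phi,v}(\xi,\rho r_\xi)<\delta$ on which your telescoping rests.

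A telling symptom is that Lemmas~\ref{bd5.13} and~\ref{bd5.14} do no work in your construction. For centers in $D\subset\calS^k_{0,\eta,\delta s}(v)$, non-homogeneity is automatic and there is nothing to translate; those lemmas are designed for the other choice of centers. Place the new centers on $V$, covering $D\cap B_r\subset B_{2\rho r}(V)$ by $C(n)\rho^{-k}$ balls of radius comparable to $\rho r$ centered on $V$, so that pinching comes directly from Lemma~\ref{bd5.10}. These centers are not known to lie in the stratum. Lemma~\ref{bd5.13}, whose hypothesis is exactly a pinched point $y\in V$, supplies non-$(\theta\eta,t,k+1)$-homogeneity for $t\in[2\rho\sigma,2\sigma]$. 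Lemma~\ref{bd5.14}, with a point of an ancestor's spanning set $F\subset D$ as the non-homogeneous partner, supplies it at larger scales without compounding $\theta$. Lemma~\ref{bd6.2} is then applied with $\theta\eta$ in place of $\eta$. Alternatively, you could keep centers in $D$, but only after proving a new compactness lemma, merging the arguments of Lemmas~\ref{bd5.8} and~\ref{bd5.10}, that every point of $D$ in a good ball has $\Ord_{\phi,v}(\cdot,\rho^2 r)>\Gamma-\epsilon$.

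Separately, pruning the final family to a Vitali subfamily afterwards does not control the whole cover. Final balls of different radii have no bounded overlap, and enlarging the Vitali balls destroys property (3). The disjointness of the $B_{r_i/5}(x_i)$ has to be built into the construction, with new centers separated from one another and lying outside previously stopped balls, so that $\mu$ is formed from the entire final family.
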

\begin{proof}
    The proof is analogous to the proof of \cite[Proposition A.2]{bd}.
\end{proof}

\begin{theorem}\label{bd:7.1}
    Let $u=(V,v):B_{\sigma_j}(0) \to \mathbb{R}^j \times Y_C^{N-j}$ be a local representation of a harmonic map into a conical DM-complex $Y_C^N$ satisfying $\Ord_{\phi,v}(0,R_0)\leq \Lambda$ and $\sup_{B_{\sigma_j}(0)}|\nabla u| \leq M.$ Given $k\in\{0,...,n-1\}$ and $0<\eta<\eta_1$ where $\eta_1$ is chosen as in Lemma \ref{bd5.12}, there exist $\delta=\delta(n,\eta,Y_C,\Lambda)>0, \, C=C(n)>0,$ and $\sigma\in (0,\sigma_{\ref{bd:A2}}]$ such that the following holds.

    Let $0<s<S<\frac{\sigma}{8}$ and $x \in B_{\sigma}(0)$. Suppose that $D \subseteq \calS_{0,\eta,\delta s}^k \cap B_S(x)$ is any subset and $\Gamma:=\Ord_{\phi,v}(y,8S).$ Then there is a finite decomposition of $D$ into sets $D_x,$ a finite set $\calC \subseteq B_{2\sigma}(0),$ and a collection of balls $\{B_{s_x}(x)\}_{x \in \calC}$ with $s_x \geq s,$ such that:
    \begin{itemize}
        \item [(1)]$D_x \subseteq B_{s_x}(x),$
        \item [(2)]$\sum s_x^k \leq CS^k,$
        \item [(3)]for each $x \in \calC,$ either $s_x=s$ or for any $y \in D_x,$
        \begin{equation*}
            \Ord_{\phi,v}(y,8s_x) \leq \Gamma-\delta.
        \end{equation*}
    \end{itemize}
\end{theorem}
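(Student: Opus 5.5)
Fix $\rho\le \frac1{256}$ depending only on $n$ (chosen so that $8\rho^{-1}$ scale bookkeeping works, e.g. $\rho=\frac1{256}$), take $\delta$ and $\sigma$ as in Proposition \ref{bd:A2} for this $\rho$, and iterate Proposition \ref{bd:A2} inside each ball until the balls either reach scale $s$ or every point of the piece of $D$ they carry has had its frequency drop by $\delta$. This mirrors the proof of \cite[Theorem 7.1]{bd}, and before that \cite{nv1}.

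First apply Proposition \ref{bd:A2} to $D\subset B_S(x)$ and obtain the cover $\{B_{r_i}(x_i)\}$ with $\sum r_i^k\le C(n)S^k$ and $r_i\ge 10\rho s$.

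For each ball with $r_i\le s$, replace it by a ball of radius exactly $s$. This costs at most a factor $(10\rho)^{-k}$ in the count, which is why it is safer to replace by $O(\rho^{-n})$ balls of radius $s$ covering it; either way the constant depends only on $n$, since $\rho$ does.

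For each ball with $r_i>s$, split $D\cap B_{r_i}(x_i)$ into two parts:
\begin{itemize}
\item the points with $\Ord_{\phi,v}(y,\rho r_i)\le\Gamma-\delta$. Cover these by balls of radius $\rho r_i/8$. Since $\Ord_{\phi,v}(y,8\cdot\rho r_i/8)\le \Gamma-\delta$, these balls satisfy alternative (3), and there are $C(n,\rho)$ of them per ball.
\item the set $F_i$, which lies in $B_{\rho r_i}(L_i)$ with $L_i$ of dimension $k-1$. Cover it by $\lesssim \rho^{1-k}$ balls of radius $\rho r_i$, whose total $k$-content is $\lesssim \rho\, r_i^k$.
\end{itemize}

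Apply the construction again to each ball from the $F_i$ part. The term $\Gamma$ stays valid because monotonicity (Lemma \ref{monotoneorder}) gives $\sup_{D\cap B}\Ord_{\phi,v}(\cdot,8\rho r_i)\le\Gamma$. The geometric gain $C(n)\rho<1/2$ (choosing $\rho$ small) makes the total sum over all generations a geometric series bounded by $C(n)S^k$. The recursion terminates in finitely many steps because radii shrink by the factor $\rho$ and stop at $s$. Points of $D$ always remain covered, and we take $D_x$ to be the piece of $D$ assigned to each final ball.

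Main obstacle: the first bullet produces $C(n,\rho)$ balls per ball, and these are not summable against $r_i^k$ without a packing estimate. The fix is to use property (2) of Proposition \ref{bd:A2} only on the pinched set and to carry the unpinched points along inside the same covering. This matches how \cite[Proposition A.2]{bd} is really applied: the balls with $r_i>s$ already carry the drop for all non-$F_i$ points at scale $8r_i$. So I would verify that the $\delta$-drop can be stated at scale $8s_x$ with $s_x=r_i$, using Lemma \ref{bd5.10} with $\epsilon=\delta$ adjusted. This is the step to check carefully.

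The dependence of $\delta$ only on $(n,\eta,Y_C,\Lambda)$ comes from fixing $\rho=\rho(n)$. The condition $\mathcal C\subset B_{2\sigma}(0)$ holds since every center lies within $S+r_i\le \sigma/4$ of $x$.
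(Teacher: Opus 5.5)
Your bulleted construction is essentially the standard iteration behind \cite[Theorem 7.1]{bd}, which is all the paper's proof cites, and it already closes. The problem is the ``main obstacle'' paragraph: it diagnoses an obstacle that does not exist, and the repair it proposes is false.

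\textbf{The obstacle is not real.} The balls of radius $\rho r_i/8$ around the unpinched points are terminal and are never refined again. When $\rho r_i/8<s$, use radius $s$ instead. Then $r_i<8s/\rho$, so the count is still $C(n)\rho^{-n}$. Hence each bad ball $B$ of radius $r_B>s$, in any generation, contributes terminal $k$-content at most $C(n)\rho^{-n}r_B^k$. Let $b_g$ be the $k$-content of the bad balls in generation $g$. Then $b_0\le C_{\ref{bd:A2}}S^k$ and $b_{g+1}\le C_{\ref{bd:A2}}\,C(n)\rho\, b_g$. The factor $C_{\ref{bd:A2}}$ enters because you re-apply Proposition~\ref{bd:A2} to each of the $\lesssim\rho^{1-k}$ balls covering $F_i$, so your ``$C(n)\rho<1/2$'' must really be $C_{\ref{bd:A2}}C(n)\rho\le 1/2$. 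That is harmless, since Proposition~\ref{bd:A2} gives a constant $C=C(n)$ independent of $\rho$. With $\rho=\rho(n)$ chosen this way, the total terminal content is at most $C(n)\rho^{-n}\sum_g b_g\le C(n)S^k$. Estimate (2) of the Proposition already is the packing estimate, and nothing more is needed.

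\textbf{The proposed fix fails.} A point $y\in D\cap B_{r_i}(x_i)\setminus F_i$ is only known to satisfy $\Ord_{\phi,v}(y,\rho r_i)\le\Gamma-\delta$. Since $r\mapsto\Ord_{\phi,v}(y,r)$ is nondecreasing (Lemma~\ref{monotoneorder}), this gives no upper bound on $\Ord_{\phi,v}(y,8r_i)$. That quantity can be close to $\Gamma$, for instance when $r_i$ is comparable to $S$.

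Lemma~\ref{bd5.10} does not help either. It propagates a \emph{lower} bound $\Ord_{\phi,v}>\Gamma-\epsilon$ from effectively spanning pinched points to their span, at the \emph{smaller} scale $2\sigma\rho^2$. It never yields an upper bound at a larger scale.

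So assigning the unpinched points to $B_{r_i}(x_i)$ with $s_x=r_i$ would violate (3). It would also break the iteration in Theorem~\ref{bd:4.1}, which feeds $\sup_{D_x}\Ord_{\phi,v}(\cdot,8s_x)$ into the next step as the new $\Gamma$. Drop that paragraph and keep your original construction.

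A minor point: a ball with $r_i\le s$ lies in the single ball $B_s(x_i)$, at content cost $(10\rho)^{-k}$, so no $O(\rho^{-n})$ re-cover is needed there.
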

\begin{proof}
    The proof follows arguments similar to the proof of \cite[Theorem 7.1]{bd}.
\end{proof}
\bibliographystyle{amsalpha}
\bibliography{rectifiability}
\end{document}